\newtheorem{thm}{Theorem}
\newtheorem{lemma}{Lemma}
\newtheorem{prop}{Proposition}
\newtheorem{conjecture}{Conjecture}
\theoremstyle{definition}
\newtheorem{example}{Example}
\newtheorem{definition}{Definition}
\newtheorem{remark}{Remark}
\newcommand{\mat}[4]{\begin{pmatrix}{#1}&{#2}\\{#3}&{#4} \end{pmatrix}}
\newcommand{\J}{\mathcal{J}}
\newcommand{\Btwo}[1]{B_2({#1}^{*},{ #1})}
\newcommand{\Bone}[1]{B_1({#1}^{*},{ #1})}
\newcommand{\T}{\mathcal{T}}
\newcommand{\Y}{\mathcal{Y}}
\newcommand{\tw}[1]{\widetilde{#1}}
\renewcommand{\L}{\mathcal{L}}
\newcommand{\mc}[1]{\mathcal{#1}}
\renewcommand{\a}{\alpha}
\renewcommand{\b}{\beta}
\newcommand{\ip}[2]{\left \langle #1,\ #2\right \rangle}
\newcommand{\ipsq}[2]{\left [ #1, #2\right ]}
\newcommand{\hef}{h\otimes f_j\otimes f_k}
\newcommand{\heg}{h\otimes f_a\otimes f_b}
\newcommand{\gab}{g\otimes f_a\otimes f_b}
\newcommand{\hab}{h\otimes f_a\otimes f_b}
\newcommand{\hefp}{h^\prime \otimes f_{j^\prime}\otimes f_{k^\prime}}
\newcommand{\sym}{\operatorname{Sym}}
\newcommand{\ran}{\operatorname{ran}}
\newcommand{\sB}{\mathscr{B}}
\newcommand{\tay}{\operatorname{Tay}}
\newcommand{\df}[1]{{\textit{#1}}{\index{#1}}}
\newcommand{\ext}{\operatorname{Ext}}
\newcommand{\co}{\operatorname{co}}
\title[Lifting $3$-isometric Tuples]{Lifting Commuting 3-Isometric Tuples}
\author[Russo]{Benjamin Russo}
 \address{Benjamin Russo, Department of Mathematics\\
  University of Florida, Gainesville 
   }
   \email{russo5@.ufl.edu}
\subjclass[2010]{47A20,  (Primary). 47A45, 47B99, 34B24 (Secondary)}
\date{\today}
\keywords{tuples, dilation theory, $3$-symmetric operators, $3$-isometric operators, Non-normal spectral theory, Taylor spectrum, complete positivity, Wiener-Hopf factorization, multi-variable}
\begin{document}
\maketitle
\begin{abstract}
An operator $T$ is called a 3-isometry if there exists operators $B_1(T^*,T)$ and $B_2(T^*,T)$ such that 
\[Q(n)=T^{*n}T^n=1+nB_1(T^*,T)+n^2 B_2(T^*,T)\]
for all natural numbers $n$. An operator $J$ is a Jordan operator of order $2$ if $J=U+N$ where $U$ is unitary, $N$ is nilpotent order $2$, and $U$ and $N$ commute.  An easy computation shows that $J$ is a $3$-isometry and that the restriction of $J$ to an invariant subspace is also a $3$-isometry. Those $3$-isometries which are the restriction of a Jordan operator to an invariant subspace can be identified, using the theory of completely positive maps, in terms of a positivity condition on the operator pencil $Q(s).$ In this article,  we establish the analogous result in  the multi-variable setting and show, by modifying an example of Choi, that an additional hypothesis is necessary.  Lastly we discuss the joint spectrum of sub-Jordan tuples and derive results for 3-symmetric operators as a corollary. 
\end{abstract}

\section{Introduction}
Let $H$ denote a complex Hilbert space and $\sB(H)$ the bounded linear operators on $H$.
An operator  $T$ on $H$ is a \df{3-isometry} if 
\[T^{*3}T^3-3T^{*2}T^2+3T^*T-I=0.\]
Equivalently an operator $T$ is a 3-isometry  if there exist operators $\Bone{T}, \Btwo{T} \in \sB(H)$ such that, 
\begin{equation}\label{eq:3iso}
T^{*n}T^n=I+n\Bone{T}+n^2\Btwo{T}
\end{equation}
 for positive integers $n$. 
Similarly, $\T\in \sB(H)$ is a  \df{3-symmetric operator} if
\begin{equation}\label{eq:3sym}
\exp(-is\T^*)\exp(is\T)=I+s\Bone{\T}+s^2\Btwo{\T} 
\end{equation}
for some $\Bone{\T}$, $\Btwo{\T}\in \sB(H)$ and all real numbers $s$. 
 In particular, if  $\T$ is a 3-symmetric operator, then  $T=\exp(i\T)$ is a 3-isometric operator.\\

An operator $J$ is \df{$s$-Jordan (of order $2$)} if $J=S+N$, where $S$ and $N$ commute, $N$ is nilpotent order two, and $S$ is self-adjoint. A calcuation shows  $J$ is an example of a 3-symmetric operator.
 Similarly  $J$ is \df{$u$-Jordan (of order $2$)} if $J=U+N$, where $U$ and $N$ commmute, $U$ is unitary, and $N$ is nilpotent of order two.  One can check that $u$-Jordan operators are 3-isometric and if $J$ is an $s$-Jordan operator, then $\exp(iJ)$ is $u$-Jordan. For the remainder of the paper we will refer to $u$-Jordan and $s$-Jordan operators as simply Jordan when it is clear from context which type is being discussed.
 
  An operator $T$ on a Hilbert space $H$ has an \df{extension} or \df{lifts} to an operator $J$ on a Hilbert space 
  if there is an isometry $V:H\to K$ such that $VT=JV$.  If $J$ is 3-isometric (resp. 3-symmetric) and $T$ lifts to 
  $J$, then $T$ is 3-isometric (resp. 3-symmetric) since, in that case,
\[
  T^{*n}T^n = V^* J^{*n}J^nV
\]
 and the right hand side is quadratic in $n$.

\begin{thm}\label{3symlift}
$\T \in \sB(H)$ is a 3-symmetric operator if and only if $\T$ has an extension to an operator of the form 
\[\J=\mat{A}{\lambda 1}{0}{A}\]
where $A$ is self-adjoint and $\lambda \in \mathbb{C}.$
\end{thm}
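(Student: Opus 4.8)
The plan is to dispatch the \emph{if} direction by inspection and to obtain the \emph{only if} direction from a Stinespring dilation of a completely positive map. For \emph{if}: write $\J=\mat{A}{0}{0}{A}+\mat{0}{\lambda 1}{0}{0}$; the first summand is self-adjoint, the second is nilpotent of order two, and they commute because $A$ commutes with the scalar $\lambda$, so $\J$ is $s$-Jordan and hence $3$-symmetric by the computation recalled above. If $\T$ lifts to $\J$ through an isometry $V$, then $\exp(-is\T^{*})\exp(is\T)=V^{*}\exp(-is\J^{*})\exp(is\J)V$ is quadratic in $s$, so $\T$ is $3$-symmetric.

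For \emph{only if}, let $\T$ be $3$-symmetric and set $Q(s)=\exp(-is\T^{*})\exp(is\T)=I+s\Bone{\T}+s^{2}\Btwo{\T}$. First I record two facts that come for free. From $s^{-2}Q(s)\ge 0$ and $s\to\infty$ we get $\Btwo{\T}\ge 0$. Since $Q$, being a polynomial of degree $\le 2$, has all higher Taylor coefficients zero, the $s^{3}$-coefficient gives $\T^{*3}-3\T^{*2}\T+3\T^{*}\T^{2}-\T^{3}=0$, which rearranges to $\Btwo{\T}\T=\T^{*}\Btwo{\T}$, and the $s^{m}$-coefficients for $m\ge 4$ give the higher relations $\sum_{k}(-1)^{k}\binom{m}{k}\T^{*k}\T^{m-k}=0$. (If $\Btwo{\T}=0$ then $Q\equiv I$, so $\exp(is\T)$ is unitary for all real $s$, $\T$ is self-adjoint, and $\J=\mat{\T}{0}{0}{\T}$ works; so I may assume $\Btwo{\T}\ne 0$ and fix $\lambda>0$ with $\lambda^{2}\ge\|\Btwo{\T}\|$.)

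Now I recast the problem. Since the spectrum of a $3$-symmetric operator is real, put $\sigma=[-\|\T\|,\|\T\|]$, work inside $M_{2}(C(\sigma))\cong C(\sigma)\otimes M_{2}$, and single out the element $\mathbf{j}=xI_{2}+\lambda E_{12}$. Any $*$-representation $\pi$ of $M_{2}(C(\sigma))$ has the form $\rho\otimes\mathrm{id}_{M_{2}}$, and with $A:=\rho(x)$ self-adjoint one has $\pi(\mathbf{j})=\mat{A}{\lambda 1}{0}{A}$ --- exactly the shape demanded by the theorem. So it suffices to produce a unital completely positive $\phi\colon M_{2}(C(\sigma))\to\sB(H)$ with $\phi\bigl(p(\mathbf{j})^{*}q(\mathbf{j})\bigr)=p(\T)^{*}q(\T)$ for all polynomials $p,q$ (note $p(\mathbf{j})=\mat{p}{\lambda p'}{0}{p}$, evaluated at $x$): then Arveson's extension theorem and Stinespring's theorem give $\phi(\cdot)=V^{*}\pi(\cdot)V$ with $V$ an isometry, the equalities $\phi\bigl(q(\mathbf{j})^{*}q(\mathbf{j})\bigr)=\phi\bigl(q(\mathbf{j})\bigr)^{*}\phi\bigl(q(\mathbf{j})\bigr)$ force $\operatorname{ran}V$ to be invariant under each $\pi(q(\mathbf{j}))$ (a standard multiplicative-domain / Cauchy--Schwarz argument), and compressing $\pi(\mathbf{j})=\J$ to $\operatorname{ran}V$ recovers $\T$.

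Constructing $\phi$, and above all proving it completely positive, is the heart of the matter and the step I expect to cost the most. Well-definedness of $\phi$ on the operator system spanned by the moments $p(\mathbf{j})^{*}q(\mathbf{j})$ hinges precisely on the linear relations among those moments in $M_{2}(C(\sigma))$ --- which are exactly the identities $\sum_{k}(-1)^{k}\binom{m}{k}(\cdot)^{*k}(\cdot)^{m-k}=0$, $m\ge 3$, and these hold for $\T$ by $3$-symmetry. Complete positivity is where the degree-two hypothesis is used decisively: the positive cone of the relevant operator system is governed by a Wiener--Hopf (spectral) factorization of the pencil $Q(s)=\exp(is\T)^{*}\exp(is\T)$, namely $Q(s)=(\alpha+s\beta)^{*}(\alpha+s\beta)$ with $\alpha^{*}\alpha=I$, $\beta^{*}\beta=\Btwo{\T}$, $\alpha^{*}\beta+\beta^{*}\alpha=\Bone{\T}$, obtained by factoring $Q(s)+\varepsilon s^{2}I$ and letting $\varepsilon\to 0$; this factorization is what lets one put positive elements in a form on which $\phi$ is visibly positive, ultimately reducing positivity of $\phi$ to inequalities of the type $\phi\bigl(R(\mathbf{j})^{*}R(\mathbf{j})\bigr)=R(\T)^{*}R(\T)\ge 0$. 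The factorization depends on $Q(s)\ge 0$ for \emph{all} real $s$, which for a $3$-symmetric operator is automatic because $Q(s)=\exp(is\T)^{*}\exp(is\T)$ --- this is exactly why no extra hypothesis is needed here, whereas for $3$-isometric tuples (and in the multivariable setting generally) positivity of the pencil on $\mathbb{R}$ is a genuine extra assumption, and, by the Choi-type example, still not sufficient. Once $\phi$ is in hand, Arveson, Stinespring, and the multiplicative-domain argument deliver $A$, $\lambda$, and the isometry realizing $\T$ as the restriction of $\J=\mat{A}{\lambda 1}{0}{A}$ to an invariant subspace.
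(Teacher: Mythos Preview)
The paper does not prove Theorem~\ref{3symlift} at all: it simply attributes the result to Agler's thesis (with a preliminary version due to Helton), and later remarks that the theorem can be \emph{recovered} from the $3$-isometric lifting theorem (Theorem~\ref{3isothm}) by a functional calculus argument---this reduction is carried out explicitly in the multivariable setting in Section~\ref{sec:3sym}. So there is no ``paper's own proof'' to compare against, only this indirect route: exponentiate $\T$ to a $3$-isometry $T=e^{i\T}$, check that $T\in\mathfrak{F}_c$ for large $c$ (using $e^{it\T^*}B_2e^{it\T}=B_2$), apply Theorem~\ref{3isothm} to lift $T$ to a $u$-Jordan $J$ with $\sigma(J)=\sigma(T)$, and then take a holomorphic logarithm to produce the $s$-Jordan extension of $\T$.

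Your proposal follows instead the \emph{direct} Agler line: build a completely positive map on the operator system generated by $p(\mathbf j)^*q(\mathbf j)$ inside $M_2(C(\sigma))$ and invoke Arveson extension plus Stinespring. This is a genuinely different route from the paper's functional-calculus reduction; in particular, your argument never passes through the $3$-isometric theorem, whereas the paper's derivation relies on it entirely. Both routes ultimately consume the single-variable operator Fej\'er--Riesz/spectral factorization (you use it to factor $Q(s)$; the paper uses it inside McCullough's proof of Theorem~\ref{3isothm}), so the essential analytic input is the same, just packaged differently.

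Your outline is in the right spirit, but two steps are only asserted, and they are the hard ones. First, well-definedness of $\phi$ requires knowing \emph{all} linear relations among the $p(\mathbf j)^*q(\mathbf j)$ in $M_2(C(\sigma))$, not just the hereditary identities $\sum_k(-1)^k\binom{m}{k}x^{*k}x^{m-k}=0$; you would need to argue that these generate the kernel (Agler handles this via the symmetrization trick, the analogue of Lemma~\ref{twiddlemaplemma} here). Second, the passage from the factorization $Q(s)=(\alpha+s\beta)^*(\alpha+s\beta)$ to complete positivity of $\phi$ is exactly the content of Lemma~\ref{factorgivespos} in the single-variable case, and it is not a one-liner---your sentence ``this factorization is what lets one put positive elements in a form on which $\phi$ is visibly positive'' is a correct summary but not a proof. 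Finally, the $\varepsilon\to0$ limiting argument for the factorization needs a compactness or uniform-boundedness step to extract convergent $\alpha,\beta$; this is standard but should be acknowledged.
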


Agler established Theorem \ref{3symlift} in the general case in \cite{Aglerthesis}. A preliminary version of the result was initially proven by Helton in \cite{Helton_2}.

The notation $A\succeq 0$ indicates that the operator $A$ on Hilbert space is positive semidefinite.
Given $c>0$,  let $\mathfrak{F}_c$ denote the class of  3-isometric operators $T$ such that
\[\hat{Q}(T,s):=I+s\Bone{T}+s^2\Btwo{T}-\frac{1}{c^2}\Btwo{T}\succeq0 \]
for all $s\in  \mathbb{R}$.
\begin{thm}\cite{McCullough}[3-isometric lifting theorem]\label{3isothm}
An operator $T$ on a Hilbert space $H$ is in the class $\mathfrak{F}_c$ if and only if there is
 a unitary operator $U$ on a Hilbert space $K$  and an isometry $V:H\to K\oplus K$ such that
  $VT=JV$, where 
\[J=\mat{U}{cU}{0}{U}.\] 
Moreover, if $T$ is invertible, then,  $VT^{-1}=J^{-1}V,$ the spectrum of $T$ is a subset of the unit circle, and $U$ can be chosen so that $\sigma(T)=\sigma(U)=\sigma(J)$.
\end{thm}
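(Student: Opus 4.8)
The plan is to prove the equivalence directly: the forward implication is a short computation and the substance is the converse, which I would handle through completely positive maps as advertised. \emph{Forward direction.} From $J^n=\mat{U^n}{ncU^n}{0}{U^n}$ one gets $J^{*n}J^n=\mat{I}{ncI}{ncI}{(n^2c^2+1)I}$, hence $\Bone{J}=\mat{0}{cI}{cI}{0}$, $\Btwo{J}=\mat{0}{0}{0}{c^2I}$, and $\hat{Q}(J,s)=\mat{I}{scI}{scI}{s^2c^2I}=\vect{I}{scI}\begin{pmatrix}I & scI\end{pmatrix}\succeq 0$. Since $T^{*n}T^n=V^*J^{*n}J^nV$ is quadratic in $n$ and $V^*V=I$, comparing coefficients gives $\Bone{T}=V^*\Bone{J}V$ and $\Btwo{T}=V^*\Btwo{J}V$, so $\hat{Q}(T,s)=V^*\hat{Q}(J,s)V\succeq 0$ and $T\in\mathfrak{F}_c$; and $J$ is invertible with $J^{-1}=\mat{U^*}{-cU^*}{0}{U^*}$, so $VT=JV$ forces $VT^{-1}=J^{-1}V$ while $\sigma(J)=\sigma(U)\subseteq\mathbb{T}$ by block-triangularity.

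\emph{Converse, setup.} Fix $T\in\mathfrak{F}_c$. The idea is to realize $T$ as a compression of a representation of a $C^*$-algebra carrying a universal copy of such a $J$. Inside $M_2(C(\mathbb{T}))\cong M_2\otimes C(\mathbb{T})$ put $\mathbf{J}=\mat{z}{cz}{0}{z}$ ($z$ the coordinate function), so that $\mathbf{J}^{*m}\mathbf{J}^n=z^{n-m}(w_mw_n^*+ee^*)$ with $w_n=\vect{1}{nc}$, $e=\vect{0}{1}$, and let $\mathcal{S}=\operatorname{span}\{\mathbf{J}^{*m}\mathbf{J}^n:m,n\ge 0\}$, an operator system. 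The first step is to verify that $\phi_0(\mathbf{J}^{*m}\mathbf{J}^n):=T^{*m}T^n$ is a well-defined, unital, self-adjoint linear map $\phi_0\colon\mathcal{S}\to\sB(H)$: sorting a vanishing relation $\sum c_{mn}\mathbf{J}^{*m}\mathbf{J}^n=0$ by the power $k=n-m$ of $z$ and reading off matrix entries forces, for each $k$, the vanishing of the zeroth, first, and second moments of the coefficients along the line $n-m=k$, which — since $T^{*m}T^n=Q(\min(m,n))\,T^{\,n-m}$ for $n\ge m$ (and the adjoint relation for $m>n$), with $Q$ quadratic — is precisely what makes $\sum c_{mn}T^{*m}T^n=0$.

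\emph{The crux.} The step I expect to be the main obstacle is proving that $\phi_0$ is completely positive; this is where the defining inequality of $\mathfrak{F}_c$, positivity of $\hat{Q}(T,s)$ for all real $s$, has to enter. Given $X\succeq 0$ in $M_k(\mathcal{S})\subseteq M_{2k}(C(\mathbb{T}))$, the plan is to view $X$ as a positive $M_{2k}$-valued trigonometric polynomial on $\mathbb{T}$, apply an operator Wiener--Hopf / Fej\'er--Riesz factorization to align it with the decomposition $\mathbf{J}^{*m}\mathbf{J}^n=z^{n-m}(w_mw_n^*+ee^*)$, and then split: the $ee^*$-part pulls back under $\phi_0$ to a multiple of $\Btwo{T}\succeq 0$ (a limit of $\tfrac{1}{n^2}T^{*n}T^n$; here one uses $T^*\Btwo{T}T=\Btwo{T}$, immediate from the $3$-isometry relation), while the $w_mw_n^*$-part pulls back to a combination of the operators $\hat{Q}(T,s)$, $s\in\mathbb{R}$ (for the model, $\hat{Q}(\mathbf{J},s)=w_sw_s^*$, $w_s=\vect{1}{sc}$), which is nonnegative precisely because $T\in\mathfrak{F}_c$. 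Carrying out this match between the discretely-indexed data $\{T^{*m}T^n\}$ and the continuous parameter $s$ is essentially a moment-problem argument, and is the heart of the proof. (An alternative I would keep in reserve: bypass the $C^*$-algebra machinery and construct $K$, a unitary $U$, and $V=\vect{V_1}{V_2}$ directly by a GNS-type completion, the positivity of the relevant Gram form again reducing to $\hat{Q}(T,s)\succeq 0$, at the cost of building the intertwiners by hand, again via $T^*\Btwo{T}T=\Btwo{T}$.)

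\emph{Conclusion.} The rest is formal. Complete positivity of the unital map $\phi_0$ lets Arveson's extension theorem produce a unital completely positive $\phi\colon M_2(C(\mathbb{T}))\to\sB(H)$ extending $\phi_0$, and Stinespring's dilation of $\phi$ gives a $*$-representation $\pi$ of $M_2(C(\mathbb{T}))$ on some $\tilde K$ and an isometry $V\colon H\to\tilde K$ with $\phi=V^*\pi(\cdot)V$. Since $M_2$ is a full matrix algebra, $\tilde K\cong K\oplus K$ with $\pi$ an ampliation on the $M_2$ factor and $\pi(z)=U\oplus U$ for a unitary $U$ on $K$, so $\pi(\mathbf{J})=\mat{U}{cU}{0}{U}=J$. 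Then $T^{*m}T^n=\phi_0(\mathbf{J}^{*m}\mathbf{J}^n)=V^*J^{*m}J^nV$ for all $m,n\ge 0$; in particular $T^n=V^*J^nV$, and the standard identity
\[\|J^nVh-VT^nh\|^2=\langle T^{*n}T^nh,h\rangle-2\operatorname{Re}\langle V^*J^nVh,T^nh\rangle+\|T^nh\|^2=0\]
gives $J^nV=VT^n$, hence $VT=JV$. Finally, for invertible $T$ one has $VT^{-1}=J^{-1}V$ as before, $\sigma(T)\subseteq\mathbb{T}$ (both $T$ and $T^{-1}$ are $3$-isometries, so $\|T^{\pm n}\|$ grows only polynomially and $r(T)=r(T^{-1})=1$), and choosing the dilation minimal pins down $\sigma(U)=\sigma(J)=\sigma(T)$.
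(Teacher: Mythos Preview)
The paper does not prove Theorem~\ref{3isothm}; it is quoted from \cite{McCullough}. However, the paper develops the two-variable analogue (Theorem~\ref{liftthm}) by the same method as McCullough's one-variable argument, so one can compare your approach to that template.

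The essential divergence is in how complete positivity is established. McCullough's route---mirrored here in Lemmas~\ref{twiddlemaplemma}--\ref{taumap} and Proposition~\ref{Arvesonextsym}---does \emph{not} attack the full map $\mathbf{J}^{*m}\mathbf{J}^n\mapsto T^{*m}T^n$ directly. Instead it first works on the much smaller \emph{symmetric} span $\mathcal{H}_s(\mathbf{J})=\operatorname{span}\{\mathbf{J}^{*n}\mathbf{J}^n:n\in\mathbb Z\}$, where complete positivity reduces (cf.\ Lemma~\ref{factorgivespos}) to a factorization $\hat{Q}(T,s)=(V_0+sV_1)^*(V_0+sV_1)$ of the one-variable pencil, and in one variable such a factorization always exists once $\hat{Q}(T,s)\succeq 0$. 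Agler's symmetrization trick, exploiting that $\mathbf{J}$ is rotationally symmetric, then upgrades this to complete positivity on the full span $\mathcal{H}(\mathbf{J})$ essentially for free. Your proposal instead confronts the full span head-on via a Fej\'er--Riesz factorization of the $M_{2k}$-valued trigonometric polynomial $X(z)$. The decomposition $\mathbf{J}^{*m}\mathbf{J}^n=z^{n-m}(w_mw_n^*+ee^*)$ is correct, but the assertion that ``the $w_mw_n^*$-part pulls back to a combination of the $\hat{Q}(T,s)$'' is where the real work is hiding: a Fej\'er--Riesz factor $P(z)$ of $X(z)$ need not respect the $2\times 2$ block structure that $\phi_0$ sees, and you give no mechanism for transporting the resulting sum of squares back through $\phi_0$. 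You have correctly flagged this as the crux, but what is written is a plan rather than an argument; the symmetrization technique is precisely the device that makes this step go through cleanly.

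Your treatment of the spectral clause is also too quick: minimality of the Stinespring dilation does not by itself force $\sigma(U)=\sigma(T)$. In \cite{McCullough} (and in Section~\ref{sec:spec} here for the tuple case) this is obtained by building, for each $p\notin\sigma(T)$, a holomorphic function $f_p$ with $|f_p|<1$ on $\sigma(T)$ and $|f_p|\ge 1$ on a neighbourhood $O_p$ of $p$, using $f_p^n(J)V\to 0$ to conclude that the spectral projection $E(O_p)$ annihilates each component $V_\ell$ of $V$, and then replacing $U$ by its compression to the range of $I-E(\sigma(T)^c)$.
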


By use of a functional calculus argument Theorem \ref{3symlift} can be recovered from Theorem \ref{3isothm}.\\
 
In the case of tuples of 3-symmetric and 3-isometric operators, the picture is not as clear. Ball and Helton \cite{BallandHelton} first considered a natural simplification of the problem. Let \[\{J_n=S_n+N_n\}\] be a finite collection of commuting Jordan operators such that the nilpotent parts have the following relation, 
\[N_iN_j=0\] 
for all $i$ and $j$ and the $S_n$ are self-adjoint. We will call this a \df{commuting Jordan family}. Let $\{T_n\}$ be a finite collection of commuting 3-symmetric operators that satisfy the following,
\[Q(s)=e^{-is_kT_k^*}\ldots e^{-is_1T_1^*}e^{is_1T_1}\ldots e^{is_k T_k}=\sum_{\substack{j_1,\ldots j_k\\j_1+\dots+j_k\leq 2}}
B_{j_1,\ldots ,j_k}s_1^{j_1}\ldots s_k^{j_k}.\]
We will call this a {\it commuting family of 3-symmetric operators}. 
\begin{conjecture}\cite{BallandHelton}\label{3symconjecture}
A collection of operators $\{T_n\}$ can be extended to a commuting Jordan family $\{J_n\}$ if and only if $\{T_n\}$ is a commuting family of 3-symmetric operators. 
\end{conjecture}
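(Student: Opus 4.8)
The plan is to prove the two implications separately, the forward one being the one of substance, just as for Theorems~\ref{3symlift} and \ref{3isothm}. For the easy reverse implication, suppose $VT_n=J_nV$ for an isometry $V$ and a commuting Jordan family $\{J_n=S_n+N_n\}$. Using $N_n^{2}=0$, $N_iN_j=0$ and that the self-adjoint parts commute with the nilpotent parts, one finds $e^{is_1J_1}\cdots e^{is_kJ_k}=W(s)\bigl(I+i\sum_n s_nN_n\bigr)$ with $W(s):=\exp\bigl(i\sum_n s_nS_n\bigr)$ unitary, hence
\[
Q_J(s):=\bigl(e^{is_1J_1}\cdots e^{is_kJ_k}\bigr)^{*}e^{is_1J_1}\cdots e^{is_kJ_k}=\Bigl(I+i\sum_n s_nN_n\Bigr)^{\!*}\Bigl(I+i\sum_n s_nN_n\Bigr),
\]
a polynomial of total degree $\le2$. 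Since $Vp(T)=p(J)V$ for polynomials $p$ — so $Ve^{is_nT_n}=e^{is_nJ_n}V$ — we get $Q(s)=V^{*}Q_J(s)V=G(s)^{*}G(s)$ with $G(s):=V+i\sum_n s_nN_nV$, an affine $\sB(H,K)$-valued function whose value at $s=0$ is the isometry $V$. In particular $\{T_n\}$ is a commuting family of $3$-symmetric operators, and we have isolated a necessary condition that will be the crux: the pencil $Q(s)$ admits such an affine factorization.

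For the forward direction, assume $\{T_n\}$ is a commuting family of $3$-symmetric operators with $Q(s)=\sum_{|j|\le2}B_j\,s_1^{j_1}\cdots s_k^{j_k}$. Two equivalent routes suggest themselves. (a) Factorization/moment route: produce directly an auxiliary Hilbert space $\mathcal K_0$, operators $R_1,\dots,R_k\in\sB(H,\mathcal K_0)$ and an isometry $W_0\colon H\to\mathcal K_0$ with $Q(s)=\bigl(W_0+i\sum_n s_nR_n\bigr)^{*}\bigl(W_0+i\sum_n s_nR_n\bigr)$, then build $K\supseteq H$ and a commuting Jordan family $\{J_n\}$ on $K$ realizing $R_n=N_nV$ and $W_0=V$ ($V\colon H\hookrightarrow K$ the inclusion) by a Kolmogorov/GNS construction, exactly as in the proof of Theorem~\ref{3isothm}. (b) Operator-algebra route: let $\mathcal A$ be the universal unital $C^{*}$-algebra on commuting self-adjoint $x_1,\dots,x_k$ and on $y_1,\dots,y_k$ with $y_iy_j=0$ and $[x_i,y_j]=0$, put $z_n=x_n+y_n$ (so a $*$-representation of $\mathcal A$ is a commuting Jordan family), let $\mathcal E\subseteq\mathcal A$ be the operator system spanned by $1$ and the length-$\le2$ words in the $z_n,z_n^{*}$, define $\Phi\colon\mathcal E\to\sB(H)$ by $\Phi\bigl(C_j(z^{*},z)\bigr)=B_j$ where the $C_j$ are the coefficients of $\bigl(e^{is_1z_1}\cdots e^{is_kz_k}\bigr)^{*}e^{is_1z_1}\cdots e^{is_kz_k}$, show $\Phi$ is unital completely positive, extend it to $\mathcal A$ by Arveson's theorem, and apply Stinespring to get a $*$-representation $\pi$ of $\mathcal A$ on $K\supseteq H$ with $\Phi=V^{*}\pi(\cdot)V$; then $J_n:=\pi(z_n)$ is a commuting Jordan family. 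In either route one last checks $VT_n=J_nV$: because $Q$ has degree $\le2$, the construction upgrades $Q(s)=V^{*}Q_J(s)V$ to the full intertwining $Ve^{is_1T_1}\cdots e^{is_kT_k}=e^{is_1J_1}\cdots e^{is_kJ_k}V$, and restricting $s$ to one coordinate axis and differentiating at $0$ gives $VT_n=J_nV$, so $VH$ is genuinely invariant, not merely semi-invariant.

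The main obstacle — and, I expect, the reason the conjecture as stated fails — is the step asserting the factorization in (a), equivalently the complete positivity of $\Phi$ in (b). Being a commuting family of $3$-symmetric operators only forces the degree-$2$ part of $Q(s)$ to be \emph{pointwise} positive (for each real $s$ and each $h\in H$, $\sum_{m,n}s_ms_n\langle B_{e_m+e_n}h,h\rangle\ge0$ after symmetrization), which is strictly weaker than its being a Hermitian sum of squares $\bigl(\sum_n s_nR_n\bigr)^{*}\bigl(\sum_n s_nR_n\bigr)$; the gap is exactly the positive-but-not-completely-positive phenomenon of Choi. Modifying Choi's example one builds a commuting family of $3$-symmetric operators for which no affine factorization of $Q$ exists, so by the necessary condition above it cannot extend to a commuting Jordan family; hence Conjecture~\ref{3symconjecture} is false as stated. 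The remedy is to add the hypothesis that $Q(s)$ admit a factorization $G(s)^{*}G(s)$ with $G$ affine and $G(0)$ an isometry (equivalently, that $\Phi$ be completely positive); with that, routes (a)/(b) go through. The added hypothesis is automatic when $k=1$ — there the degree-$2$ part is $s^{2}B_2$ with $B_2\ge0$ and the factorization always exists — so Theorem~\ref{3symlift} is recovered; and the same argument with unitaries and $u$-Jordan families (cf.\ Theorem~\ref{3isothm}) settles $3$-isometric tuples, from which the corrected $3$-symmetric statement can alternatively be obtained via $T=\exp(i\T)$.
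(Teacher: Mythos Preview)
Your diagnosis is right and aligns with the paper: lifting is equivalent to an affine factorization of the pencil (equivalently, complete positivity of the canonical map on $\sym_3$), and Choi's positive-but-not-completely-positive map is the source of the obstruction. Your route (b) is essentially what the paper does---complete positivity on a small operator system, Arveson extension, Stinespring, then the $(I-VV^*)$ idempotent trick to pass from compression to genuine invariance---though the paper carries it out for $3$-\emph{isometric} $2$-tuples and a specific model class $\mathfrak{J}_{c,d}$, with pencil $\hat{Q}_T=Q_T-c^{-2}B_{2,0}-d^{-2}B_{0,2}$ rather than your $Q_T$; the $3$-symmetric statement is only deduced afterward via $T_j=\exp(i\T_j)$ together with Taylor-spectrum control (Section~\ref{sec:spec}). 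Your direct $3$-symmetric formulation with $G(0)$ an isometry is natural for general Jordan families and morally equivalent.

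The substantive gap is the sentence ``modifying Choi's example one builds a commuting family of $3$-symmetric operators for which no affine factorization of $Q$ exists.'' Choi gives a non-factoring \emph{pencil}; you still owe commuting operators whose pencil it is. The paper devotes Section~\ref{sec:counter} to exactly this step: given any pencil $Q$ satisfying the positivity hypothesis, it builds a bilateral weighted-shift $2$-tuple on $H\otimes\ell^2(\mathbb{Z})\otimes\ell^2(\mathbb{Z})$ (Proposition~\ref{shiftconstruct}) and proves its $\hat{Q}$ factors iff the original does (Proposition~\ref{bothfactor}); combined with Lemma~\ref{factorlemma} and Proposition~\ref{nofactor} this yields the $3$-isometric counter-example. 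Your argument needs an analogous construction (either directly in the $3$-symmetric world, or by passing through the $3$-isometric one as the paper does). A smaller point: ``because $Q$ has degree $\le2$, the construction upgrades $V^*Q_JV=Q_T$ to the full intertwining'' is not automatic from the degree bound alone; it requires the symmetrization machinery of Lemmas~\ref{twiddlemaplemma}--\ref{taumap} and Proposition~\ref{Arvesonextsym} to go from positivity on $\mc{H}_s$ to positivity on the full $\mc{H}$.
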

Ball and Helton established this result using disconjugacy theory for multivariable Sturm-Liouville operators for tuples  $T$ of 3-isometric operators with a cyclic vector and satisfying a certain smoothness hypothesis. 
In this paper we show that an analog of this conjecture for tuples of 3-isometric operators is false and give a counter-example.
\begin{definition}
A commuting 2-tuple of operators $T=(T_1,T_2)$ is a 2-tuple of 3-isometries if there exists bounded operators $B_{i,j}$ for $0\le i+j\le 2$ (and $i,j\ge 0$) such that 
 \[Q_{T}(n,m)=T_2^{*m}T_1^{*n}T_1^nT_2^m=\sum_{0\le  i+j\le 2}  m^in^jB_{i,j}\]
for all $(n,m)\in\mathbb{N}$. We will call $Q_T$ the associated \df{quadratic pencil}.
\end{definition}

\begin{definition}
Fix positive real numbers $c,d$. A 2-tuple of commuting 3-isometries $T=(T_1,T_2)$ is in the class $\mathfrak{F}_{(c,d)}$ if 
\[\hat{Q}_{T}(\alpha,\beta)=Q_{T}(\alpha,\beta)-\frac{1}{c^2}{B}_{2,0}-\frac{1}{d^2}{B}_{0,2} \succeq 0\]
for all $(\alpha, \beta)\in \mathbb{R}^2$.
\end{definition}

 The following definition identifies a canonical class of model operators for the class $\mathcal{F}_{c,d}$. 
\begin{definition}
Given $c,d>0$ a 2-tuple $J=(J_1, J_2)$ is in the class $\mathfrak{J}_{c,d}$ if 
\begin{equation}\label{eq:Jforms}
J_1=\begin{pmatrix}U_1 & c U_1& 0\\ 0 & U_1 & 0\\ 0 & 0 & U_1\end{pmatrix},
\ \ \ \ 
J_2=\begin{pmatrix}U_2 & 0&d U_2\\0 & U_2 & 0\\ 0 & 0 & U_2\end{pmatrix}.
\end{equation}
for some unitary operators $U_1$, $U_2$ that commute. 
\end{definition}

Given $J\in\mathfrak{J}_{c,d}$, compute, for non-negative integers $m,n$, 
\[
J_1^n=\begin{pmatrix}U_1^n & nc U_1^n& 0\\ 0 & U_1^n & 0\\ 0 & 0 & U_1^n\end{pmatrix}, \ \ \ 
J_2^m=\begin{pmatrix}U_2^m & 0&md U_2^m\\0 & U_2^m & 0\\ 0 & 0 & U_2^m\end{pmatrix}
\]
and 
\begin{equation}\label{eq:J_1J_2}
J_2^{*m}J_1^{*n}J_1^nJ_2^m=\begin{pmatrix}1& nc & md\\ nc & n^2c^2+1 & ncmd\\ md & ncmd & m^2d^2+1\end{pmatrix}. 
\end{equation}
It follows that $\mathfrak{J}_{c,d}\subseteq \mathfrak{F}_{c,d}.$

\begin{thm}\label{liftthm} A 3-isometric 2-tuple  $T=(T_1, T_2)$  in the  class $\mathfrak{F}_{c,d}$  lifts to a 2-tuple $J=(J_1, J_2)$ in the class $\mathfrak{J}_{c,d}$ if and only if the the quadratic pencil $\hat{Q}_T(\alpha,\beta)$ 
factors in the form, 
\[\hat{Q}_{T}(\alpha,\beta)=(V_0+\alpha V_1 + \beta V_2)^*(V_0+\alpha V_1 + \beta V_2)\]
for some operators $V_0$, $V_1$ and $V_2$ in $\sB(H)$. 
\end{thm}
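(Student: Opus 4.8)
My plan is to prove the two implications separately; the forward one is essentially bookkeeping, and the converse carries the analytic weight. For ``only if'': suppose $T=(T_1,T_2)$ lifts to $J=(J_1,J_2)\in\mathfrak{J}_{c,d}$, say $VT_i=J_iV$ for an isometry $V=(V^{(1)},V^{(2)},V^{(3)})^{t}\colon H\to K\oplus K\oplus K$. Since $VT_1^nT_2^m=J_1^nJ_2^mV$ and $V^{*}V=I$, one has $Q_T(n,m)=V^{*}(J_2^{*m}J_1^{*n}J_1^nJ_2^m)V$, and substituting the explicit matrix \eqref{eq:J_1J_2} and matching coefficients in $(n,m)$ gives
\[
\hat{Q}_T(\alpha,\beta)=\bigl(V^{(1)}+\alpha c\,V^{(2)}+\beta d\,V^{(3)}\bigr)^{*}\bigl(V^{(1)}+\alpha c\,V^{(2)}+\beta d\,V^{(3)}\bigr);
\]
thus $V_0=V^{(1)}$, $V_1=cV^{(2)}$, $V_2=dV^{(3)}$ is the required factorization.

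For the converse, write the given factorization as $\hat{Q}_T(\alpha,\beta)=\Psi(\alpha,\beta)^{*}\Psi(\alpha,\beta)$ with $\Psi(\alpha,\beta)=V_0+\alpha V_1+\beta V_2\colon H\to\mathcal{E}$. Comparing the constant term and the coefficients of $\alpha^{2}$ and $\beta^{2}$ of $\hat{Q}_T$ with its definition yields $V_0^{*}V_0+c^{-2}V_1^{*}V_1+d^{-2}V_2^{*}V_2=I$, so that $W:=(V_0,c^{-1}V_1,d^{-1}V_2)^{t}\colon H\to\mathcal{E}\oplus\mathcal{E}\oplus\mathcal{E}$ is an isometry. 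Reading the equations $WT_i=J_iW$ blockwise against \eqref{eq:Jforms}, one sees that $T$ lifts into $\mathfrak{J}_{c,d}$ — with lift $W$ and with $J_1,J_2$ defined by \eqref{eq:Jforms} — as soon as one can produce \emph{commuting unitaries} $U_1,U_2$ on a Hilbert space $K\supseteq\mathcal{E}$ satisfying
\[
U_1V_1=V_1T_1,\qquad U_1V_2=V_2T_1,\qquad U_1V_0=(V_0-V_1)T_1,
\]
together with the corresponding relations for $U_2$, obtained by replacing $T_1$ by $T_2$ and interchanging the roles of $V_1$ and $V_2$.

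The core of the proof is the construction of $U_1,U_2$. Subtracting from the $3$-isometry identities for $T$ the two $T_i$-conjugation-invariant constants occurring in the definition of $\hat{Q}_T$ produces the polynomial identities $T_k^{*}\hat{Q}_T(\alpha,\beta)T_k=\hat{Q}_T(\alpha+\delta_{1k},\beta+\delta_{2k})$ for $k=1,2$, equivalently $(\Psi(\alpha,\beta)T_1)^{*}\Psi(\alpha,\beta)T_1=\Psi(\alpha+1,\beta)^{*}\Psi(\alpha+1,\beta)$ and its $T_2$ analogue. These identities, together with $T_1T_2=T_2T_1$, are what force the relations above to be mutually consistent: extended by the semigroup law, they should define a commuting pair of isometries on $\overline{\operatorname{ran}\Psi}$, which one then extends to a commuting pair of \emph{unitaries} $U_1,U_2$ on a larger Hilbert space $K$ (commuting isometries always extend to commuting unitaries). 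Because the original isometries are the restrictions of the $U_i$ to an invariant subspace, the intertwining relations survive the extension. An equivalent and perhaps cleaner organization is to build $K$ directly as the Hilbert-space completion, modulo the relations that $WT_i=J_iW$ imposes, of a space of $\mathcal{E}$-valued functions on $\mathbb{Z}^{2}$, to take $U_1,U_2$ to be the two coordinate shifts, and to verify by hand that the resulting sesquilinear form is well defined and positive semidefinite.

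I expect that last verification to be the main obstacle. It is exactly here that a genuine \emph{factorization} of $\hat{Q}_T$, rather than merely its positivity on $\mathbb{R}^{2}$, is needed: the factorization records the off-diagonal operators $V_i^{*}V_j$, whereas $\hat{Q}_T$ on $\mathbb{R}^{2}$ only sees their symmetrized combinations, and the possible absence of this extra datum is exactly what fails in the counterexample showing the factorization hypothesis cannot be dropped. In practice one will likely have to replace the given factorization by one whose off-diagonal terms are compatible with $T_1$ and $T_2$ — so that the polarized forms of the identities $T_k^{*}\hat{Q}_T(\alpha,\beta)T_k=\hat{Q}_T(\alpha+\delta_{1k},\beta+\delta_{2k})$, not merely their diagonal consequences, hold — and then argue that such a compatible factorization exists whenever any factorization does. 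Once $U_1,U_2$ are in hand, the facts that $W$ is an isometry, that $WT_i=J_iW$, and that $J_i\in\mathfrak{J}_{c,d}$ all hold by construction, completing the lift.
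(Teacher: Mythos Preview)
Your ``only if'' direction is correct and matches the paper's: once $VT_i=J_iV$, the relation $\hat Q_T(\alpha,\beta)=V^*\hat Q_J(\alpha,\beta)V$ together with the explicit rank-one form of $\hat Q_J$ gives the factorization immediately.

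Your ``if'' direction, however, has a genuine gap precisely where you flag it. The intertwining relations you write down (e.g.\ $U_1V_1=V_1T_1$, $U_1V_2=V_2T_1$, $U_1V_0=(V_0-V_1)T_1$) define an isometry on $\overline{\operatorname{span}}\{V_0H,V_1H,V_2H\}$ only if, for all $h_0,h_1,h_2\in H$,
\[
\|V_0h_0+V_1h_1+V_2h_2\|^2=\|(V_0-V_1)T_1h_0+V_1T_1h_1+V_2T_1h_2\|^2.
\]
Expanding both sides requires the individual cross terms $T_1^*V_i^*V_jT_1$ to match, not merely their symmetric parts. But the identity $T_1^*\hat Q_T(\alpha,\beta)T_1=\hat Q_T(\alpha+1,\beta)$ only controls $V_i^*V_j+V_j^*V_i$; the antisymmetric part $V_i^*V_j-V_j^*V_i$ is not determined by $\hat Q_T$ and depends on the particular factorization chosen. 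Your proposed fix --- replace the given factorization by a ``compatible'' one --- is circular: a compatible factorization is exactly what the lift produces via the ``only if'' direction, so asserting one exists whenever \emph{some} factorization exists is equivalent to the theorem itself. No amount of bookkeeping will close this.

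The paper avoids this entirely by a different route. It never tries to build $U_1,U_2$ from the $V_i$. Instead it observes that the existence of \emph{any} factorization makes the map $\rho:\mathcal H_s(\mathcal J_1,\mathcal J_2)\to\mathcal H_s(T_1,T_2)$, $\mathcal J_2^{*\beta}\mathcal J_1^{*\alpha}\mathcal J_1^{\alpha}\mathcal J_2^{\beta}\mapsto T_2^{*\beta}T_1^{*\alpha}T_1^{\alpha}T_2^{\beta}$, completely positive (Lemma~\ref{factorgivespos}); then, using a symmetrization argument exploiting the pairwise rotational symmetry of the model $\mathcal J$ (Lemmas~\ref{twiddlemaplemma}--\ref{taumap}), it upgrades this to complete positivity of the full unsymmetrized map on $\mathcal H(\mathcal J_1,\mathcal J_2)$. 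Stinespring's theorem then produces a representation $\pi$ and an isometry $V$ with $VT_1^nT_2^m=\pi(\mathcal J_1)^n\pi(\mathcal J_2)^mV$, and a separate algebraic lemma shows $\pi(\mathcal J)$ again lies in $\mathfrak J_{c,d}$. The factorization is used only to verify complete positivity; the lift comes from abstract dilation theory, not from an explicit construction out of $V_0,V_1,V_2$.
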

\noindent Theorem \ref{liftthm} is proved in Section \ref{sec:ext}.

The proof of the first part of the following remark for $3$-symmetric operators appears  in \cite{BallandHelton}. The proof of the result for $3$-isometries is similar. The proof of the second part of the remark can be found in Section \ref{sec:counter}. 
\begin{remark}\label{remark}
 If $H$ is finite dimensional and $T\in \mathfrak{F}_{c,d}$, then $T$ is a pair of commuting $u$-Jordan operators
 and the sufficient condition of Theorem \ref{liftthm} is easily verified.  Otherwise $H$ is infinite dimensional
 and  $\hat{Q}_T$ factors in the form above with $V_j:H\to \mathcal H$, where $\mathcal H$ is an auxiliary Hilbert space,
 if and only if it factors with $V_j\in \sB(H)$.
\end{remark}

Section \ref{sec:counter} exhibits, by construction, a 3-isometric 2-tuple $T$ in the class $\mathfrak{F}_{c,d}$ 
 for which $\hat{Q}_T$ does not factor (in the form given in Theorem \ref{liftthm}). We show that this $T$ does not lift to a $J\in \mathfrak{J}_{cd}$ and further that $T$ does not lift to any Jordan operator in any class $\mathfrak{J}_{\tilde{c},\tilde{d}}$ for any $\tilde{c}$ and $\tilde{d}$.
In this sense the 3-isometric analog of the conjecture of Ball-Helton is false. In Section \ref{sec:spec}  we show, by a functional calculus argument, that a 2-tuple of 3-symmetric operators lift if and only if its  associated operator polynomial factors.

\section{Extensions of Theorems} 
 \label{sec:ext}
We begin by extending the results found in \cite{McCullough} to 2-tuples of invertible commuting 3-isometries 
in  $\mathfrak{F}_{c,d}$. While the proofs only deal with 2-tuples, the extension to general $n$-tuples is apparent.

A subspace $A$ of $\sB(H)$ is \df{unital} if it contains the identity and is  \df{self-adjoint} if $T\in A$ implies $T^*\in A$. For a given $N\in \mathbb{N}$, let $M_N(\mathbb{C})$ be the space of $N\times N$ matrices with complex entries, denoted $M_N$ when the context is clear. Moreover, we denote with $M_N(A)$ the space of $N\times N$ \index{$M_N(A)$} matrices with entries from $A$. Note $M_N(A)$ can be identified with a subspace of the bounded operators on $H^{(N)}=H\oplus \cdots \oplus H$ ($N$-copies)
 as well as with $M_N\otimes A$.

\begin{definition}
  Suppose $H$ and $K$ are Hilbert spaces and $A$ is a unital self-adjoint subspace of $\sB(H)$. 
A  mapping $\rho:A\rightarrow \sB(K)$ is called positive if it maps positive elements to positive elements i.e. $\rho(a)\geq 0$ if $a\geq 0$. A mapping $\rho:A\rightarrow \sB(K)$ is called completely positive if the mapping $I_n\otimes \rho:M_n\otimes A\rightarrow M_n\otimes \sB(K)$ is positive for all $n\in \mathbb{N}$.
\end{definition}

\begin{definition}
Let $n$, $N$ and $M$ be given positive integers. An \df{hereditary polynomial} $p(x,y)$ (in two variables) of size $n$ and bi-degree at most $(M,N)$ in invertible variables $x_1, y_1, x_2, \text{ and } y_2$  such that $y_1$ and $y_2$ commute and $x_1$ and $x_2$ commute,
is a polynomial of the form
\begin{equation}\label{hereditarypolydegreetwo} 
p(x_1,y_1,x_2,y_2)=\sum_{\substack{\delta, \gamma =-M\\ \alpha, \beta=-N}}^{M,N}p_{\gamma, \alpha, \beta,\delta} y_2^{\gamma}y_1^\alpha x_1^\beta x_2^\delta. 
\end{equation}
Here the sum is finite and $p_{\gamma,\alpha,\beta,\delta}$ are $n\times n$ matrices over $\mathbb{C}$. Again, let $\mathcal{P}_n$ be the collection of 2-variable hereditary polynomials of size $n$ and let $\mathcal{P}=(\mathcal{P}_n)_n$ denote the collection of all hereditary polynomials.
\end{definition}

 Given a pair of commuting invertible  operators $T_1$ and $T_2$ on the Hilbert space $H$, let 
\begin{equation}\label{TSspan}
\mathcal{H}(T_1, T_2)=\text{span}\{T_2^{*\gamma}T_1^{*\alpha}T_1^\beta T_2^\delta:\ \gamma,\alpha, \beta,\delta \in \mathbb{Z}\}.
\end{equation}
 Note that $\mathcal{H}(T_1,T_2)$ is a unital self-adjoint subspace of $\sB(H)$. Recall that the Gelfand-Niamark-Segal construction realizes an abstract $C^*$-algebra as a subalgebra (unital and self-adjoint) of some $\sB(H)$. 

\begin{thm}[Stinespring]\label{paulsen}
Let $\mc{A}$ be a unital $C^*$-algebra and  $\phi:\mc{A}\rightarrow \sB(H)$ a linear map. If $\phi$ is completely positive, then there exists a Hilbert space $\mathcal{K}$, a unital $*$-homomorphism $\pi:\mc{A}\rightarrow B(\mc{K})$, and a bounded operator $V:H\rightarrow \mc{K}$ with $\|\phi(1)\|=\|V\|^2$ such that 
\[\phi(a)=V^*\pi(a)V.\] 
\end{thm}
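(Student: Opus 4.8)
The plan is a Gelfand--Naimark--Segal construction relative to the completely positive map $\phi$, in which complete positivity enters only through the principle that, for each $n$, the amplification $\phi\otimes I_n$ carries positive elements of $M_n(\mc{A})$ to positive operators on $H^{(n)}$. First I would form the algebraic tensor product $\mc{A}\otimes H$ and define on it the sesquilinear form determined by
\[\langle a\otimes x,\ b\otimes y\rangle_0=\langle\phi(b^{*}a)x,\ y\rangle_{H}\]
and extended linearly to finite sums. To see that this form is positive semidefinite, take $\xi=\sum_{i=1}^{n}a_i\otimes x_i$; a direct computation gives $\langle\xi,\xi\rangle_0=\langle(\phi\otimes I_n)(M)\,\hat x,\ \hat x\rangle$, where $\hat x=(x_1,\dots,x_n)\in H^{(n)}$ and $M=[a_i^{*}a_j]_{i,j}\in M_n(\mc{A})$. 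Since $M=A^{*}A$ for the matrix $A\in M_n(\mc{A})$ whose first row is $(a_1,\dots,a_n)$ and whose other entries are $0$, we have $M\succeq0$, hence $(\phi\otimes I_n)(M)\succeq0$ and $\langle\xi,\xi\rangle_0\ge0$.

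Next I would set $N=\{\xi\in\mc{A}\otimes H:\langle\xi,\xi\rangle_0=0\}$, a linear subspace by the Cauchy--Schwarz inequality for positive semidefinite forms, and let $\mc{K}$ be the Hilbert space completion of $(\mc{A}\otimes H)/N$ in the induced inner product. Define $\pi(a)$ on $\mc{A}\otimes H$ by $\pi(a)(b\otimes x)=(ab)\otimes x$; this is linear, multiplicative, unital (so $\pi(1)$ is the identity), and on the dense subspace it satisfies $\pi(a^{*})=\pi(a)^{*}$ by the symmetry of $\langle\cdot,\cdot\rangle_0$. The remaining point is the estimate $\|\pi(a)\xi\|_0^{2}\le\|a\|^{2}\|\xi\|_0^{2}$: writing $\|a\|^{2}1-a^{*}a=c^{*}c$ in the $C^{*}$-algebra $\mc{A}$ and taking $\xi=\sum_i b_i\otimes x_i$, the difference $\|a\|^{2}\|\xi\|_0^{2}-\|\pi(a)\xi\|_0^{2}$ equals $\langle(\phi\otimes I_n)([b_i^{*}c^{*}cb_j]_{i,j})\,\hat x,\ \hat x\rangle$, and $[b_i^{*}c^{*}cb_j]_{i,j}=[(cb_i)^{*}(cb_j)]_{i,j}=B^{*}B\succeq0$ for the matrix $B\in M_n(\mc{A})$ with first row $(cb_1,\dots,cb_n)$, so the difference is nonnegative. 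Consequently $\pi(a)$ maps $N$ into $N$, descends to $(\mc{A}\otimes H)/N$, and extends to $\pi(a)\in\sB(\mc{K})$ with $\|\pi(a)\|\le\|a\|$; passing the algebraic identities to $\mc{K}$ by continuity, $a\mapsto\pi(a)$ is a unital $*$-homomorphism $\pi:\mc{A}\to\sB(\mc{K})$.

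Finally I would define $V:H\to\mc{K}$ by $Vx=[\,1\otimes x\,]$, the image of $1\otimes x$ in $\mc{K}$; it is bounded since $\langle\phi(1)x,x\rangle\le\|\phi(1)\|\,\|x\|^{2}$. Then
\[\langle V^{*}Vx,\ y\rangle=\langle 1\otimes x,\ 1\otimes y\rangle_0=\langle\phi(1)x,\ y\rangle,\]
so $V^{*}V=\phi(1)$ and $\|V\|^{2}=\|V^{*}V\|=\|\phi(1)\|$; and
\[\langle V^{*}\pi(a)Vx,\ y\rangle=\langle a\otimes x,\ 1\otimes y\rangle_0=\langle\phi(a)x,\ y\rangle,\]
so $\phi(a)=V^{*}\pi(a)V$, as required.

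I expect the main obstacle to be the boundedness estimate for $\pi(a)$ on the pre-Hilbert space: it is the step that requires the factorization $\|a\|^{2}1-a^{*}a=c^{*}c$ together with a second appeal to complete positivity, and it is exactly what allows $\pi$ to pass to the completion $\mc{K}$ and to be a genuine $*$-homomorphism. By comparison, positivity of the initial form and verification of the two displayed identities for $V$ are routine bookkeeping once $\langle\cdot,\cdot\rangle_0$ has been set up correctly. One may also record at the outset that a completely positive map on a unital $C^{*}$-algebra is automatically bounded, though this is not strictly needed for the argument above.
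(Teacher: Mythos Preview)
Your proof is correct and is the standard GNS-type construction for Stinespring's dilation theorem. The paper itself does not prove this statement: it is quoted as a classical result (attributed to Stinespring) and used as a tool in the proof of Theorem~\ref{Arvesonext}, so there is no in-paper argument to compare against. Your write-up matches the textbook proof one finds, e.g., in Paulsen's book, including the two places where complete positivity is invoked (positivity of the form and the boundedness estimate for $\pi(a)$).
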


We now present a version of the Arveson Extension Theorem for 2-tuples of operators.
 
\begin{thm}[Arveson Extension Theorem]\label{Arvesonext}
Suppose that $T_1$ and $T_2$ are invertible operators on a Hilbert space $H$ and $S_1$ and $S_2$ are invertible operators on a Hilbert space $K$ . There is a Hilbert space $\mathcal{K}$, a representation $\pi:\sB(K)\rightarrow \sB(\mathcal{K})$, and an isometry $V:H\rightarrow K$ such that $VT_1^{\beta}T_2^{\gamma}=\pi(J_1)^{\beta}\pi(J_2)^\gamma V$ for all $\beta, \gamma\in \mathbb{Z}$ if and only if the mapping $\rho:\mathcal{H}(J_1,\ J_2)\rightarrow \mathcal{H}(T_1,T_2)$ is completely positive. 
\end{thm}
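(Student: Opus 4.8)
The plan is to derive this from two standard facts about completely positive maps: Arveson's extension theorem, which rests on the injectivity of $\sB(H)$, and the Stinespring dilation theorem, Theorem~\ref{paulsen}. Throughout, let $\rho$ be the unital self-adjoint linear map $\mathcal{H}(J_1,J_2)\to\mathcal{H}(T_1,T_2)$ determined on the spanning words by
\[
\rho(J_2^{*\gamma}J_1^{*\alpha}J_1^\beta J_2^\delta)=T_2^{*\gamma}T_1^{*\alpha}T_1^\beta T_2^\delta ,
\]
recalling that $\mathcal{H}(J_1,J_2)$ is a unital self-adjoint subspace of $\sB(K)$. The forward implication is soft: if $V:H\to\mathcal{K}$ is an isometry and $\pi$ a representation with $\pi(J_1)^\beta\pi(J_2)^\gamma V=VT_1^\beta T_2^\gamma$ for all $\beta,\gamma\in\mathbb{Z}$, then applying this relation and its adjoint, together with $V^*V=I$, gives $V^*\pi(X)V=\rho(X)$ for every spanning word $X$. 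Hence $\rho$ is the restriction to $\mathcal{H}(J_1,J_2)$ of the completely positive map $X\mapsto V^*\pi(X)V$ on $\sB(K)$, and is therefore completely positive.

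For the converse, assume $\rho$ is completely positive. As $\mathcal{H}(J_1,J_2)$ is an operator system inside the $C^*$-algebra $\sB(K)$ and $\sB(H)$ is injective, Arveson's extension theorem furnishes a completely positive $\Phi:\sB(K)\to\sB(H)$ extending $\rho$; since $I\in\mathcal{H}(J_1,J_2)$ and $\rho(I)=I$, the map $\Phi$ is unital. Feeding $\Phi$ into Theorem~\ref{paulsen} produces a Hilbert space $\mathcal{K}$, a unital representation $\pi:\sB(K)\to\sB(\mathcal{K})$, and a bounded operator $V:H\to\mathcal{K}$ with $\Phi(\cdot)=V^*\pi(\cdot)V$; from $V^*V=V^*\pi(I)V=\Phi(I)=I$ we see that $V$ is an isometry. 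Thus $\mathcal{K}$, $\pi$, and $V$ are the objects named in the theorem, and only the intertwining relation remains to be checked.

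For that, fix $\beta,\gamma\in\mathbb{Z}$ and set $a=J_1^\beta J_2^\gamma\in\sB(K)$. Both $a$ and $a^*a=J_2^{*\gamma}J_1^{*\beta}J_1^\beta J_2^\gamma$ lie in $\mathcal{H}(J_1,J_2)$, so, because $\rho$ carries a $J$-word to the matching $T$-word,
\[
\Phi(a^*a)=T_2^{*\gamma}T_1^{*\beta}T_1^\beta T_2^\gamma=(T_1^\beta T_2^\gamma)^*(T_1^\beta T_2^\gamma)=\Phi(a)^*\Phi(a);
\]
that is, $\Phi$ attains equality in its Schwarz inequality at $a$. This is precisely the situation in which a Stinespring dilation intertwines: expanding $\|\pi(a)Vh-V\Phi(a)h\|^2$ for $h\in H$ and simplifying with $\Phi=V^*\pi(\cdot)V$, $V^*V=I$, and the multiplicativity and $*$-preservation of $\pi$, the whole expression reduces to $\langle(\Phi(a^*a)-\Phi(a)^*\Phi(a))h,h\rangle=0$, whence $\pi(a)V=V\Phi(a)$. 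Since $\pi(a)=\pi(J_1)^\beta\pi(J_2)^\gamma$ and $\Phi(a)=T_1^\beta T_2^\gamma$, this is the asserted relation $\pi(J_1)^\beta\pi(J_2)^\gamma V=VT_1^\beta T_2^\gamma$.

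The one genuinely load-bearing step, rather than bookkeeping, is the Schwarz equality $\Phi(a^*a)=\Phi(a)^*\Phi(a)$ at the monomials $a=J_1^\beta J_2^\gamma$: it is built into the very definition of $\rho$, and without it the Stinespring dilation would merely compress onto $(T_1,T_2)$ instead of intertwining with it. The remaining care is routine: one needs to know $\rho$ is well defined, so that the complete positivity hypothesis even makes sense, and that $\mathcal{H}(J_1,J_2)$ is honestly an operator system in $\sB(K)$ — both assured because $J_1$ and $J_2$ are commuting invertibles. Finally, nothing in the argument used that there are only two operators, so it yields the analogous $n$-tuple statement without change.
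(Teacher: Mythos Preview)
Your proof is correct and follows essentially the same approach as the paper. The only differences are cosmetic: you make the Arveson extension step explicit (the paper applies Stinespring directly, implicitly invoking the extension since $\mathcal{H}(J_1,J_2)$ is only an operator system), and you derive the intertwining via the Schwarz-equality norm expansion $\|\pi(a)Vh-V\Phi(a)h\|^2=0$, whereas the paper writes the same computation as $V^*\pi(a)^*(I-VV^*)^2\pi(a)V=0$ and reads off $(I-VV^*)\pi(a)V=0$; these are the identical calculation in different notation.
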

\begin{proof}Suppose $\rho:\mathcal{H}(J_1,\ J_2)\rightarrow \mathcal{H}(T_1,\ T_2)$ determined by $\rho(J_2^{*\gamma}J_1^{*\alpha}J_1^\beta J_2^\delta)=T_2^{*\gamma}T_1^{*\alpha}T_1^\beta T_2^\delta$ is well defined and completely positive. In this case,  by Theorem \ref{paulsen}, there is a Hilbert space $\mathcal{K}$, a representation $\pi:\sB(H)\rightarrow \sB(\mathcal{K})$ and an isometry $V:H\rightarrow \mathcal{K}$ such that 
\[V^* \pi(J_2^{*\gamma}J_1^{*\alpha}J_1^\beta J_2^\delta)V= \rho(J_2^{*\gamma}J_1^{*\alpha}J_1^\beta J_2^\delta)=T_2^{*\gamma}T_1^{*\alpha}T_1^\beta T_2^\delta.\]
Since $\pi$ is an algebraic homomorphism which preserves involultions, 
\begin{equation}\label{eq: powerrule}
V^*\pi(J_2)^{*\gamma}\pi(J_1)^{*\alpha}\pi(J_1)^\beta\pi(J_2)^\delta V= T_2^{*\gamma}T_1^{*\alpha}T_1^\beta T_2^\delta.
\end{equation}
For each $\gamma, \beta\in \mathbb{Z}$, 
\begin{align*}
V^*\pi(J_2)^{*\gamma}\pi(J_1)^{*\beta}\pi(J_1)^\beta\pi(J_2)^\gamma V&= T_2^{*\gamma}T_1^{*\beta}T_1^\beta T_2^\gamma\\
&=V^*\pi(J_2)^{*\gamma}\pi(J_1)^{*\beta}VV^*\pi(J_1)^\beta\pi(J_2)^\gamma V
\end{align*}
by Equation \eqref{eq: powerrule}.
Hence
\[V^*\pi(J_2)^{*\gamma}\pi(J_1)^{*\beta}\pi(J_1)^\beta\pi(J_2)^\gamma V-V^*\pi(J_2)^{*\gamma}\pi(J_1)^{*\beta}VV^*\pi(J_1)^\beta\pi(J_2)^\gamma V=0.\]
Since $I-VV^*$ is a projection and hence idempotent,
\[V^*\pi(J_2)^{*\gamma}\pi(J_1)^{*\beta}(I-VV^*)^2\pi(J_1)^\beta\pi(J_2)^\gamma V=0.\]
Therefore
\[(I-VV^*)\pi(J_1)^\beta\pi(J_2)^\gamma V=0.\]
Consequently
\[\pi(J_1)^\beta\pi(J_2)^\gamma V=VV^*\pi(J_1)^\beta\pi(J_2)^\gamma V.\]
Again by Equation \eqref{eq: powerrule},
\[VT_1^{\beta}T_2^{\gamma}=\pi(J_1)^{\beta}\pi(J_2)^\gamma V.\]

 Since the converse is not needed for any of our theorems, we omit the straightforward proof.
\end{proof}
In \cite{McCullough}, a strong variant of Theorem \ref{Arvesonext} was proven using Agler's symmetrization technique. 
\begin{definition}
Given a two-variable hereditary polynomial $p(x_1,x_2,y_1,y_2)$ as in Equation \ref{hereditarypolydegreetwo},  
 define its \df{symmetrization} $p^s$ by
\begin{equation}\label{twovarsymmetrization}
p^s=\sum p_{\beta,\alpha,\alpha,\beta}y_2^\beta y_1^\alpha x_1^\alpha x_2^\beta.
\end{equation}
\end{definition}
Similarly, let
\begin{equation}\label{TSsymspan}
\mc{H}_s(T_1,\ T_2)=\text{span}\{T_2^{*\beta}T_1^{*\alpha}T_1^\alpha T_2^\beta \ :\ \alpha,\beta\in \mathbb{Z}\}. 
\end{equation}
In order to prove a strong variant of Theorem (\ref{Arvesonext}) we will need several lemmas. They are presented below.

\begin{definition}[Pairwise Rotationally Symmetric] A pair of operators $S_1$ and $S_2$ is  \df{pairwise rotationally symmetric} if for all $t\in \mathbb{R}^2$, $t=(t_1, t_2)$,  there exists a unitary operator $U_t$ such that 
\[e^{it_1}S_1=U_t^*S_1U_t \text{ and }e^{it_2}S_2=U_t^*S_2U_t.\]
\end{definition}

\begin{example}
 \label{pair-sym-ex}
Define on  $\L^2(\mathbb{T}^2)$ the operators 
\begin{equation}\label{eq:Z_1}
Z_1:\L^2(\mathbb{T}^2)\rightarrow \L^2(\mathbb{T}^2) \ \ \ \ Z_1f(z_1,z_2)=z_1f(z_1,z_2)
\end{equation}
and 
\begin{equation}\label{eq:Z_2}
Z_2:\L^2(\mathbb{T}^2)\rightarrow \L^2(\mathbb{T}^2) \ \ \ \ Z_2f(z_1,z_2)=z_2f(z_1,z_2).
\end{equation}
   Given $t$, define $U_t$ on $\L^2(\mathbb{T}^2)$ by $U_t f(\zeta_1,\zeta_2) = f(\exp(it_1)\zeta_1,\exp(it_2)\zeta_2)$. A calculation shows 
   $U_t Z_j = \exp(it_j) Z_j U_t$.  Hence the pair $(Z_1,Z_2)$ is pairwise rotationally symmetric.
\end{example}

\begin{lemma}
 \label{lem:makepairwise}
    If $S_1$ and $S_2$ are pairwise rotationally symmetric operators and $T_1$ and $T_2$ are operators on a common Hilbert space, then $\widetilde{T}_1=T_1\otimes S_1$ and $\widetilde{T}_2=T_2\otimes S_2$ are pairwise rotationally symmetric.
\end{lemma}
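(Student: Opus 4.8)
The plan is to exhibit, for each $t=(t_1,t_2)\in\mathbb R^2$, an explicit unitary on the tensor product Hilbert space that implements the required simultaneous rotation, built directly from the unitary supplied by the hypothesis on the pair $(S_1,S_2)$. Write $H$ for the common domain of $T_1,T_2$ and $\mathcal L$ for the space on which $S_1,S_2$ act, so that $\widetilde T_1,\widetilde T_2\in\sB(H\otimes\mathcal L)$.

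First I would invoke pairwise rotational symmetry of $(S_1,S_2)$: fix $t\in\mathbb R^2$ and let $U_t\in\sB(\mathcal L)$ be the unitary with $e^{it_1}S_1=U_t^*S_1U_t$ and $e^{it_2}S_2=U_t^*S_2U_t$. Then I would set $\widetilde U_t:=I_H\otimes U_t\in\sB(H\otimes\mathcal L)$ and observe that $\widetilde U_t$ is unitary, since the tensor product of unitaries (here $I_H$ and $U_t$) is unitary and $\widetilde U_t^*=I_H\otimes U_t^*$.

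The verification is then a one-line tensor computation: using multiplicativity of the tensor product on elementary tensors of operators,
\[
\widetilde U_t^*\,\widetilde T_1\,\widetilde U_t=(I_H\otimes U_t^*)(T_1\otimes S_1)(I_H\otimes U_t)=T_1\otimes\bigl(U_t^*S_1U_t\bigr)=T_1\otimes(e^{it_1}S_1)=e^{it_1}\widetilde T_1,
\]
and identically $\widetilde U_t^*\,\widetilde T_2\,\widetilde U_t=T_2\otimes(e^{it_2}S_2)=e^{it_2}\widetilde T_2$. Since $t$ was arbitrary, this shows $(\widetilde T_1,\widetilde T_2)$ is pairwise rotationally symmetric, with implementing unitaries $\{I_H\otimes U_t\}_{t\in\mathbb R^2}$.

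There is essentially no obstacle here; the only point requiring a word of care is that the scalar $e^{it_j}$ may be pulled out of the tensor factor freely (so that $T_j\otimes(e^{it_j}S_j)=e^{it_j}(T_j\otimes S_j)$), and that $I_H\otimes U_t$ is genuinely unitary on the completed tensor product $H\otimes\mathcal L$, not merely on the algebraic tensor product; both are standard. No commutativity or invertibility hypotheses on $T_1,T_2$ are needed, and the argument is manifestly the same for $n$-tuples.
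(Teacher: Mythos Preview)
Your proof is correct and follows the same approach as the paper: both take $\widetilde U_t=I\otimes U_t$ as the implementing unitary and use the tensor identity $T_j\otimes(e^{it_j}S_j)=e^{it_j}\widetilde T_j$. Your version simply spells out the verification that the paper leaves to the reader.
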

\begin{proof}
Since $S_1$ and $S_2$ are pairwise rotationally symmetric, for each $t=(t_1,t_2)\in\mathbb R^2$  there exists a unitary operator $U_t$ such that 
\[e^{it_1}S_1=U_t^*S_1U_t \quad\text{ and }\quad e^{it_2}S_2=U_t^*S_2U_t.\]
Since $e^{it_1}\tw{T_1}=T_1\otimes e^{it_1}S_1$ and $e^{it_2}\tw{T_2}=T_2\otimes e^{it_2}S_2$, to see that $\tw{T_1}$ and $\tw{T_2}$ are pairwise rotationally symmetric, consider the operators $\tw{U_t}=(I\otimes U_t)$. 
\end{proof}
\begin{lemma}\label{twiddlemaplemma}
If $J_1$ and $J_2$ are pairwise rotationally symmetric, $q\in \mathcal{P}$and $q(J_1,J_2)\succeq 0,$ then $q^s(J_1,J_2)\succeq~0$. 

Let $T_1$ and $T_2$ be given invertible operators on the Hilbert space $H$ and let $W:H\rightarrow H\otimes \L(\mathbb{T}^2)$ denote the isometry $Wh=h\otimes 1$. If $P\in \mathcal{P}_n$, then 
\[P^s(T_2^*,T_1^*,T_1,T_2)=(I_n\otimes W)^*P(\tw{T_2}^*, \tw{T_1}^*,\tw{T_1},\tw{T_2})(I_n\otimes W).\]
\end{lemma}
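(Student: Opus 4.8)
The plan is to prove both assertions with one device: pairwise rotational symmetry lets one average a non-negative operator over $\mathbb T^2$ so that every non-symmetric monomial is annihilated, and compression to the constant function $1\in L^2(\mathbb T^2)$ implements the same annihilation externally. For the first assertion, write $q(x_1,y_1,x_2,y_2)=\sum_{\gamma,\alpha,\beta,\delta}p_{\gamma,\alpha,\beta,\delta}\,y_2^\gamma y_1^\alpha x_1^\beta x_2^\delta$ with $p_{\gamma,\alpha,\beta,\delta}\in M_n$, so that $q(J_1,J_2)=\sum p_{\gamma,\alpha,\beta,\delta}\otimes J_2^{*\gamma}J_1^{*\alpha}J_1^\beta J_2^\delta$ in $M_n\otimes\sB(H)$. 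Fix $t=(t_1,t_2)\in\mathbb R^2$, let $U_t$ be the unitary furnished by pairwise rotational symmetry, so that $U_t^*J_jU_t=e^{it_j}J_j$ and $U_t^*J_j^*U_t=e^{-it_j}J_j^*$, and conjugate by $I_n\otimes U_t$ (which preserves positivity); a direct computation gives
\[(I_n\otimes U_t)^*\,q(J_1,J_2)\,(I_n\otimes U_t)=\sum p_{\gamma,\alpha,\beta,\delta}\,e^{it_1(\beta-\alpha)}e^{it_2(\delta-\gamma)}\otimes J_2^{*\gamma}J_1^{*\alpha}J_1^\beta J_2^\delta\succeq0.\]
Integrating this over $t\in\mathbb T^2$ against normalized Haar measure annihilates every term with $\alpha\ne\beta$ or $\gamma\ne\delta$ and leaves precisely $q^s(J_1,J_2)$; since the sum is finite, for every vector $v$ one has $\langle q^s(J_1,J_2)v,v\rangle=\int_{\mathbb T^2}\langle(I_n\otimes U_t)^*q(J_1,J_2)(I_n\otimes U_t)v,v\rangle\,dt\ge0$, whence $q^s(J_1,J_2)\succeq0$.

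For the second assertion, recall that $\tw{T_j}=T_j\otimes Z_j$, where $(Z_1,Z_2)$ are the commuting unitary multiplication operators on $L^2(\mathbb T^2)$ of Example \ref{pair-sym-ex}; since the $T_j$ are invertible and the $Z_j$ unitary, $\tw{T_j}^{\,k}=T_j^k\otimes Z_j^k$ for all $k\in\mathbb Z$. Writing $P=\sum P_{\gamma,\alpha,\beta,\delta}\,y_2^\gamma y_1^\alpha x_1^\beta x_2^\delta$ and evaluating hereditarily,
\[P(\tw{T_2}^*,\tw{T_1}^*,\tw{T_1},\tw{T_2})=\sum P_{\gamma,\alpha,\beta,\delta}\otimes\big(T_2^{*\gamma}T_1^{*\alpha}T_1^\beta T_2^\delta\big)\otimes\big(Z_2^{*\gamma}Z_1^{*\alpha}Z_1^\beta Z_2^\delta\big),\]
and since the $Z_j$ commute and are unitary the third tensor leg equals multiplication by $z_1^{\beta-\alpha}z_2^{\delta-\gamma}$. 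Compressing by $I_n\otimes W$ replaces, factor by factor, a multiplication operator $M_f$ with the scalar $\langle M_f1,1\rangle_{L^2(\mathbb T^2)}$, that is, with the constant Fourier coefficient of $f$; for $f=z_1^{\beta-\alpha}z_2^{\delta-\gamma}$ this equals $\delta_{\alpha,\beta}\delta_{\gamma,\delta}$. Hence $(I_n\otimes W)^*P(\tw{T_2}^*,\tw{T_1}^*,\tw{T_1},\tw{T_2})(I_n\otimes W)=\sum_{\alpha,\beta}P_{\beta,\alpha,\alpha,\beta}\otimes T_2^{*\beta}T_1^{*\alpha}T_1^\alpha T_2^\beta=P^s(T_2^*,T_1^*,T_1,T_2)$.

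There is no serious obstacle here; the step most prone to error is the bookkeeping — keeping the hereditary orderings straight (adjoints on the left, outer variables outermost) and tracking the three tensor legs $M_n$, $\sB(H)$, $\sB(L^2(\mathbb T^2))$ through the conjugations and the compression. The conceptual content is just that pairwise rotational symmetry is exactly the hypothesis under which the $\mathbb T^2$-average — equivalently, compression to $1\in L^2(\mathbb T^2)$ — annihilates every monomial that is not already symmetric, which is why essentially the same two lines prove both halves of the lemma.
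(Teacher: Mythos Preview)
Your proposal is correct and follows essentially the same approach as the paper: for the first assertion you conjugate by $I_n\otimes U_t$ and average over $\mathbb T^2$, and for the second you compute the $L^2(\mathbb T^2)$ tensor leg and compress to the constant function, exactly as the paper does (the paper carries out the second part via an inner-product calculation on elementary tensors rather than isolating the third tensor leg, but the computation is the same). Your explicit remark that averaging and compression to $1$ are two incarnations of the same annihilation of non-symmetric monomials is a nice unifying observation the paper leaves implicit.
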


We will occasionally use the notation $p(T^*,T)$ for $p(T_2^*,T_1^*,T_1,T_2)$.

\begin{proof}
Let $n$ denote the size of $q$ (i.e. $q\in \mc{P}_n$).  For each $t=(t_1,t_2)\in \mathbb{R}^2$ there is a unitary operator $U_t$ such that 
\[e^{it_1}J_1=U_t^*J_1U_t \quad\text{ and }\quad e^{it_2}J_2=U_t^*J_2U_t\] 
by a combination of Lemma \ref{lem:makepairwise} and Example \ref{pair-sym-ex}. Hence
\[U_t^*J_2J_1U_t=U_t^*J_2U_tU_t^*J_1U_t.\]
It follows that 
\[q(e^{-it_2}J_2^*, e^{-it_1}J_1^*, e^{it_1}J_1,e^{it_2}J_2)=(I_n\otimes U_t)^*q(J_2^*,J_1^*, J_1, J_2)(1\otimes U_t)\succeq 0.\]
Hence,
\[q^s(J_2^*,J_1^*, J_1, J_2)=\frac{1}{4\pi^2}\int_{0}^{2\pi}\int_{0}^{2\pi}q(e^{-it_2}J_2^*, e^{-it_1}J_1^*, e^{it_1}J_1,e^{it_2}J_2)\ dt\succeq 0.\]
To prove the second assertion, let $p\in \mathcal{P}_1$ and compute
\begin{align*}
\ip{p(\tw{T_2}^*,\tw{T_1}^*,\tw{T_1},\tw{T_2})Wh}{Wf}=&\ip{p(\tw{T_2}^*,\tw{T_1}^*,\tw{T_1},\tw{T_2})h\otimes 1}{f\otimes 1}\\
=&\ip{\sum_{\gamma, \alpha,\beta,\delta}p_{\gamma, \alpha,\beta,\delta}T_1^{\beta}T_2^{\delta}h \otimes e^{it_2\delta}e^{it_1\beta}}{T_1^{\alpha}T_2^{\gamma}f \otimes e^{it_2\gamma}e^{it_1\alpha}}\\
=&\ip{\sum p_{\beta,\alpha,\alpha,\beta}T_2^{*\beta}T_1^{*\alpha}T_1^\alpha T_2^\beta h}{f}\\
=&\ip{p^s(T_2^*,T_1^*,T_1,T_2)h}{f}.
\end{align*}
Applying this result entry-wise, we get the result for $P$.
\end{proof}

\begin{lemma}\label{twiddlemap}
Suppose $T_1$, $T_2$ are invertible operators on a Hilbert space $H$ and $J_1$ and $J_2$ are invertible operators on a Hilbert space $K$. If $J_1$ and $J_2$ are pairwise rotationally symmetric and the mapping $\rho:\mc{H}_s(J_1, J_2)\rightarrow \mc{H}_s(T_1,T_2)$ determined by $\rho(J_2^{\beta*}J_1^{\alpha*}J_1^\alpha J_2^\beta)=T_2^{\beta*}T_1^{\alpha*}T_1^\alpha T_2^\beta$ is (well defined and) completely positive, then the mapping $\tw{\rho}:\mc{H}(J_1,J_2)\rightarrow \mc{H}(\tw{T_1},\tw{T_2})$ determined by 
\[\tw{\rho}(J_2^{\gamma*}J_1^{\alpha*}J_1^{\beta}J_2^{\delta})=\tw{T_2}^{\gamma*}\tw{T_1}^{\alpha^*}\tw{T_1}^{\beta}\tw{T_2}^{\delta}\]
is also (well defined and) completely positive.
\end{lemma}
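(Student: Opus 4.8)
The plan is to reduce the general assertion, via a Stinespring dilation, to the diagonal‑free special case $J=T$, which is handled by a direct rotational–symmetry computation.

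\emph{Base case.} First I would prove: \emph{if $S=(S_1,S_2)$ is a commuting pair of invertibles on a Hilbert space $\mathcal E$ that is pairwise rotationally symmetric, then the map $\Lambda_S\colon\mathcal H(S_1,S_2)\to\mathcal H(\widetilde S_1,\widetilde S_2)$ determined by $S_2^{*\gamma}S_1^{*\alpha}S_1^{\beta}S_2^{\delta}\mapsto\widetilde S_2^{*\gamma}\widetilde S_1^{*\alpha}\widetilde S_1^{\beta}\widetilde S_2^{\delta}$ is well defined and completely positive.} Indeed, fix $P\in\mathcal P_n$. Since $\widetilde S_j=S_j\otimes Z_j$ and $Z_2^{*\gamma}Z_1^{*\alpha}Z_1^{\beta}Z_2^{\delta}$ is multiplication by $z_1^{\beta-\alpha}z_2^{\delta-\gamma}$ on $\L^2(\mathbb{T}^2)$, after the identification $\mathbb{C}^n\otimes\mathcal E\otimes\L^2(\mathbb{T}^2)\cong\L^2(\mathbb{T}^2,\mathbb{C}^n\otimes\mathcal E)$ the operator $P(\widetilde S^*,\widetilde S)$ is multiplication by the trigonometric polynomial $\Phi(z)=\sum_{\gamma,\alpha,\beta,\delta}z_1^{\beta-\alpha}z_2^{\delta-\gamma}\,P_{\gamma,\alpha,\beta,\delta}\otimes S_2^{*\gamma}S_1^{*\alpha}S_1^{\beta}S_2^{\delta}$. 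Writing $z_j=e^{it_j}$ and using a unitary $U_t$ with $e^{it_j}S_j=U_t^*S_jU_t$ (Example \ref{pair-sym-ex} is the prototype), one checks directly that $\Phi(e^{it_1},e^{it_2})=(I_n\otimes U_t)^*P(S^*,S)(I_n\otimes U_t)$. Hence $P(S^*,S)\succeq0$ forces $\Phi(z)\succeq0$ for every $z\in\mathbb{T}^2$, so $P(\widetilde S^*,\widetilde S)=M_\Phi\succeq0$; taking $P(S^*,S)=0$ gives $\Phi\equiv0$, i.e.\ well‑definedness.

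\emph{Dilation.} Since $\mathcal H_s(J_1,J_2)$ is a unital operator system and $\rho$ is completely positive, the Arveson extension theorem extends $\rho$ to a completely positive $\Phi_0\colon\sB(K)\to\sB(H)$, and Theorem \ref{paulsen} yields a unital $*$‑representation $\pi_0\colon\sB(K)\to\sB(\mathcal K_0)$ and an isometry $V_0\colon H\to\mathcal K_0$ with $\Phi_0=V_0^*\pi_0(\cdot)V_0$; in particular
\[
V_0^*\,\pi_0\!\big((J_1^pJ_2^q)^*(J_1^pJ_2^q)\big)\,V_0=(T_1^pT_2^q)^*(T_1^pT_2^q)\qquad(p,q\in\mathbb{Z}).
\]
Put $\mathcal J_j:=\pi_0(J_j)$; then $(\mathcal J_1,\mathcal J_2)$ is a commuting pair of invertibles which is pairwise rotationally symmetric, because the unitaries $\pi_0(U_t)$ implement the required equivalences. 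Define $V\colon H\otimes\L^2(\mathbb{T}^2)\to\mathcal K_0\otimes\L^2(\mathbb{T}^2)$ on the orthogonal summands $H\otimes z_1^pz_2^q$ by $V(h\otimes z_1^pz_2^q)=\big(\mathcal J_1^p\mathcal J_2^qV_0T_1^{-p}T_2^{-q}h\big)\otimes z_1^pz_2^q$. The displayed identities show that $V$ maps each summand isometrically into $\mathcal K_0\otimes z_1^pz_2^q$, hence is an isometry, and a short computation gives $V\widetilde T_j=\widetilde{\mathcal J}_jV$ with $\widetilde{\mathcal J}_j=\mathcal J_j\otimes Z_j$; thus $\widetilde T$ lifts to $\widetilde{\mathcal J}$.

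\emph{Assembly.} Because $V$ is an isometry intertwining the commuting tuples $\widetilde T$ and $\widetilde{\mathcal J}$, a direct computation (as in the proof of Theorem \ref{Arvesonext}) gives $\widetilde T_2^{*\gamma}\widetilde T_1^{*\alpha}\widetilde T_1^{\beta}\widetilde T_2^{\delta}=V^*\,\widetilde{\mathcal J}_2^{*\gamma}\widetilde{\mathcal J}_1^{*\alpha}\widetilde{\mathcal J}_1^{\beta}\widetilde{\mathcal J}_2^{\delta}\,V$, and since $\widetilde{\mathcal J}_2^{*\gamma}\widetilde{\mathcal J}_1^{*\alpha}\widetilde{\mathcal J}_1^{\beta}\widetilde{\mathcal J}_2^{\delta}=\Lambda_{\mathcal J}\big(\pi_0(J_2^{*\gamma}J_1^{*\alpha}J_1^{\beta}J_2^{\delta})\big)$, the map $\widetilde\rho$ is the composition
\[
\mathcal H(J_1,J_2)\ \xrightarrow{\ \pi_0\ }\ \mathcal H(\mathcal J_1,\mathcal J_2)\ \xrightarrow{\ \Lambda_{\mathcal J}\ }\ \mathcal H(\widetilde{\mathcal J}_1,\widetilde{\mathcal J}_2)\ \xrightarrow{\ V^*(\cdot)V\ }\ \mathcal H(\widetilde T_1,\widetilde T_2).
\]
The first arrow is completely positive because $\pi_0$ is a $*$‑homomorphism, the second by the base case applied to $S=\mathcal J$, and the third because compression by a bounded operator is completely positive; the same description shows $\widetilde\rho$ is well defined, and being a composition of completely positive maps it is completely positive. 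The point that is easy to miss — and in my view the main obstacle — is that Agler's symmetrization (as in Lemma \ref{twiddlemaplemma}) by itself only controls the diagonal subsystem $\mathcal H_s$ and loses the off‑diagonal information; the Stinespring step is exactly what converts the available diagonal data into a genuine representation $\mathcal J=\pi_0(J)$ that is still pairwise rotationally symmetric, so that the base case can be brought to bear on all of $\mathcal H(\mathcal J)$.
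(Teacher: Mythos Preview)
Your argument is correct and takes a genuinely different route from the paper. The paper works directly with hereditary polynomials: given $p\in\mathcal P_n$ with $p(J^*,J)\succeq 0$, it embeds $p$ in the block polynomial $P$ of \eqref{eq:twomatlevel}, uses Lemma~\ref{twiddlemaplemma} to pass from $P(J^*,J)\succeq 0$ to $P^s(J^*,J)\succeq 0$, applies the hypothesis on $\rho$ to get $P^s(T^*,T)\succeq 0$, and then unpacks this via the identity $P^s(T^*,T)=(I\otimes W)^*P(\tw{T}^*,\tw{T})(I\otimes W)$ together with a density argument to conclude $p(\tw{T}^*,\tw{T})\succeq 0$; no Stinespring or Arveson extension is invoked at this stage. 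By contrast, you invoke Arveson extension and Theorem~\ref{paulsen} at the outset to replace $J$ by $\mathcal J=\pi_0(J)$ (still pairwise rotationally symmetric), build an explicit intertwining isometry $V$ between $\tw{T}$ and $\tw{\mathcal J}$, and then factor $\tw{\rho}$ as $\pi_0$ followed by your base-case map $\Lambda_{\mathcal J}$ followed by compression by $V$. Your route is more structural and exhibits $\tw{\rho}$ visibly as a composition of three completely positive maps; the paper's is self-contained and defers the dilation machinery to Proposition~\ref{Arvesonextsym}. One caveat: your verification that $V\tw{T}_j=\tw{\mathcal J}_jV$ tacitly uses that the pairs $(T_1,T_2)$ and $(J_1,J_2)$ commute (for instance, the step $T_1^{-(p+1)}T_2^{-q}T_1=T_1^{-p}T_2^{-q}$), which the lemma as stated does not literally assume, though every application in the paper is to commuting pairs; the paper's block-polynomial computation can be run with the fixed hereditary ordering $y_2y_1x_1x_2$ throughout and so avoids this.
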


\begin{proof}Fix a positive integer $n$ and a $p\in \mathcal{P}_n$  and suppose $p(J^*,J)\succeq 0$. We are to show $p(\tw{T}^*,\tw{T})\succeq 0$. Given a pair of integers $(M,N)$ let $P$ denote the $(2M+1)\times (2M+1)$ matrix whose entries are the $(2N+1)\times(2N+1)$ matrices whose entries are $n\times n$ matrices,   
\begin{equation}\label{eq:twomatlevel}P=\left(\left(\left(I_n\otimes y_2^{j_2}\right)\left(I_n\otimes y_1^{j_1}\right)p(x,y)\left(I_n\otimes x_1^{k_1}\right)\left(I_n\otimes x_2^{k_2}\right)\right)_{j_1,k_1=-N}^N \right)_{j_2,k_2=-M}^M
\end{equation}
Thus $P(T^*,T)$ is an operator on $((\mathbb{C}^n\otimes H)\otimes \mathbb{C}^{2N+1})\otimes \mathbb{C}^{2M+1}$ and the entries of $P(T^*,T)$ are operators of $(\mathbb{C}^n\otimes H)\otimes \mathbb{C}^{2N+1}$ given by 
\begin{equation}\label{eq:Pentry} \left(\left(I_n\otimes T_2^{j_2}\right)\left(I_n\otimes T_1^{j_1}\right)p(T^*,T)\left(I_n\otimes T_1^{k_1}\right)\left(I_n\otimes T_2^{k_2}\right)\right)_{j_1,k_1=-N}^N.
\end{equation}
Note that $P(J^*,J)\succeq 0$ and thus, by Lemma \ref{twiddlemaplemma}, $P^s(J^*,J)\succeq 0$. Thus, by the hypotheses of this lemma, $P^s(T^*,T)\succeq 0$.
Let $\{e_1,\dots,e_n\}$ denote the standard basis for  $\mathbb C^n$.  Reusing notation, let $\{f_{-N},\ldots, f_0,\ldots , f_N\}$ and  $\{f_{-M},\ldots, f_0,\ldots , f_M\}$  denote the standard bases  for $\mathbb {C}^{2N+1}$ and  $\mathbb{C}^{2M+1}$ respectively. A generic vector in $\mathbb{C}^n\otimes H\otimes \mathbb{C}^{2N+1}\otimes \mathbb{C}^{2M+1}$, the space that $P(T^*,T)$ acts upon, has the representation
\[
 h = \sum h_{j,a,\alpha} \otimes e_j \otimes f_a \otimes f_\alpha.
\]
Let $p_{j,k}(\tw{T^*},\tw{T})$ denote the $j,k$-th entry of $p(\tw{T^*},\tw{T})$. Compute, using Lemma \ref{twiddlemaplemma},
\begin{equation}\label{densesums}
 \begin{split}
 0 \le &\ip{P^s(T_2^*, T_1^*, T_1, T_2)h}{h}\\
    &=\ip{P(\tw{T}_2^*, \tw{T}_1^*,\tw{T}_1, \tw{T}_2)h\otimes 1}{h\otimes 1}\\
    & =  \sum_{a,b,\alpha,\beta} \sum_{j,k} \ip{\tw{T}_1^{*b} \tw{T}_2^{*\beta} \, p_{j,k}(\tw{T}^*,\tw{T})\, \tw{T}_1^{a} \tw{T}_2^{\alpha}\, h_{j,a,\alpha}\otimes 1}{h_{k,b,\beta}\otimes 1}. \\
  &  = \sum_{a,b,\alpha,\beta}  \sum_{j,k} \langle  p_{j,k}(\tw{T}^*,\tw{T})\,  T_2^{\alpha} T_1^a h_{j,a,b}  \otimes z_1^a z_2^{\alpha}, \, T_2^{\beta} T_1^b h_{k,\alpha,\beta} \otimes z_1^b z_2^\beta \rangle \\
 & = \sum_{j,k} \langle p_{j,k}(\tw{T}^*,\tw{T})[ \sum_{a,\alpha} T_2^{\alpha} T_1^a h_{j,a,b}  \otimes z_1^a z_2^{\alpha}], \,  [\sum_{b,\beta}  T_2^{\beta}T_1^b h_{k,b,\beta}\otimes z_1^b z_2^{\beta}] \rangle \\
  & = \langle p(\tw{T}^*,\tw{T}) g,\, g\rangle,
\end{split}
\end{equation}
 where 
\[
  g = \sum_{j=1}^n \sum_{a=-N}^N \sum_{\alpha=-M}^M T_2^{\alpha} T_1^a h_{j,a,b}  \otimes z_1^a z_2^{\alpha}. 
\]
 Since $T_1$ and $T_2$ are invertible, given vectors $g_{j,a,b} \in H$, there exists vectors $h_{j,a,b}$ such that 
\[
 g = \sum_{j=1}^n \sum_{a=-N}^N \sum_{\alpha=-M}^M  g_{j,a,b}  \otimes z_1^a z_2^{\alpha}. 
\]
 Finally, since vectors of the form $g$ are dense in $H\otimes L^2(\mathbb T^2)$, it follows that $p(\tw{T}^*,\tw{T})\succeq 0$; i.e., that map $\tw{\rho}$ is completely positive.
\end{proof}

\begin{lemma}\label{taumap}
Suppose $T_1$ and $T_2$ are invertible operators in $\sB(H).$  If $p\in \mathcal{P}$ and\\ $p(\tw{T_2}^*, \tw{T_1}^*, \tw{T_1}, \tw{T_2})\geq0$, then $p({T_2}^*, {T_1}^*, T_1, T_2)\geq 0$. In particular the mapping \[\tau:p(\tw{T_2}^*, \tw{T_1}^*, \tw{T_1}, \tw{T_2})\mapsto p({T_2}^*, {T_1}^*, T_1, T_2)\] is well defined. 
\end{lemma}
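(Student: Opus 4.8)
The plan is to realize $p(T_2^*,T_1^*,T_1,T_2)$ as a norm limit of compressions of $p(\tw{T_2}^*,\tw{T_1}^*,\tw{T_1},\tw{T_2})$ by isometries $H\to H\otimes\L^2(\mathbb T^2)$ of the form $h\mapsto h\otimes\phi$, and then to use that the cone of positive operators is norm closed. The point that must be handled correctly is the choice of $\phi$: the computation in the proof of Lemma~\ref{twiddlemaplemma} shows that the naive choice $\phi=1$, i.e.\ compression by $W$, returns the \emph{symmetrization} $p^s(T_2^*,T_1^*,T_1,T_2)$, because $\ip{z_1^a z_2^b\cdot 1}{1}_{\L^2(\mathbb T^2)}$ equals $1$ for $a=b=0$ and $0$ otherwise, which annihilates every non-symmetric coefficient of $p$. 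To retain all the coefficients I will instead use test vectors that implement \emph{approximate point evaluation at} $(1,1)\in\mathbb T^2$.

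Concretely, fix unit vectors $\phi_k\in\L^2(\mathbb T^2)$---say normalized indicator functions of shrinking neighbourhoods of $(1,1)$---for which the probability measures $|\phi_k|^2\,dm$ converge weak-$*$ to the point mass at $(1,1)$, and let $W_k\colon H\to H\otimes\L^2(\mathbb T^2)$ be the isometry $W_kh=h\otimes\phi_k$. Since $\tw{T_j}=T_j\otimes Z_j$ with $Z_j$ multiplication by the $j$-th coordinate, the monomial $\tw{T_2}^{*\gamma}\tw{T_1}^{*\alpha}\tw{T_1}^\beta\tw{T_2}^\delta$ is $T_2^{*\gamma}T_1^{*\alpha}T_1^\beta T_2^\delta$ tensored with multiplication by $z_1^{\beta-\alpha}z_2^{\delta-\gamma}$, so
\begin{align*}
&(I_n\otimes W_k)^*\,p(\tw{T}^*,\tw{T})\,(I_n\otimes W_k)\\
&\qquad=\sum_{\gamma,\alpha,\beta,\delta}c_k(\gamma,\alpha,\beta,\delta)\;p_{\gamma,\alpha,\beta,\delta}\otimes T_2^{*\gamma}T_1^{*\alpha}T_1^\beta T_2^\delta,
\end{align*}
where $c_k(\gamma,\alpha,\beta,\delta)=\int_{\mathbb T^2}z_1^{\beta-\alpha}z_2^{\delta-\gamma}\,|\phi_k|^2\,dm$. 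As $k\to\infty$ each $c_k(\gamma,\alpha,\beta,\delta)\to 1$, since $z_1^{\beta-\alpha}z_2^{\delta-\gamma}$ is continuous on $\mathbb T^2$ with value $1$ at $(1,1)$; hence the right-hand side, a fixed finite sum, converges in operator norm to $\sum_{\gamma,\alpha,\beta,\delta}p_{\gamma,\alpha,\beta,\delta}\otimes T_2^{*\gamma}T_1^{*\alpha}T_1^\beta T_2^\delta=p(T_2^*,T_1^*,T_1,T_2)$. (Invertibility of $T_1$ and $T_2$, hence of $\tw{T_1},\tw{T_2}$, is exactly what makes the negative powers, and so $p(\tw T^*,\tw T)$ itself, meaningful.) Each operator on the left is a compression by an isometry of the positive operator $p(\tw T^*,\tw T)$, hence positive, and the positive cone is norm closed, so the limit $p(T_2^*,T_1^*,T_1,T_2)$ is positive. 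This proves the stated implication for every $p\in\mathcal P$, and in particular that $\tau$ is well defined: if $p(\tw T^*,\tw T)=q(\tw T^*,\tw T)$, applying the implication to $p-q$ and to $q-p$ forces $p(T_2^*,T_1^*,T_1,T_2)=q(T_2^*,T_1^*,T_1,T_2)$.

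I do not expect any real obstacle beyond isolating the family $\{\phi_k\}$; the only trap is the ``obvious'' embedding $h\mapsto h\otimes1$, which symmetrizes and so must be replaced by approximate evaluation at $(1,1)$. Conceptually, $\tau$ is the restriction to the operator system $\mathcal H(\tw{T_1},\tw{T_2})$ of the $*$-homomorphism $\mathrm{id}_{\sB(H)}\otimes\chi\colon\sB(H)\otimes_{\min}C(\mathbb T^2)\to\sB(H)$, where $\chi$ is the character $f\mapsto f(1,1)$ of $C(\mathbb T^2)$; taking that point of view gives a second, non-quantitative proof, and---since restrictions of completely positive maps are completely positive---it also shows $\tau$ is completely positive, which is exactly what one needs when $\tau$ is composed with the map furnished by Lemma~\ref{twiddlemap}.
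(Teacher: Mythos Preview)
Your argument is correct and follows the same strategy as the paper: both realize $p(T^*,T)$ as a limit of compressions $(I_n\otimes W_\phi)^*p(\tw T^*,\tw T)(I_n\otimes W_\phi)$ with $W_\phi h=h\otimes\phi$ for unit vectors $\phi\in\L^2(\mathbb T^2)$ whose squared moduli form an approximate identity at $(1,1)$; the paper simply chooses the Fej\'er-type vectors $\phi=D_{N,M}=\frac{1}{\sqrt{(2N+1)(2M+1)}}\sum_{|j|\le N,|k|\le M}e^{ijt_1}e^{ikt_2}$ (so that $c_{N,M}(\gamma,\alpha,\beta,\delta)=\tfrac{2N+1-|\alpha-\beta|}{2N+1}\cdot\tfrac{2M+1-|\gamma-\delta|}{2M+1}$) in place of your normalized indicators. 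Your closing observation that $\tau$ is the restriction of the $*$-homomorphism $\mathrm{id}_{\sB(H)}\otimes\chi_{(1,1)}$ on $\sB(H)\otimes_{\min}C(\mathbb T^2)$ is a clean conceptual upgrade not in the paper, and it does give complete positivity of $\tau$ for free.
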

\begin{proof}
Let 
\[D_{NM}=\frac{1}{\sqrt{2N+1}}\frac{1}{\sqrt{2M+1}}\sum_{j=-N}^N\sum_{ k=-M}^{M}e^{ijt_1}e^{ikt_2} \in L^2(\mathbb{T}^2).\]
If $f,h\in H,$ then for $\alpha, \beta, \gamma, \delta \in \mathbb{Z}$, 
\begin{align*}
&\ip{\tw{T_2}^{*\gamma}\tw{T_1}^{*\beta}\tw{T_1}^{\alpha}\tw{T_2}^\delta h\otimes D_{N,M}}{f\otimes D_{N,M}}\\&=\ip{\tw{T_1}^\alpha\tw{T_2}^\delta  h\otimes D_{N,M}}{\tw{T_1}^\beta \tw{T_2}^\gamma f\otimes D_{N,M}}\\
&=\ip{T_1^\alpha T_2^\delta h}{T_1^\beta T_2^\gamma f}\ip{z_1^\alpha z_2^\delta D_{N,M}}{z_1^\beta z_2^\gamma D_{N,M}}\\
&=\ip{T_1^\alpha T_2^\delta h}{T_1^\beta T_2^\gamma f}\left(\frac{1}{(2M+1)(2N+1)}\right)\ip{\sum_{j=-N+|\alpha-\beta|}^{N+|\alpha-\beta|}\sum_{k=-M+|\gamma-\delta|}^{M+|\gamma-\delta|}e^{ijt_1}e^{ikt_2}}{\sum_{j=-N}^{N}\sum_{k=-M}^{M}e^{ijt_1}e^{ikt_2}}\\
&=\ip{T_1^\alpha T_2^\delta h}{T_1^\beta T_2^\gamma f}\left(\frac{2N+1-|\alpha-\beta|}{2N+1}\right)\left(\frac{2M+1-|\gamma-\delta|}{2M+1}\right).
\end{align*}
Thus if $p\in \mathcal{P}_1$, 
\[\lim_{N\rightarrow \infty}\lim_{M\rightarrow \infty}\ip{p(\tw{T}_2^*, \tw{T}_1^*,\tw{T}_1, \tw{T}_2)h\otimes D_{N,M}}{f\otimes D_{N,M}}=\ip{p({T}_2^*, {T}_1^*,{T}_1, {T}_2)h}{f}.\]
Hence if $p(\tw{T}_2^*, \tw{T}_1^*,\tw{T}_1, \tw{T}_2)\succeq 0,$ then $p({T}_2^*, {T}_1^*,{T}_1, {T}_2)\succeq 0$ as well. The case for square matrices is easily established. 
\end{proof}
\begin{prop}\label{Arvesonextsym}
Suppose $T_1$ and $T_2$ are invertible operators on a Hilbert space $H$, and $J_1$ and $J_2$ are invertible operators on a Hilbert space $K$. If $J_1$ and $J_2$ are pairwise rotationally symmetric and the mapping $\rho:\mc{H}_s(J_1, J_2)\rightarrow \mc{H}_s(T_1, T_2)$
determined by $\rho(J_2^{*\beta}J_1^{*\alpha}J_1^\alpha J_2^\beta)=T_2^{*\beta}T_1^{*\alpha}T_1^\alpha T_2^\beta$ is well defined and completely positive, then there is a Hilbert space $\mc{K}$, a representation $\pi:\sB(K)\rightarrow B(\mc{K})$, and a isometry $V$ such that $VT_2^{m}T_1^{n}=\pi(J_1)^{n}\pi(J_2)^{m}V$ for $m,n \in \mathbb{Z}$.
\end{prop}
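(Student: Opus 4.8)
The plan is to deduce the conclusion from the unsymmetrized Arveson extension theorem, Theorem~\ref{Arvesonext}, applied to the pairs $(J_1,J_2)$ and $(T_1,T_2)$. That theorem asks that the map $\rho\colon\mc{H}(J_1,J_2)\to\mc{H}(T_1,T_2)$ determined by $\rho(J_2^{*\gamma}J_1^{*\alpha}J_1^\beta J_2^\delta)=T_2^{*\gamma}T_1^{*\alpha}T_1^\beta T_2^\delta$ be well defined and completely positive, whereas the hypothesis only provides complete positivity of the analogous map on the \emph{symmetric} spans $\mc{H}_s(J_1,J_2)\to\mc{H}_s(T_1,T_2)$. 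I would close this gap by routing through the auxiliary pair $\tw{T}=(\tw{T_1},\tw{T_2})$ on $H\otimes L^2(\mathbb{T}^2)$, with $\tw{T_j}=T_j\otimes Z_j$ and $Z_j$ as in Example~\ref{pair-sym-ex}, composing Lemma~\ref{twiddlemap} and Lemma~\ref{taumap}.

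First I would apply Lemma~\ref{twiddlemap}: since $J_1,J_2$ are pairwise rotationally symmetric and $\rho\colon\mc{H}_s(J_1,J_2)\to\mc{H}_s(T_1,T_2)$ is well defined and completely positive, the map
\[
\tw{\rho}\colon\mc{H}(J_1,J_2)\to\mc{H}(\tw{T_1},\tw{T_2}),\qquad \tw{\rho}\bigl(J_2^{*\gamma}J_1^{*\alpha}J_1^\beta J_2^\delta\bigr)=\tw{T_2}^{*\gamma}\tw{T_1}^{*\alpha}\tw{T_1}^{\beta}\tw{T_2}^{\delta},
\]
is well defined and completely positive. Then, since $T_1,T_2$ are invertible, Lemma~\ref{taumap} gives a well-defined map $\tau\colon\mc{H}(\tw{T_1},\tw{T_2})\to\mc{H}(T_1,T_2)$ with $\tau\bigl(p(\tw{T}^*,\tw{T})\bigr)=p(T^*,T)$ for $p\in\mathcal{P}$. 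Moreover $\tau$ is completely positive: an arbitrary positive element of $M_n(\mc{H}(\tw{T_1},\tw{T_2}))$ is of the form $P(\tw{T}^*,\tw{T})$ for some $P\in\mathcal{P}_n$ with $P(\tw{T}^*,\tw{T})\succeq0$, and the matricial case of Lemma~\ref{taumap} then yields $P(T^*,T)=(I_n\otimes\tau)\bigl(P(\tw{T}^*,\tw{T})\bigr)\succeq0$.

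Finally I would form $\Phi=\tau\circ\tw{\rho}\colon\mc{H}(J_1,J_2)\to\mc{H}(T_1,T_2)$, which, being a composition of well-defined completely positive maps, is well defined and completely positive, and which on generators satisfies
\[
\Phi\bigl(J_2^{*\gamma}J_1^{*\alpha}J_1^\beta J_2^\delta\bigr)=\tau\bigl(\tw{T_2}^{*\gamma}\tw{T_1}^{*\alpha}\tw{T_1}^{\beta}\tw{T_2}^{\delta}\bigr)=T_2^{*\gamma}T_1^{*\alpha}T_1^\beta T_2^\delta.
\]
Hence $\Phi$ is exactly the map $\rho$ of Theorem~\ref{Arvesonext}, so that theorem supplies a Hilbert space $\mc{K}$, a representation $\pi\colon\sB(K)\to\sB(\mc{K})$, and an isometry $V\colon H\to\mc{K}$ with $VT_1^{\beta}T_2^{\gamma}=\pi(J_1)^{\beta}\pi(J_2)^{\gamma}V$ for all $\beta,\gamma\in\mathbb{Z}$. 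Since $T_1,T_2$ commute (and, as $\pi$ is a homomorphism and $J_1,J_2$ commute, so do $\pi(J_1),\pi(J_2)$), relabeling $\beta=n$ and $\gamma=m$ gives $VT_2^{m}T_1^{n}=\pi(J_1)^{n}\pi(J_2)^{m}V$ for all $m,n\in\mathbb{Z}$, which is the assertion.

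The main work is carried by Lemmas~\ref{twiddlemap} and~\ref{taumap}, so I do not expect a serious obstacle; the one step needing a moment's care is promoting Lemma~\ref{taumap} to complete positivity of $\tau$ by checking that every positive matrix over $\mc{H}(\tw{T_1},\tw{T_2})$ is the value at $\tw{T}$ of a hereditary polynomial of the appropriate size. Everything else---tracking the generator identities across the three pairs $(J_1,J_2)$, $(\tw{T_1},\tw{T_2})$, $(T_1,T_2)$ and the final relabeling---is routine bookkeeping.
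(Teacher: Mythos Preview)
Your proposal is correct and follows essentially the same route as the paper: apply Lemma~\ref{twiddlemap} to pass from the symmetric map $\rho$ to the full map $\tw{\rho}$ into $\mc{H}(\tw{T_1},\tw{T_2})$, compose with the map $\tau$ of Lemma~\ref{taumap}, and then invoke Theorem~\ref{Arvesonext}. The paper's own proof is the same three-line composition $\tau\circ\tw{\rho}$; your added remarks on promoting Lemma~\ref{taumap} to complete positivity and on the final commutation/relabeling are reasonable elaborations of details the paper leaves implicit.
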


\begin{proof}
The mapping  $\tau:\mc{H}(\tw{T_1},\tw{T_2})\rightarrow \mc{H}(T_1,T_2)$ as described in Lemma \ref{taumap}, is well defined and completely positive. The mapping $\tw{\rho}:\mc{H}(J_1,J_2)\rightarrow \mc{H}(\tw{T_2},\tw{T_2})$ as described in \ref{twiddlemap} is also well defined and completely positive. 
Their composition 
\[\rho=\tau\circ \tw{\rho}\]
 is well defined and completely positive. The proposition now follows from Theorem \ref{Arvesonext}.
\end{proof}

Fix $c,d>0$ and  define, for $0\le i+j\le 2$ (here $i,j$ are non-negative integers),  the $3\times 3$ matrices  $B_{i,j}$ by
\begin{equation}
 \label{defineBij}
 I+\sum_{0< i+j\le 2}  B_{i,j} \alpha^i \beta^j = \begin{pmatrix} 1 & \alpha \, c &  \beta\, d \\ \alpha \, c & 1+\alpha^2 \, c^2 & \alpha \beta\, cd \\ \beta \, d & \alpha\beta \, cd & 1+\beta^2 \, d^2 \end{pmatrix},
\end{equation}
 and $B_{0,0} = I -\frac{1}{c^2}B_{2,0} -\frac{1}{d^2} B_{0,2}$.
Define, 
\begin{equation}\label{Jform}
\J_1=\begin{pmatrix}U_1 & c U_1& 0\\ 0 & U_1 & 0\\ 0 & 0 & U_1\end{pmatrix}
\J_2=\begin{pmatrix}U_2 & 0&d U_2\\0 & U_2 & 0\\ 0 & 0 & U_2\end{pmatrix},
\end{equation}
where $U_1=Z_1$ and $U_2=Z_2$, the pairwise rotationally symmetric operators in Example \ref{pair-sym-ex}. We note $\J_1$ and $\J_2$ are pairwise rotationally symmetric via Lemma \ref{lem:makepairwise}. It is clear that from the calculation done in Equation \eqref{eq:J_1J_2} that $\J=(\J_1,\J_2)\in \mathfrak{J}_{c,d}$ and
\begin{equation} \label{eq:Q(J_1,J_2)}
 Q_{\J}(\alpha,\beta) =  \big (I+\sum_{0<i+j\le 2}  B_{i,j} \alpha^i \beta^j \, \big )\otimes I.
\end{equation}
 In particular $B_{i,j}(\J) = B_{i,j}\otimes I$ and we  define  $B_{0,0}(\J) = B_{0,0}\otimes I$. 

\begin{lemma}\label{factorgivespos} 
 If  $T=(T_1, T_2)$ is in the class $\mathfrak{F}_{c,d}$, and
\footnotesize
\[\hat{Q}_{T}(\alpha,\beta)=Q_T(\alpha,\beta)-\frac{1}{c^2}B_{2,0}(T)-\frac{1}{d^2}B_{0,2}(T)\succeq 0\]
\normalsize
factors in the form, 
\begin{equation}
 \label{eq:factorgivespos}
   \hat{Q}_{T}(\alpha,\beta)=(V_0+\alpha V_1 + \beta V_2)^*(V_0+\alpha V_1 + \beta V_2),
\end{equation}
then the map $\rho(\J_2^{*\beta}\J_1^{*\alpha}\J_1^\alpha \J_2^\beta)=T_2^{*\beta}T_1^{*\alpha}T_1^{\alpha}T_2^{\beta}$ is well defined and completely positive. 
\end{lemma}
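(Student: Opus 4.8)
The plan is to make the operator system $\mathcal H_s(\J_1,\J_2)$ explicit, use this to see $\rho$ is well defined, and then turn the hypothesized factorization of $\hat Q_T$ into a manifestly completely positive formula for $\rho$.

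First I would identify the domain. Since $U_1=Z_1$ and $U_2=Z_2$ are unitary, \eqref{eq:J_1J_2} gives, for $\alpha,\beta\in\mathbb Z$,
\[
 \J_2^{*\beta}\J_1^{*\alpha}\J_1^{\alpha}\J_2^{\beta}=Q_{\J}(\alpha,\beta)=\big(I+\alpha B_{1,0}+\beta B_{0,1}+\alpha^2B_{2,0}+\alpha\beta B_{1,1}+\beta^2B_{0,2}\big)\otimes I ,
\]
so $\mathcal H_s(\J_1,\J_2)$ is the span of $I\otimes I$ together with the matrices $B_{i,j}(\J)=B_{i,j}\otimes I$ for $0<i+j\le2$; a short computation with a few values of $(\alpha,\beta)$ shows this span is attained (it is in fact the space of complex symmetric $3\times3$ matrices tensored with $I$). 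The six matrices $I,B_{1,0},B_{0,1},B_{2,0},B_{1,1},B_{0,2}$ are linearly independent in $M_3(\mathbb C)$, so every linear relation among the generators $Q_{\J}(\alpha,\beta)$ is forced coefficientwise in $(\alpha,\beta)$; since $T$ is an invertible $3$-isometric tuple one has $T_2^{*\beta}T_1^{*\alpha}T_1^{\alpha}T_2^{\beta}=Q_T(\alpha,\beta)=I+\alpha B_{1,0}(T)+\cdots+\beta^2B_{0,2}(T)$ for all $\alpha,\beta\in\mathbb Z$, and the same relations then hold on the $T$-side. Hence $\rho$ is a well defined linear map, with $\rho(I\otimes I)=I$ and $\rho(B_{i,j}(\J))=B_{i,j}(T)$.

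Next I would feed in the factorization. Matching the coefficients of $1,\alpha,\beta,\alpha^2,\alpha\beta,\beta^2$ in
\[
 \hat Q_T(\alpha,\beta)=(V_0+\alpha V_1+\beta V_2)^*(V_0+\alpha V_1+\beta V_2)
\]
gives $V_1^*V_1=B_{2,0}(T)$, $V_2^*V_2=B_{0,2}(T)$, $V_1^*V_2+V_2^*V_1=B_{1,1}(T)$, $V_0^*V_1+V_1^*V_0=B_{1,0}(T)$, $V_0^*V_2+V_2^*V_0=B_{0,1}(T)$, and, crucially, $V_0^*V_0=I-\tfrac1{c^2}V_1^*V_1-\tfrac1{d^2}V_2^*V_2$, since the constant term of $\hat Q_T$ is $I-\tfrac1{c^2}B_{2,0}(T)-\tfrac1{d^2}B_{0,2}(T)$. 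Put $W_0=V_0$, $W_1=V_1/c$, $W_2=V_2/d$, so that $W_0^*W_0=I-W_1^*W_1-W_2^*W_2$ and each $B_{i,j}(T)$ is the evident combination of products $W_r^*W_s$. Writing a general element of $\mathcal H_s(\J_1,\J_2)$ as $M\otimes I$ with $M=(m_{rs})_{r,s=0}^{2}$ complex symmetric and expanding $M$ in the basis above, a routine computation — in which the lone inhomogeneous term $m_{00}I$ is absorbed into $m_{00}W_0^*W_0$ via the identity just noted — collapses $\rho$ to
\[
 \rho(M\otimes I)=\sum_{r,s=0}^{2}m_{rs}\,W_r^{*}W_s .
\]

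Finally I would check complete positivity. Fix $n$ and suppose $[M^{(kl)}\otimes I]_{k,l=1}^{n}$ is positive; equivalently the $3n\times3n$ scalar matrix with $((k,r),(l,s))$-entry $m^{(kl)}_{rs}$ is positive, so it has a Gram decomposition $m^{(kl)}_{rs}=\langle v_{l,s},v_{k,r}\rangle$ with $v_{k,r}\in\mathbb C^{3n}$. Writing $v_{k,r}=\sum_p(v_{k,r})_p\,u_p$ in the standard basis and substituting into the last display,
\[
 \big[\rho(M^{(kl)}\otimes I)\big]_{k,l}=\sum_{p}\big[\,X_{k,p}^{*}X_{l,p}\,\big]_{k,l},\qquad X_{k,p}:=\sum_{r=0}^{2}(v_{k,r})_p\,W_r ,
\]
and each summand equals $Y_p^{*}Y_p$ for the row operator $Y_p=(X_{1,p},\dots,X_{n,p})$, hence is positive; thus $I_n\otimes\rho$ is positive for all $n$, i.e.\ $\rho$ is completely positive. (Equivalently, one may note that $\langle[\rho(M^{(kl)}\otimes I)]\xi,\xi\rangle$ equals the sum of all entries of the Hadamard product of the positive matrix $(m^{(kl)}_{rs})$ with the positive matrix $(\overline{\langle W_r\xi_k,W_s\xi_l\rangle})$.) The only genuinely delicate part is the bookkeeping in the second step — above all, recognizing that the constant term $V_0^{*}V_0$ of the factorization is exactly the constant term of $\hat Q_T$, which is what lets $\rho(M\otimes I)$ be put in the homogeneous form $\sum m_{rs}W_r^{*}W_s$; once $\rho$ has that form its complete positivity is standard.
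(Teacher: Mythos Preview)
Your proof is correct and follows essentially the same route as the paper: identify $\mathcal H_s(\J_1,\J_2)$ with $\operatorname{Sym}_3(\mathbb C)\otimes I$ via the basis $\{B_{i,j}(\J)\}$, read off $B_{i,j}(T)$ from the coefficients of the factorization of $\hat Q_T$, and for a positive element of $M_n\otimes\mathcal H_s(\J)$ factor the associated positive $3n\times3n$ scalar matrix to exhibit $(I_n\otimes\rho)(X)$ as a sum of squares. The only cosmetic difference is that you rescale to $W_0=V_0,\ W_1=V_1/c,\ W_2=V_2/d$ so that the formula collapses to $\rho(M\otimes I)=\sum_{r,s}m_{rs}W_r^*W_s$, whereas the paper keeps the $V_j$'s and instead rescales the block matrix $X$ by $\operatorname{diag}(1,c,d)$ before factoring it as $(Y_0\ Y_1\ Y_2)^*(Y_0\ Y_1\ Y_2)$ and writing $(I_n\otimes\rho)(X)=(Y_0\otimes V_0+Y_1\otimes V_1+Y_2\otimes V_2)^*(Y_0\otimes V_0+Y_1\otimes V_1+Y_2\otimes V_2)$; your Gram decomposition of $(m^{(kl)}_{rs})$ is exactly this factorization unpacked entrywise.
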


\begin{proof}
 Suppose the 2-tuple $T=(T_1, T_2)$ is in the class $\mathfrak{F}_{c,d}$ and for notational convenience let 
\[{B}_{0,0}(T)=I-\frac{1}{c^2}{B}_{2,0}(T)-\frac{1}{d^2}{B}_{0,2}(T)=I-\frac{1}{c^2}\Btwo{T_1}-\frac{1}{d^2}\Btwo{T_2}.\]
Note that 
\[{B}_{0,0}(T)\succeq 0\]
since $Q_{T}(\alpha,\beta)\succeq 0$ for $\alpha=\beta=0.$
The spaces $\mc{H}_s(\J_1,\J_2)$ and $\mc{H}_s(T_1,T_2)$ are spanned by 
\[\{{B}_{0,0}(\J),\ {B}_{1,0}(\J),\ {B}_{0,1}(\J),\ {B}_{1,1}(\J),\ {B}_{2,0}(\J),\ {B}_{0,2}(\J)\}\]
and
\[\{{B}_{0,0}(T),\ {B}_{1,0}(T),\ {B}_{0,1}(T),\ {B}_{1,1}(T),\ {B}_{2,0}(T),\ {B}_{0,2}(T)\}\]
respectively. For positive integers $n$, let $M_n$ denote the  $n\times n$ matrices. The elements $X\in M_n\otimes \mc{H}_s(\J_1,\J_2)$  have 
 the form
\[
 X= \sum_{0\le i+j\le 2} X_{i,j}\otimes B_{i,j}(\J).
\]
Equivalently,
\[
X\cong\begin{pmatrix}X_{0,0}& cX_{1,0} & dX_{0,1}\\ cX_{1,0} & c^2X_{2,0} & cdX_{1,1}\\ dX_{0,1} & cdX_{1,1} & d^2X_{0,2}\end{pmatrix}\otimes I.
\]
If $X\succeq 0$, then each $X_{i,j}$ is self-adjoint. Further $X\succeq 0$ if and only if 
\[Y=\begin{pmatrix}X_{0,0}& X_{1,0} & X_{0,1}\\ X_{1,0} & X_{2,0} & X_{1,1}\\ X_{0,1} & X_{1,1} & X_{0,2}\end{pmatrix}
\] 
is as well. In this case, there exists $3n\times n$ matrices $Y_0,Y_1,Y_2$ such that
 \[\begin{pmatrix}X_{0,0}& X_{1,0} & X_{0,1}\\ X_{1,0} & X_{2,0} & X_{1,1}\\ X_{0,1} & X_{1,1} & X_{0,2}\end{pmatrix}=\begin{pmatrix}Y_0^*\\ Y_1^*\\ Y_2\end{pmatrix}\begin{pmatrix}Y_0&Y_1&Y_2\end{pmatrix}.
\] 
Using the  factorization \eqref{eq:factorgivespos}, 
\begin{equation}
\begin{aligned}
1_m\otimes \rho(X)&=\sum X_{i,j}\otimes B_{i,j}(T)\\
&=X_{0,0}\otimes V_0^*V_0+X_{1,0}\otimes (V_0^*V_1+V_1^*V_0)+X_{0,1}\otimes(V_0^*V_2+V_2^*V_0)\\
&\hspace{.5cm}+X_{1,1}\otimes(V_1^*V_2+V_2^*V_1)+X_{2,0}\otimes(V_1^*V_1)+X_{0,2}\otimes(V_2^*V_2)\\
&=(Y_0\otimes V_0+Y_1\otimes V_1+Y_2\otimes V_2)^*(Y_0\otimes V_0+Y_1\otimes V_1+Y_2\otimes V_2).
\end{aligned}
\end{equation}
Since the right hand side is evidently positive, the map $\rho$ is completely positive.
\end{proof}

By Proposition \ref{Arvesonextsym} and Lemma \ref{factorgivespos} since $\J_1$ and $\J_2$ are pairwise rotationally symmetric, we have shown a factorization \eqref{eq:factorgivespos} implies there is a representation $\pi$ such that the 2-tuple $T$ lifts to the 2-tuple $\pi(\J)$. It remains to show that 
 any representation applied to $\J=(\J_1,\J_2)$ produces a 2-tuple of the same form.

\begin{lemma}\label{properform}
Let $E$ be the Hilbert space that $\J_1$ and $\J_2$ act upon. If $\tw{E}$ is also a Hilbert space and $\pi:B(E)\rightarrow B(\tw{E})$ is a unital $*$-representation, then 
$J_1=\pi(\J_1)$ and $J_2=\pi(\J_2)$ have, up to unitary equivalence, the same form as $\J_1$ and $\J_2$ given by Equation \eqref{eq:Jforms} and in particular  are in the class $\mathfrak{J}_{c,d}$.
\end{lemma}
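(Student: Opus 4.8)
The plan is to exploit the fact that $\sB(E)$ is a full matrix algebra over $\sB(F)$, where $F=\L^2(\mathbb T^2)$ and $E=F\oplus F\oplus F$, so that $\sB(E)\cong M_3(\sB(F))\cong M_3\otimes \sB(F)$. This algebra is generated by the system of matrix units $\{e_{ij}\otimes I_F\}_{i,j=1}^3$, spanning a copy of $M_3$, together with the commuting subalgebra $\{I_3\otimes a:a\in\sB(F)\}\cong\sB(F)$, and a unital $*$-representation is pinned down by its behaviour on these two pieces. The point is that the $M_3$ part is rigid.

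First I would restrict $\pi$ to $M_3\otimes I_F$. Since $M_3$ is a finite-dimensional simple $C^*$-algebra whose only irreducible representation is the identity on $\mathbb C^3$, every unital $*$-representation of it is an amplification of the identity; so there is a Hilbert space $\mc M$ and a unitary $W:\mathbb C^3\otimes\mc M\to\tw E$ with $W^*\pi(e_{ij}\otimes I_F)W=e_{ij}\otimes I_{\mc M}$ for all $i,j$. After this change of coordinates $\tw E$ is identified with $\mc M\oplus\mc M\oplus\mc M$ and $\pi$ amplifies the matrix units in the standard way.

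Next, because $I_3\otimes\sB(F)$ commutes with $M_3\otimes I_F$ inside $M_3(\sB(F))$ and $\pi$ is a homomorphism, $\pi(I_3\otimes\sB(F))$ lies in the commutant of $\{e_{ij}\otimes I_{\mc M}\}$ in $\sB(\mathbb C^3\otimes\mc M)$, which is precisely $I_3\otimes\sB(\mc M)$. Hence there is a map $\sigma:\sB(F)\to\sB(\mc M)$ with $W^*\pi(I_3\otimes a)W=I_3\otimes\sigma(a)$, and since $\pi$ is a unital $*$-homomorphism so is $\sigma$. Writing a general $b=(b_{ij})\in M_3(\sB(F))$ as $\sum_{i,j}(e_{ij}\otimes I_F)(I_3\otimes b_{ij})$ and applying $\pi$ shows $W^*\pi(b)W=(\sigma(b_{ij}))_{i,j}$; that is, $\pi$ is unitarily equivalent to the entrywise application of $\sigma$.

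Applying this to $\J_1$ and $\J_2$ from \eqref{eq:Jforms} (with $U_1=Z_1$, $U_2=Z_2$), whose entries are among $0,Z_1,cZ_1$ and $0,Z_2,dZ_2$ respectively, we get that $W^*\pi(\J_1)W$ and $W^*\pi(\J_2)W$ have exactly the shapes prescribed in \eqref{eq:Jforms}, with $U_1$ and $U_2$ replaced by $\sigma(Z_1)$ and $\sigma(Z_2)$. Since $Z_1$ and $Z_2$ are commuting unitaries on $\L^2(\mathbb T^2)$ and $\sigma$ is a unital $*$-homomorphism, $\sigma(Z_1)$ and $\sigma(Z_2)$ are commuting unitaries on $\mc M$. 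Therefore $(J_1,J_2)=(\pi(\J_1),\pi(\J_2))$ is, up to the unitary equivalence given by $W$, of the form \eqref{eq:Jforms} and hence lies in $\mathfrak J_{c,d}$. The only subtlety worth spelling out is that the single unitary $W$ simultaneously puts \emph{both} operators into block form with respect to the \emph{same} decomposition $\mc M^{(3)}$; this is automatic because $W$ diagonalizes the whole matrix-unit system at once. There is no analytic obstacle here: normality of $\pi$ is never needed, since the classification of representations of the finite-dimensional algebra $M_3$ carries the argument and $\sigma$ is only required to be an honest unital $*$-homomorphism.
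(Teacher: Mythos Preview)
Your argument is correct and is genuinely different from the paper's. The paper proceeds by hand: it writes $\J_i=\mc W_i+\mc N_i$ and records a list of algebraic relations (the $\mc N_i$ are nilpotent, $\mc N_i\mc N_j=0$, $\mc N_1\mc N_1^*+\mc N_1^*\mc N_1+\mc N_2^*\mc N_2=1$, etc.), observes that the images $W_i=\pi(\mc W_i)$, $N_i=\pi(\mc N_i)$ satisfy the same relations, and then uses these relations to carve out the three orthogonal summands $\ran(N_1N_1^*)$, $\ran(N_1^*N_1)$, $\ran(N_2^*N_2)$ of $\tw E$, place the $N_i$ as off-diagonal partial isometries, and finally force the $W_i$ to be block diagonal by combining $[W_i,N_j]=0$ with unitarity. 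It is an explicit block-matrix computation that never leaves the specific operators at hand.

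Your route is the structural one: recognize $\sB(E)=M_3\otimes\sB(F)$, use that every unital $*$-representation of $M_3$ is an amplification of the identity to trivialize the matrix-unit part, read off the commutant to see that $\pi$ acts entrywise via some unital $*$-homomorphism $\sigma$ of $\sB(F)$, and then simply apply $\sigma$ to the entries of $\J_1,\J_2$. This is shorter, treats both operators at once with no case analysis, and makes transparent why a single unitary simultaneously puts $J_1$ and $J_2$ into the desired block form. The paper's computation, by contrast, is entirely self-contained and does not invoke the classification of representations of $M_n$; it would also adapt more readily if the model operators did not sit so cleanly inside a full matrix algebra. Either approach establishes the lemma.
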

\begin{proof}The proof proceeds much in the same way as it does in \cite{McCullough} but with some minor differences. 
 The following relations are evident. 
\begin{enumerate}[i)]
\item $\J=\mc{W}_i+\mc{N}_i$ where $\mc{W}_i$ is unitary, $\mc{N}_i^2=0$  for $i=1,2$ .
\item $\mc{W}_i\mc{N}_i=  \mc{N}_i\mc{W}_i$  for $i=1,2$ .
\item $\mc{N}_1\mc{N}_1^*=\mc{N}_2\mc{N}_2^*$. 
\item $\mc{N}_1\mc{N}_1^*+\mc{N}_1^*\mc{N}_1+\mc{N}_2^*\mc{N}_2=1$.
\item $\mc{N}_i\mc{N}_j=0$ for $i,j=1,2$. 
\item $\mc{N}_i\mc{N}^*_j=0$ for $i,j=1,2$.
\end{enumerate}
 From these relations, 
\[\mc{N}_1^*\mc{N}_1,\] 
\[\mc{N}_2^*\mc{N}_2,\]
\[\mc{N}_1\mc{N}_1^*=\mc{N}_2\mc{N}_2^*\]
 are pairwise orthogonal projections. Let $J_i=\pi(\J_i),$  $N_i=\pi(\mc{N}_i)$, and $W_i=\pi(\mc{W}_i)$ for $i=1,2$.
These must satisfy the same algebraic relations, i.e.
\begin{enumerate}[i)]
\item $J={W}_i+{N}_i$ where ${W}_i$ is unitary, ${N}_i^2=0$  for $i=1,2$ .
\item ${W}_i{N}_i=  {N}_i{W}_i$  for $i=1,2$ .
\item ${N}_1{N}_1^*={N}_2{N}_2^*$. 
\item ${N}_1{N}_1^*+{N}_1^*{N}_1+{N}_2^*{N}_2=1$.
\item ${N}_i{N}_j=0$ for $i,j=1,2$. 
\item ${N}_i{N}^*_j=0$ for $i,j=1,2$.
\end{enumerate}
 From these relations, 
\[{N}_1^*{N}_1,\] 
\[{N}_2^*{N}_2,\]
\[{N}_1{N}_1^*={N}_2{N}_2^*\]
are pairwise orthogonal projections on $\tw{E}$. For instance, 
\[N_1^*N_1=N_1^*({N}_1^*{N}_1+ {N}_2^*{N}_2+{N}_1{N}_1^*)N_1=(N_1^*N_1)^2.\]
Now decompose the space $H$ as $H=\ran({N}_1{N}_1^*)\oplus\ran({N}_1^*{N}_1)\oplus \ran({N}_2^*{N}_2)$.The mappings $N_j$ are unitary maps $Q_j$ from the range of $N_j^*$ to the range of $N_j$. Hence, with respect to the orthogonal decomposition
 of $H$ as $H=\ran({N}_1{N}_1^*)\oplus\ran({N}_1^*{N}_1)\oplus \ran({N}_2^*{N}_2)$,
\[
 N_1 = \begin{pmatrix} 0&Q_1 & 0\\ 0 & 0& 0 \\ 0&0&0\end{pmatrix}
\]
 and likewise,
\[
 N_2 = \begin{pmatrix} 0& 0 & Q_2\\ 0 & 0& 0 \\ 0&0&0\end{pmatrix}.
\]
 Thus, up to unitary equivalence, it may be assumed that $Q_j=I$ (and each of the summands in the direct sum decomposition is the same Hilbert space).
Write 
\[W_1=\begin{pmatrix}A_1&B_1 &C_1\\ D_1& E_1 & F_1 \\ G_1& H_1& J_1\end{pmatrix}\]
for some $A_1$, $B_1$, $C_1$, $D_1$, $E_1$, $F_1$, $G_1$, $H_1$, and $J_1$ operators. 
Since $W_1N_1=N_1W_1$, 
\[W_1N_1=\begin{pmatrix}A_1&B_1 &C_1\\ D_1& E_1 & F_1 \\ G_1& H_1& J_1\end{pmatrix}\begin{pmatrix}0& I & 0\\ 0 & 0 & 0\\0 & 0 & 0\end{pmatrix}=\begin{pmatrix}0&A_1 &0\\ 0& D_1 &0 \\0& G_1& 0\end{pmatrix},\]
and
\[N_1W_1=\begin{pmatrix}0& I & 0\\ 0 & 0 & 0\\0 & 0 & 0\end{pmatrix}\begin{pmatrix}A_1&B_1 &C_1\\ D_1& E_1 & F_1 \\ G_1& H_1& J_1\end{pmatrix}=\begin{pmatrix} D_1& E_1 & F_1\\ 0 & 0 & 0\\0 & 0 & 0\end{pmatrix},\]
we conclude
\[A_1=E_1\]
and
\[D_1=F_1=G_1=0.\]
Similarly, since $W_1N_2=N_2W_1$, 
\[A_1=J_1\]  and 
\[H_1=0.\]
Hence 
\[W_1=\begin{pmatrix}A_1&B_1 &C_1\\ 0& A_1 & 0 \\ 0& 0& A_1\end{pmatrix}.\]
Since $W_1$ is a unitary operator,  
\[W_1W_1^*=\begin{pmatrix}A_1&B_1 &C_1\\ 0& A_1 & 0 \\ 0& 0& A_1\end{pmatrix}\begin{pmatrix}A_1^*&0 &0\\B_1^*& A_1^* & 0 \\ C_1^*& 0& A_1^*\end{pmatrix}=\begin{pmatrix}I&0 &0\\ 0& I & 0 \\ 0& 0& I\end{pmatrix},\]
where $I$ is the identity operator. Hence,
\[A_1A_1^*+B_1B_1^*+C_1C_1^*=I,\]
\[A_1A_1^*=I,\]
\[A_1B_1^*=0,\]
\[A_1C_1^*=0.\]
Note that the {\it first two} relations above show that $B_1=C_1=0$ and $A_1^*$ is an isometry. Hence $W$ is diagonal with $A_1$ down the diagonal. Since $W$ is unitary, $A_1$ is unitary. It follows that 
\[W_1=\begin{pmatrix}U_1& 0 & 0\\ 0 & U_1 & 0\\0 & 0 & U_1\end{pmatrix},\]
where $U_1$ is a unitary operator. A similar argument shows that
\[ W_2=\begin{pmatrix}U_2& 0 &0\\ 0 & U_2 & 0\\0 & 0 & U_2\end{pmatrix},\]
where $U_2$ is a unitary operator. Since $[W_1,W_2]=0,$ it follows  that $[U_1,U_2]=0$. 
 Hence, up to unitary equivalence, the  $J_i$ have the form claimed.
\end{proof}
The forward direction of the main theorem has been established. We now need only to prove that lifting implies factorization of the associated operator pencil. However, this is readily established. If  $T=(T_1,T_2)$ lifts to $J=(J_1J_2)$, then 
\[V^* \big (Q_{J_1,J_2}(\alpha,\beta)-\frac{1}{c^2}B_{2,0}(J)-\frac{1}{d^2}B_{0,2}(J)\big )V=Q_{T_1,T_2}(\alpha,\beta)-\frac{1}{c^2}B_{2,0}(T)-\frac{1}{d^2}B_{0,2}(T).\]
Hence any factorization of 
\[\hat{Q}_J(\alpha,\beta)=(K_0+\alpha K_1+\beta K_2)^*(K_0+\alpha K_1+\beta K_2)\] gives the factorization of $\hat{Q}_T$ as
\[Q_T(\alpha,\beta)=V^*(K_0+\alpha K_1+\beta K_2)^*(K_0+\alpha K_1+\beta K_2)V.\]
Since  $\hat{Q}_J$ factors as
\[\hat{Q}_{J}(\alpha,\beta)=\left(\begin{pmatrix}1 \\ \a c \\ \b d\end{pmatrix}^*\begin{pmatrix}1& \a c & \b d  \end{pmatrix}\right)\otimes I,\]
 the conclusion follows. 

\section{The Counter-Example}
\label{sec:counter}
This section has three parts. Let $Q(\alpha,\beta)$ be an arbitrary two variable quadractic pencil
\begin{equation}\label{eq:Q} Q(\alpha,\beta)=I+\sum_{0< j+k\leq 2}\alpha^j\beta^kB_{j,k}\end{equation}
with coefficients $B_{j,k}$ operators on a separable Hilbert space $H$ such that
\begin{equation}\label{cdcondition}\hat{Q}(\alpha,\beta)=Q(\alpha,\beta)-\frac{1}{c^2}B_{2,0}-\frac{1}{d^2}B_{0,2}\succeq 0
\end{equation}
for all $(\alpha,\beta)\in \mathbb{R}^2$. In the first part we show by construction there exists a commuting 2-tuple of $3$-isometries $T\in \mathfrak{F}_{cd}$ such that $\hat{Q}_T$ factors if and only if $\hat{Q}$ factors. In the second part we show that given a positive integer $n$ and  positive map $\phi:\text{Sym}_3(\mathbb{C})\rightarrow M_n,$  if the canonical quadratic pencil it determines factors, then $\phi$  is completely positive. Hence, an example of Choi \cite{Choi} of a positive $\phi:\text{Sym}_3\rightarrow M_n$ which is not completely positive produces a quadratic two variable pencil which does not factor which in turn produces a counter-example to a natural generalization of the main lifting result of
 \cite{McCullough}. This counter-example is strengthened in the last part. 
\subsection{Constructing Three Isometries.}
\ \\
Let $F$ be a  vector space with  basis $\{f_j: j\in\mathbb Z\}$.  In particular,
 the set $\{f_j\otimes f_k : j,k \in\mathbb Z\}$ is a basis for the tensor product $F\otimes F$.
Define, on the algebraic tensor product $H\otimes (F\otimes F)$ the sesquilinear form
\[\ipsq{\hef}{\hefp}=\left\{ \begin{array}{ll} \ip{Q(j,k) h}{h^\prime}_H & \text{ if }j=j^\prime \text{ and }k=k^\prime \\ \ \ \ \ \ \ 0 &\text{ otherwise }\end{array}\right.,\]
and the linear maps
\begin{equation}\label{eq:T}T(\hef)=h\otimes f_{j+1}\otimes f_k\end{equation}
and
\begin{equation}\label{eq:S}S(\hef)=h\otimes f_j \otimes f_{k+1}.\end{equation}
Note that this sesquilinear form is positive semi-definite since $Q$ takes, by hypothesis, positive semi-definite values. Let $\mc{H}$ be the Hilbert space obtained from $H\otimes F\otimes F$ by modding out by the null vectors and forming the completion. We continue to denote the inner product on $\mc{H}$ by $\ipsq{\cdot}{\cdot}$ and let $h\otimes f_j\otimes f_k$ denote the equivalence
 class it represents in the quotient. We use freely the fact that $\mathcal D$, the linear span of $\{h\otimes f_j\otimes f_k: j,k\in\mathbb Z, \ \  h\in H\},$ is dense in $\mc{H}$.

\begin{prop}\label{shiftconstruct}
Given a 2-variable pencil in the form defined by \eqref{eq:Q}, if there exists  $c,d\in \mathbb{R}$ such that $c>0$, $d>0$ and 
\[Q(\a,\b)-\frac{1}{c^2}B_{2,0}-\frac{1}{d^2}B_{0,2}\succeq 0 \] for all $(\alpha,\beta)\in \mathbb{R}^2,$ then the operators $S$ and $T$ defined in \eqref{eq:T} and \eqref{eq:S} are well defined and extend to invertible bounded operators $\mc{H}$. Moreover $S$ and $T$ are 3-isometries  and
\[\ip{\hat{Q}_{T,S}(\alpha,\beta)(\hef)}{\gab}=\delta_{(j,k),(a,b)}\ip{\hat{Q}(\alpha+j, \beta+k)h}{h}_H,\]
 where $\delta$ is the Krocker delta function.
In particular, $(S,T)$ is in the class  $\mathfrak{F}_{c,d}.$ %
\end{prop}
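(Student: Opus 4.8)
The plan is to verify, in order: (1) that $T$ and $S$ are well defined, bounded, and invertible on $\mathcal H$; (2) that $(T,S)$ is a commuting $2$-tuple of $3$-isometries, by computing its quadratic pencil explicitly; (3) the displayed identity for $\hat{Q}_{T,S}$, from which positivity — and hence membership in $\mathfrak{F}_{c,d}$ — will follow. Throughout one works on the dense subspace $\mathcal D=\operatorname{span}\{h\otimes f_j\otimes f_k\}$, on which the form is block-diagonal in the index $(j,k)$, namely $\|\sum h_{j,k}\otimes f_j\otimes f_k\|^2=\sum_{j,k}\langle Q(j,k)h_{j,k},h_{j,k}\rangle_H$, and $T$, $S$ act as the index shifts $(j,k)\mapsto(j+1,k)$ and $(j,k)\mapsto(j,k+1)$.

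For (1), first record that $B_{2,0},B_{0,2}\succeq 0$ (divide $\hat{Q}(\alpha,0)\succeq 0$ by $\alpha^2$ and let $\alpha\to\infty$, similarly in $\beta$), so $Q(\alpha,\beta)=\hat{Q}(\alpha,\beta)+\tfrac1{c^2}B_{2,0}+\tfrac1{d^2}B_{0,2}\succeq 0$. The crux is the uniform comparison estimate
\[
Q(j\pm 1,k)\ \preceq\ \big(1+\max(1,2c^2)\big)\,Q(j,k),\qquad Q(j,k\pm 1)\ \preceq\ \big(1+\max(1,2d^2)\big)\,Q(j,k)
\]
for all $(j,k)\in\mathbb Z^2$. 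To prove the first, fix $h\in H$ and form the scalar quadratic $\varphi(t)=\langle\hat{Q}(j+t,k)h,h\rangle=q_0+q_1t+q_2t^2$, where $q_0=\langle\hat{Q}(j,k)h,h\rangle\ge 0$ and $q_2=\langle B_{2,0}h,h\rangle\ge 0$; since $\varphi(t)\ge 0$ for all real $t$ by hypothesis, its discriminant is nonpositive, so $q_1^2\le 4q_0q_2$ and hence $|q_1|\le q_0+q_2$. As $\langle(Q(j\pm 1,k)-Q(j,k))h,h\rangle=\pm q_1+q_2\le q_0+2q_2$ while $\langle Q(j,k)h,h\rangle\ge q_0+\tfrac1{c^2}q_2$ (using $B_{0,2}\succeq 0$), the stated bound follows; the estimate for $S$ is the mirror image with $B_{0,2},d$ in place of $B_{2,0},c$. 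These inequalities show simultaneously that $T$, $S$, and the reverse shifts $R_T:h\otimes f_j\otimes f_k\mapsto h\otimes f_{j-1}\otimes f_k$, $R_S$ send null vectors to null vectors (so descend to $\mathcal H$) and are bounded; since $TR_T=R_TT=I$ and $SR_S=R_SS=I$ on $\mathcal D$, the operators $T$ and $S$ are bounded and invertible, with $T^{-1}=R_T$, $S^{-1}=R_S$, and they commute because $TS=ST$ on $\mathcal D$.

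For (2), since $T,S$ commute and are invertible, $P(n,m):=S^{*m}T^{*n}T^nS^m=(T^nS^m)^*(T^nS^m)$ for all $n,m\in\mathbb Z$, so for diagonal vectors $v=h\otimes f_j\otimes f_k$ and $w=g\otimes f_a\otimes f_b$ one has
\[
\langle P(n,m)v,w\rangle=\langle h\otimes f_{j+n}\otimes f_{k+m},\,g\otimes f_{a+n}\otimes f_{b+m}\rangle=\delta_{(j,k),(a,b)}\,\langle Q(n+j,m+k)h,g\rangle,
\]
a polynomial in $(n,m)$ of total degree at most $2$. Let $D_{p,q}$ ($0\le p+q\le 2$) be the bounded operators produced from the values $P(n,m)$ at the six nodes $(0,0),(1,0),(2,0),(0,1),(0,2),(1,1)$ by the standard bivariate finite-difference formulas (for instance $D_{0,0}=I$ and $2D_{2,0}=P(2,0)-2P(1,0)+P(0,0)$), these nodes being unisolvent for interpolation of degree $\le 2$. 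Then $P(n,m)=\sum_{0\le p+q\le 2}n^pm^qD_{p,q}$ for all $n,m\in\mathbb Z$: pairing both sides with diagonal $v,w$, each side equals the unique polynomial of degree $\le 2$ in $(n,m)$ that agrees with $(n,m)\mapsto\delta_{(j,k),(a,b)}\langle Q(n+j,m+k)h,g\rangle$ at those six nodes, and diagonal vectors are dense in $\mathcal H$. Hence $(T,S)$ is a commuting $2$-tuple of $3$-isometries with $B_{p,q}(T,S)=D_{p,q}$; restricting to $m=0$ (resp. $n=0$) shows $T^{*n}T^n$ (resp. $S^{*m}S^m$) is quadratic in $n$ (resp. $m$), so $T$ and $S$ are $3$-isometries.

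For (3), the $\alpha^2$- and $\beta^2$-coefficients of $Q(\alpha+j,\beta+k)$ are $B_{2,0}$ and $B_{0,2}$, so $\langle D_{2,0}v,w\rangle=\delta_{(j,k),(a,b)}\langle B_{2,0}h,g\rangle$ and $\langle D_{0,2}v,w\rangle=\delta_{(j,k),(a,b)}\langle B_{0,2}h,g\rangle$, and therefore
\[
\langle\hat{Q}_{T,S}(\alpha,\beta)(h\otimes f_j\otimes f_k),\,g\otimes f_a\otimes f_b\rangle=\delta_{(j,k),(a,b)}\big\langle\big(Q(\alpha+j,\beta+k)-\tfrac1{c^2}B_{2,0}-\tfrac1{d^2}B_{0,2}\big)h,g\big\rangle=\delta_{(j,k),(a,b)}\,\langle\hat{Q}(\alpha+j,\beta+k)h,g\rangle,
\]
which is the asserted formula. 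Summing this over a finite diagonal vector $v=\sum h_{j,k}\otimes f_j\otimes f_k\in\mathcal D$ gives $\langle\hat{Q}_{T,S}(\alpha,\beta)v,v\rangle=\sum_{j,k}\langle\hat{Q}(\alpha+j,\beta+k)h_{j,k},h_{j,k}\rangle\ge 0$, since $\hat{Q}\succeq 0$ everywhere on $\mathbb R^2$ by hypothesis; density of $\mathcal D$ and boundedness of $\hat{Q}_{T,S}(\alpha,\beta)$ then yield $\hat{Q}_{T,S}(\alpha,\beta)\succeq 0$ for every $(\alpha,\beta)\in\mathbb R^2$, i.e. $(S,T)\in\mathfrak{F}_{c,d}$. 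The only genuinely delicate point is the uniform comparison estimate in step (1) — the source of boundedness of $T$ and $S$; everything after it is interpolation bookkeeping together with a routine density argument.
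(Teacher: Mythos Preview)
Your argument is correct and follows the same overall architecture as the paper's proof: exploit the block-diagonal form $[\cdot,\cdot]$ in the index $(j,k)$, bound the shifts by comparing $Q(j\pm 1,k)$ with $Q(j,k)$, and then read off the pencil of $(T,S)$ from the identity $\langle P(n,m)v,w\rangle=\delta_{(j,k),(a,b)}\langle Q(n+j,m+k)h,g\rangle$. The differences are in execution rather than strategy. For boundedness the paper performs a direct algebraic reduction, showing $2(1+c^2)[h,h]-[Th,Th]=\langle(Q(j-1,k)+2c^2Q(j,k)-2B_{2,0})\hat h,\hat h\rangle\ge 0$; you instead use the discriminant of the scalar quadratic $t\mapsto\langle\hat Q(j+t,k)h,h\rangle$, which yields the two-sided estimate $Q(j\pm1,k)\preceq(1+\max(1,2c^2))Q(j,k)$ in one stroke and thereby handles invertibility (boundedness of the backward shifts) explicitly---a point the paper passes over. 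For the pencil, the paper computes each $\tilde B_{i,j}$ by hand via equations \eqref{eq:BoneT}--\eqref{eq:SBtwoT} and then assembles $S^{*m}T^{*n}T^nS^m$; you observe that $(n,m)\mapsto\langle P(n,m)v,w\rangle$ is a degree-$\le2$ polynomial and recover the coefficients by interpolation at six unisolvent nodes, which is tidier and avoids the intermediate identities. Either route gives the displayed formula for $\hat Q_{T,S}$ and the membership $(S,T)\in\mathfrak F_{c,d}$.
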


\begin{proof}
Let $h=\hat{h}\otimes f_j\otimes f_k$ be an elementary tensor and 
 compute,
\begin{align*}
2(1+c^2)[h,h]-[Th,Th]&=\ip{(2Q(j,k)+2c^2Q(j,k)-Q(j+1,k))\hat{h}}{\hat{h}}\\
&=\ip{(Q(j,k)+2c^2Q(j,k)-B_{0,1}-kB_{1,1}-2jB_{2,0}-B_{2,0})\hat{h}}{\hat{h}}\\
&=\ip{(Q(j,k)+2c^2Q(j,k)-B_{0,1}-kB_{1,1}-2jB_{2,0}+B_{2,0}-2B_{2,0})\hat{h}}{\hat{h}}\\
&=\ip{(Q(j-1,k)+2c^2Q(j,k)-2B_{2,0})\hat{h}}{\hat{h}}.\\
\end{align*}
Since $Q(\a,\b)-\frac{1}{c^2}B_{2,0}-\frac{1}{d^2}B_{0,2}\succeq 0$ for all $(\alpha,\beta)\in \mathbb{R}^2,$  certainly $Q-\frac{1}{c^2}B_{2,0}\succeq 0$ and $Q\succeq 0$ for all $(\alpha,\beta)\in \mathbb{R}^2$.
Hence, $[2(1+c^2)[h,h]-[Th,Th]\geq0.$
Using orthogonality of the subspaces $\{h\otimes f_j\otimes f_k: h\in H\}$ for $j,k\in\mathbb Z$, it follows that for each $h\in H\otimes F\otimes F,$
\[
 2(1+c^2)[h,h]\ge [Th,Th].
\]
Thus $T$ is bounded on the algebraic tensor product and thus extends to a bounded operator, still denoted by $T$,  on $\mc{H}$ by continuity. A similar computation shows that $S$ is also bounded. 

It is straightforward to verify that
\[
 T^{*3}T^3 -3 T^{*2}T^2 + 3T^*T-I =0,
\]
 a condition well known to be equivalent to $T$ being a 3-isometry \cite{AglerStankusI,McCullough}.
 Likewise $S$ is a 3-isometry.  
Since $S$ and $T$ are 3-isometries there exist $\Bone{T}$, $\Bone{S}$, $\Btwo{T}$ and $\Btwo{S}$ such that for all natural numbers $m$ and $n$, 
\[S^{*m}S^m=I+m\Bone{S}+m^2\Btwo{S}\]
\[T^{*n}T^n=I+n\Bone{T}+n^2\Btwo{T}.\]
Define, $\tilde{B}_{1,0}=\Bone{T}$, $\tilde{B}_{0,1}=\Bone{S}$, $\tilde{B}_{2,0}=\Btwo{T}$, $\tilde{B}_{0,2}=\Btwo{S},$ and 
\begin{equation}\label{eq:B11}\tilde{B}_{1,1}= B_{1,1}\otimes I\otimes I. \end{equation}
 Direct computation shows
\begin{equation}\label{eq:BoneT}\ipsq{\Bone{T}(\hef)}{(\hab)}=\delta_{(j,k),(a,b)}\ip{(B_{1,0}+kB_{1,1}+2jB_{2,0})h}{h}_H,\end{equation}
\begin{equation}\label{eq:BoneS}\ipsq{\Bone{S}(\hef)}{\gab}=\delta_{(j,k),(a,b)}\ip{(B_{0,1}+jB_{1,1}+2kB_{0,2})h}{g}_H,\end{equation}
\begin{equation}\label{eq:BtwoT}\ipsq{\Btwo{T}(\hef)}{\gab}=\delta_{(j,k),(a,b)}\ip{B_{2,0}h}{g}_H,\end{equation}
\begin{equation}\label{eq:BtwoS}\ipsq{\Btwo{S}(\hef)}{\gab}=\delta_{(j,k),(a,b)}\ip{B_{0,2}h}{g}_H.\end{equation}
By the definition of $B_{1,1},$ 
\begin{equation}\label{eq:B11def}\ipsq{\tilde{B}_{1,1}(\hef)}{\gab}= \delta_{(j,k),(a,b)}\ip{B_{1,1} h}{g}.\end{equation}
From the above equations it follows that 
\begin{equation}\label{mixedterm}\ipsq{(S^{*m}\Bone{T}S^m)\hef}{\gab}=\delta_{(j,k),(a,b)}\ip{(B_{1,0}+(k+m)B_{1,1}+2jB_{2,0})h}{g}_H.\end{equation}
Likewise, 
\begin{equation}
 \label{eq:SBtwoT}
  \ipsq{S^{*m}\Btwo{T}S^m (\hef)}{\gab}=\delta_{(j,k),(a,b)} \ip{B_{2,0} h}{g}.
\end{equation}
Hence,  by equations \eqref{eq:BoneT},\eqref{eq:BoneS},\eqref{eq:BtwoT}, \eqref{eq:BtwoS}, \eqref{eq:B11def}, \eqref{mixedterm}, and \eqref{eq:SBtwoT}, 
\begin{align*}
&\ipsq{(S^{*m}T^{*n}T^nS^m)(\hef)}{\gab}\\
&=\ipsq{S^{*m}(1+nB_1(T)+n^2B_2(T))S^m (\hef)}{\gab}\\
&=\ipsq{I+mB_1(S)+m^2B_2(S)+nS^{*m}B_1(T)S^m+n^2B_2(T)(\hef)}{\gab}\\
&=\ipsq{(I+m\tilde{B}_{0,1}+n\tilde{B}_{1,0}+mn\tilde{B}_{1,1}+m^2\tilde{B}_{0,2}+n^2\tilde{B}_{2,0})(\hef)}{(\gab)}.
\end{align*}

We conclude,
\[Q_{T,S}(\alpha,\beta)=I+\alpha\tilde{B}_{1,0}+\beta\tilde{B}_{0,1}+\alpha\beta\tilde{B}_{1,1}+\alpha^2\tilde{B}_{2,0}+\beta^2\tilde{B}_{0,2}\]
The above equations give the following relationship
\[\ip{Q_{T,S}(\alpha,\beta)(\hef)}{\gab}=\delta_{(j,k),(a,b)} \ip{Q(\alpha+j, \beta+k)h}{g}_H\]
and 
\[\ip{\hat{Q}_{T,S}(\alpha,\beta)(\hef)}{\gab}= \delta_{(j,k),(a,b)} \ip{\hat{Q}(\alpha+j, \beta+k)h}{g}_H.\]

\end{proof}
\begin{prop}\label{bothfactor}
Let $Q(\alpha,\beta)$ be a quadratic pencil of the form \eqref{eq:Q} satisfying the positivity condition \eqref{cdcondition} and let $Q_{T,S}(\alpha,\beta)$ be the quadratic pencil for the 3-isometric 2-tuple $(T,S)\in \mathfrak{F}_{cd}$ constructed in Proposition (\ref{shiftconstruct}). The modified pencil $\hat{Q}(\alpha,\beta)$ factors if and only if the modified pencil $\hat{Q}_{T,S}(\alpha,\beta)$ factors.
\end{prop}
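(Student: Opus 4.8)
The plan is to transfer factorizations through the explicit block structure of $\hat Q_{T,S}$ recorded in Proposition~\ref{shiftconstruct}. Write $\mathcal H=\bigoplus_{(j,k)\in\mathbb Z^2}\mathcal H_{j,k}$, where $\mathcal H_{j,k}$ is the closure in $\mathcal H$ of $\{h\otimes f_j\otimes f_k:h\in H\}$; the subspaces $\mathcal H_{j,k}$ are mutually orthogonal and their union spans the dense subspace $\mathcal D$. For each $(j,k)$ let $G_{j,k}\colon H\to\mathcal H$ be the bounded linear map $G_{j,k}h=h\otimes f_j\otimes f_k$, so that $G_{j,k}^*G_{j,k}=Q(j,k)$ and, by the last display of Proposition~\ref{shiftconstruct}, $G_{j,k}^*\,\hat Q_{T,S}(\alpha,\beta)\,G_{j,k}=\hat Q(\alpha+j,\beta+k)$ on $H$ for all real $\alpha,\beta$. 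Since $Q(0,0)=I$, the map $G:=G_{0,0}$ is an isometry.

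The easy direction is: if $\hat Q_{T,S}$ factors, say $\hat Q_{T,S}(\alpha,\beta)=(K_0+\alpha K_1+\beta K_2)^*(K_0+\alpha K_1+\beta K_2)$, then compressing to the $(0,0)$ block through the isometry $G$ gives
\[
\hat Q(\alpha,\beta)=G^*\hat Q_{T,S}(\alpha,\beta)G=(K_0G+\alpha K_1G+\beta K_2G)^*(K_0G+\alpha K_1G+\beta K_2G),
\]
a factorization of $\hat Q$ of the required affine shape (with Remark~\ref{remark} available to move the target back to $H$).

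For the converse I would suppose $\hat Q(\alpha,\beta)=L(\alpha,\beta)^*L(\alpha,\beta)$ with $L(\alpha,\beta)=V_0+\alpha V_1+\beta V_2$, $V_i\colon H\to\mathcal H'$. Comparing the $\alpha^2$- and $\beta^2$-coefficients gives $V_1^*V_1=B_{2,0}$ and $V_2^*V_2=B_{0,2}$, and evaluating $L^*L$ at $(j,k)$ gives $L(j,k)^*L(j,k)=\hat Q(j,k)$. On $\mathcal D$ define maps into $\mathcal H'\otimes\ell^2(\mathbb Z^2)$ (with $\{e_{j,k}\}$ the standard basis) by
\[
\Lambda_0(h\otimes f_j\otimes f_k)=(L(j,k)h)\otimes e_{j,k},\qquad \Lambda_i(h\otimes f_j\otimes f_k)=(V_ih)\otimes e_{j,k}\quad(i=1,2),
\]
extended linearly. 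Granting that these descend to bounded operators on $\mathcal H$ (the point addressed below), the identity $L(j,k)+\alpha V_1+\beta V_2=L(\alpha+j,\beta+k)$ together with the last display of Proposition~\ref{shiftconstruct} yields, for elementary tensors,
\[
\langle(\Lambda_0+\alpha\Lambda_1+\beta\Lambda_2)(h\otimes f_j\otimes f_k),(\Lambda_0+\alpha\Lambda_1+\beta\Lambda_2)(g\otimes f_a\otimes f_b)\rangle=\langle\hat Q_{T,S}(\alpha,\beta)(h\otimes f_j\otimes f_k),g\otimes f_a\otimes f_b\rangle;
\]
by density this forces $\hat Q_{T,S}(\alpha,\beta)=(\Lambda_0+\alpha\Lambda_1+\beta\Lambda_2)^*(\Lambda_0+\alpha\Lambda_1+\beta\Lambda_2)$, and Remark~\ref{remark} upgrades it to a factorization with coefficients in $\sB(\mathcal H)$.

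The main obstacle is exactly the boundedness (and well-definedness) of $\Lambda_0,\Lambda_1,\Lambda_2$: the crude bound $\|\Lambda_1(h\otimes f_j\otimes f_k)\|^2=\langle B_{2,0}h,h\rangle_H\le\|B_{2,0}\|\,\|h\|_H^2$ is useless, because the norm on the block $\mathcal H_{j,k}$ is the deformed quantity $\langle Q(j,k)h,h\rangle_H$, which can be far smaller than $\|h\|_H^2$; the correct comparison must be with the quadratic form of $Q(j,k)$ itself, and it has to be uniform in $(j,k)$. This uniformity is furnished by Proposition~\ref{shiftconstruct}: since $\Btwo{T}$ and $\Btwo{S}$ are bounded positive operators on $\mathcal H$ (being the leading coefficients of the positive pencils $T^{*n}T^n$, $S^{*n}S^n$) and $G_{j,k}^*\Btwo{T}G_{j,k}=B_{2,0}$, $G_{j,k}^*\Btwo{S}G_{j,k}=B_{0,2}$ by \eqref{eq:BtwoT}--\eqref{eq:BtwoS}, one gets $0\preceq B_{2,0}\preceq\|\Btwo{T}\|\,Q(j,k)$ and $0\preceq B_{0,2}\preceq\|\Btwo{S}\|\,Q(j,k)$; and since $Q(j,k)-\hat Q(j,k)=\tfrac1{c^2}B_{2,0}+\tfrac1{d^2}B_{0,2}\succeq0$ while $\hat Q(j,k)=G_{j,k}^*\hat Q_{T,S}(0,0)G_{j,k}\succeq0$, one gets $0\preceq\hat Q(j,k)\preceq Q(j,k)$. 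Combined with the orthogonality of the blocks, these inequalities show that each $\Lambda_\ell$ annihilates the null vectors and extends to a bounded operator on $\mathcal H$, which completes the argument.
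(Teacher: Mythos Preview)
Your argument is correct and follows essentially the same route as the paper: compress through the isometry $h\mapsto h\otimes f_0\otimes f_0$ for one direction, and for the converse build the block-diagonal operators $\Lambda_\ell$ (the paper's $W_\ell$) on $\mathcal D$ and check boundedness via a uniform comparison with $Q(j,k)$. The only cosmetic difference is in the bound for $\Lambda_1,\Lambda_2$: the paper uses the hypothesis $\hat Q\succeq 0$ directly to get $B_{2,0}\preceq c^2 Q(j,k)$ (and $B_{0,2}\preceq d^2 Q(j,k)$), which is a shorter path than routing through $\Btwo{T}$ on $\mathcal H$; your detour is valid but unnecessary.
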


\begin{proof}
By the conclusion of Proposition (\ref{shiftconstruct}), 
\[\ip{\hat{Q}_{T,S}(\alpha,\beta)(\hef)}{(\heg)}=\delta_{(j,k),(a,b)} \ip{\hat{Q}(\alpha+j, \beta+k)h}{g}_H.\]
Suppose $\hat{Q}_{T,S}(\alpha,\beta)$ factors as 
\[\hat{Q}_{T,S}(\alpha,\beta)=(V_0+\alpha V_1+\beta V_2)^*(V_0+\alpha V_1+\beta V_2)\]
where $V_j$ are bounded operators from $\mc{H}$ into some auxiliary Hilbert space.
 Define  $U:H\rightarrow \mc{H}$ by
\begin{equation}
Uh=(h\otimes f_0 \otimes f_0).
\end{equation}
To verify that $U$ is an isometry,  note 
\[\|Uh\|=\|h\otimes f_0\otimes f_0\|=\|Q(0,0)^{\frac 12} h\|=\|h\|.\]
Now for all $g,h\in H$ 
\begin{align*}
\langle U^*(V_0+\alpha V_1+\beta V_2)^*&(V_0+\alpha V_1+\beta V_2)U h,g\rangle\\
=&\ip{U^*\hat{Q}_{T,S}(\alpha,\beta)U h}{g}\\
=&\ip{\hat{Q}_{T,S}(\alpha,\beta)Uh}{Ug}\\
=&\ip{\hat{Q}_{T,S}(\alpha,\beta)(h\otimes f_0\otimes f_0)}{(g\otimes f_0\otimes f_0)}\\
=&\ip{\hat{Q}(\alpha,\beta)h}{g}.
\end{align*}
 Thus, $\hat{Q}$ factors as
\[\hat{Q}(\alpha,\beta)=[(V_0+\alpha V_1+\beta V_2)U]^*\, [(V_0+\alpha V_1+\beta V_2)U].\]

Conversely, suppose that $\hat{Q}(\alpha,\beta)$ factors as 
\[\hat{Q}(\alpha,\beta)=(V_0+\alpha V_1+\beta V_2)^*(V_0+\alpha V_1+\beta V_2)\]
where the $V_j$ are bounded operators from $H$ into an auxiliary Hilbert space, which we label $K$ for convenience. 
 Let $\ell^2$ denote the Hilbert space $\ell^2(\mathbb Z)$ with the standard orthonormal basis $\{e_j: j\in\mathbb Z\}$
 and let $\mc{K}$ denote the Hilbert space tensor product $K\otimes (\ell^2 \otimes \ell^2)$. 
  Define, on the dense set $\mathcal D$,  equal to the span of elementary tensors $h\otimes f_j\otimes f_k$,  of $\mc{H}$ into $\mc{K}$ the linear maps,
\[
 \begin{split}
  W_0 (\sum h_{j,k}\otimes f_j\otimes f_k) = & \sum (V_0+jV_1+kV_2)h_{j,k}\otimes (e_j\otimes e_k) \\
  W_\ell (\sum h_{j,k} \otimes f_j\otimes f_k) = & \sum  V_\ell h_{j,k} \otimes (e_j\otimes e_k),
\end{split}
\]
  for $\ell=1,2.$   Since,
\[
 \begin{split}
 \langle W_0 (\sum h_{j,k} \otimes f_j\otimes f_k),& \, W_0 (\sum g_{a,b}\otimes f_a\otimes f_b)\rangle \\
  = & \sum_{j,k} \langle Q(j,k)h_{j,k}, \, g_{a,b} \rangle \\
  = & [ \sum h_{j,k} \otimes f_j\otimes f_k, \, \sum h_{a,b} \otimes f_a\otimes f_b ],
\end{split}
\]
 $W_0$ is an isometry on $\mathcal D$ and thus extends to an isometry, still denoted $W_0,$
 from $\mc{H}$ into $\mc{K}$.  Similarly,
\[
 \begin{split}
 \langle W_1 (\sum h_{j,k} \otimes f_j\otimes f_k),& \, W_1 (\sum h_{a,b}\otimes f_a\otimes f_b)\rangle \\
  = & \sum_{j,k} \langle V_1 h_{j,k},\, V_1 h_{j,k} \rangle  \\
  = & \sum_{j,k} \langle B_2(S^*,S) h_{j,k},\, h_{j,k} \rangle \\ 
  \le & c^2 \sum_{j,k} \langle Q(j,k)h_{j,k},\, h_{j,k} \rangle \\
  = & c^2 [\sum h_{j,k} \otimes f_j\otimes f_k,  \,  \sum h_{a,b}\otimes f_a\otimes f_b ].
\end{split}
\]
 Thus $W_1$ is bounded on $\mathcal D$ and thus extends to a bounded linear operator, still denoted
 $W_1$, from $\mc{H}$ to $\mc{K}$. Of course a similar statement holds for $W_2.$ 

Finally, 
\begin{align*}
 \langle (W_0+\alpha W_1+\beta W_2)^*&(W_0+\alpha W_1+\beta W_2)  (h_{j,k}\otimes f_j \otimes f_k),(g_{a,b}\otimes f_a \otimes f_b)\rangle \\
=&\ip{(W_0+\alpha W_1+\beta W_2)  (h_{jk}\otimes f_j\otimes f_k)}{(W_0+\alpha W_1+\beta W_2)  (h_{a,b}\otimes f_a\otimes f_b)}\\
=&\ip{(V_0+(\alpha+j)V_1+(\beta+k)V_2) h_{j,k}}{(V_0+(\alpha+j)V_1+(\beta+k)V_2) h_{a,b}}\\
=&\delta_{(j,k),(a,b)}\ip{\hat{Q}(\alpha+j,\beta+k)h_{j,k}}{h_{j,k}}\\
=&\ip{\hat{Q}_{T,S}(h_{j,k}\otimes f_j\otimes f_k)}{(h_{a,b}\otimes f_a\otimes f_b)}.
\end{align*}
Hence $\hat{Q}_{T,S}$ has the factorization $(W_0+\alpha W_1 + \beta W_2)^*(W_0+\alpha W_1 + \beta W_2)$.
\end{proof}

\subsection{A positive but not completely positive map.}
 In this section an example of Choi is used to produce a two-variable quadratic pencil
 which takes positive semidefinite values on $\mathbb R^2$, but does not factor. 
 In turn this pencil is used, in Proposition \ref{prop:counter}, to give a counter-example
 to a natural generalization of the main result of \cite{McCullough}.

\begin{definition}
 An \df{operator system} $S$ is a unital selfadjoint (vector) subspace of the bounded operators on a Hilbert space. 
Let $E_{i,j}$ denote the matrix units for $M_n$.  
The matrix 
\[ C_\phi = (\phi(E_{ij}))_{i,j} \in M_n\otimes S\] 
is the \df{Choi matrix} of the linear map $\phi:M_n\to S$. 
\end{definition}

 The following lemma can be found in \cite{Paulsen}

\begin{lemma}\label{lem:choi}
Let $S$ be an operator system. A map $\phi:M_n \rightarrow S$  is completely positive if and only if $C_\phi$ is positive semidefinite. 
\end{lemma}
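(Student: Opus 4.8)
The statement to prove is Lemma~\ref{lem:choi}: a map $\phi:M_n\to S$ into an operator system is completely positive if and only if its Choi matrix $C_\phi=(\phi(E_{ij}))_{i,j}$ is positive semidefinite. This is a standard fact (Choi's theorem in the operator-system setting), and since the excerpt remarks it ``can be found in \cite{Paulsen}'', the expected proof is a short self-contained argument rather than anything novel.

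The plan is as follows. The forward direction is immediate: if $\phi$ is completely positive, then in particular $\id_n\otimes\phi:M_n\otimes M_n\to M_n\otimes S$ is positive, and $C_\phi=(\id_n\otimes\phi)(E)$ where $E=(E_{ij})_{i,j}=\sum_{i,j}E_{ij}\otimes E_{ij}$ is $n$ times the rank-one projection onto the vector $\sum_i e_i\otimes e_i$, hence positive; so $C_\phi\succeq 0$. For the converse I would first reduce to the concrete case where $S\subseteq\sB(K)$, so that $\phi:M_n\to\sB(K)$ and $C_\phi\in M_n\otimes\sB(K)=\sB(\mathbb C^n\otimes K)$ is a genuine positive operator. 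Then write $C_\phi=\sum_{\ell} v_\ell v_\ell^*$ (or use a single positive square root) with $v_\ell\in\mathbb C^n\otimes K$, and decompose $v_\ell=\sum_{i}e_i\otimes v_\ell^{(i)}$ with $v_\ell^{(i)}\in K$; equivalently define operators $V_\ell:K\to\mathbb C^n\otimes K$ or rather $V_\ell:\mathbb C^n\to K$ by the matrix of coordinates. The key computation is that $\phi(E_{ij})=\langle C_\phi\, (e_j\otimes\,\cdot\,),\ e_i\otimes\,\cdot\,\rangle$ in the appropriate sense, which unwinds to $\phi(E_{ij})=\sum_\ell V_\ell E_{ij}V_\ell^*$ for suitable $V_\ell$ built from the $v_\ell^{(i)}$'s. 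By linearity, since the $E_{ij}$ span $M_n$, this gives $\phi(A)=\sum_\ell V_\ell A V_\ell^*$ for all $A\in M_n$.

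With $\phi$ in the form $\phi(A)=\sum_\ell V_\ell A V_\ell^*$, complete positivity is automatic: for any $m$ and any positive $X\in M_m\otimes M_n$, $(\id_m\otimes\phi)(X)=\sum_\ell(I_m\otimes V_\ell)X(I_m\otimes V_\ell)^*$, which is a sum of terms of the form $W X W^*\succeq 0$. Hence $\phi$ is completely positive. I would present the argument for a single $\ell$ (i.e.\ $C_\phi=vv^*$) and note the general case follows by summing, to keep the bookkeeping light.

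The main obstacle — really the only place requiring care — is the index bookkeeping in the identity $\phi(E_{ij})=\sum_\ell V_\ell E_{ij}V_\ell^*$: one must be careful about which tensor leg indexes the $M_n$ matrix units and which the Choi-matrix block structure, and about conjugate-linearity conventions so that the $i,j$ pattern comes out matching $E_{ij}$ rather than $E_{ji}$. Once the definition of $V_\ell$ from the block entries of the positive square root of $C_\phi$ is pinned down, everything else is a one-line verification on matrix units plus the trivial observation that conjugation by a fixed operator preserves positivity at every amplification level. I do not anticipate needing Stinespring or Arveson here; the result is elementary and direct from the positivity of $C_\phi$, so I would avoid invoking the heavier machinery stated earlier in the paper.
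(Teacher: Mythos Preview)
The paper does not actually prove this lemma; it simply cites it from \cite{Paulsen}. So there is no ``paper's own proof'' to compare against---the statement is quoted as a known result, and your proposal supplies exactly the standard argument one finds in that reference.

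Your argument is correct. One small point of care: when $K$ is infinite-dimensional, writing $C_\phi=\sum_\ell v_\ell v_\ell^*$ as a (finite or countable) sum of rank-one operators is not generally available, since $C_\phi$ need not be compact; the resulting Kraus operators $V_\ell:\mathbb C^n\to K$ would then give each $\phi(E_{ij})$ finite rank. Your parenthetical ``or use a single positive square root'' is the correct fix: with $D=C_\phi^{1/2}\in M_n(\sB(K))$ written in block form $D=(D_{ki})$, one has $\phi(E_{ij})=\sum_k D_{ki}^*D_{kj}$, which packages as $\phi(A)=V^*(A\otimes I_K)V$ for the isometry-like map $V:K\to\mathbb C^n\otimes K$, $V\xi=\sum_k e_k\otimes D_{k\,\cdot\,}\xi$ (equivalently, the $n$ Kraus operators here map $K\to K$, not $\mathbb C^n\to K$). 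With that adjustment the complete positivity is immediate, as you say, since $(\id_m\otimes\phi)(X)=(I_m\otimes V)^*(X\otimes I_K)(I_m\otimes V)$.
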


 Recall the definitions of the $3\times 3$ matrices $B_{i,j}$ from equation \eqref{defineBij}. They form a basis for $\sym_3(\mathbb C)$. 

\begin{lemma}\label{factorlemma}
Suppose $S$  is an operator system and $\phi:\sym_3(\mathbb{C})\rightarrow S$ is a unital positive linear map.
If the canonical pencil
\[\hat{Q}_{\phi}(\a,\b)=\left[I+\sum_{0< i+j\leq 2} \alpha^i\beta^j\phi(B_{i,j})\right]-\frac{1}{c^2}\phi(B_{0,2})-\frac{1}{d^2}\phi(B_{2,0})=\sum_{0\leq j+k\leq 2}\alpha^j\beta^k \phi(B_{ij})\]
associated to $\phi$  factors as
\[\hat{Q}_{\phi}(\a,\b)=(V_0+\a V_1+\b V_2)^*(V_0+\a V_1+\b V_2),\]
where the $V_j$ are operators into an auxiliary space, then the map $\phi$ is completely positive.

Conversely, if the map $\phi$ is completely positive, then $\hat{Q}_\phi$  factors. 
\end{lemma}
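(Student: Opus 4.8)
The plan is to use the correspondence between completely positive maps on the operator system $\sym_3(\mathbb C)$ and positivity of associated matrices, via Lemma \ref{lem:choi}, together with the structural facts about the matrices $B_{i,j}$ recorded in equation \eqref{defineBij} and the computation \eqref{eq:J_1J_2}. First I would establish the converse direction, which is the quick half: if $\phi$ is completely positive, then $I_3\otimes\phi$ applied to the positive matrix pencil $Q_{\J}(\a,\b)=(1,\a c,\b d)^*(1,\a c,\b d)\otimes I$ (see \eqref{eq:Q(J_1,J_2)} and the display just before Theorem \ref{liftthm}) yields a positive pencil, and then invoke the argument of Lemma \ref{factorgivespos} — or more directly Remark \ref{remark} — to conclude that a positive pencil of this shape, whose coefficients are $\phi(B_{i,j})$, factors in the required linear form. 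Alternatively, since $\hat Q_\phi(\a,\b) = (I_3\otimes\phi)$ of the rank-one-valued pencil $\bigl(\begin{smallmatrix}1\\\a c\\\b d\end{smallmatrix}\bigr)\bigl(\begin{smallmatrix}1&\a c&\b d\end{smallmatrix}\bigr)$, write $\phi(E_{ij}) = \sum_\ell W_\ell^* W_\ell$-type data from Stinespring/Choi and push through to get $V_0,V_1,V_2$ explicitly.

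For the forward (main) direction, suppose $\hat Q_\phi(\a,\b) = (V_0+\a V_1+\b V_2)^*(V_0+\a V_1+\b V_2)$. The key step is to recognize that this factorization, read coefficient-by-coefficient in $\a,\b$, forces the $6\times 6$ (or, after the reduction in Lemma \ref{factorgivespos}, effectively $3n\times 3n$) matrix assembled from the values $\phi(B_{i,j})$ to be positive semidefinite: expanding the product gives
\[
\hat Q_\phi(\a,\b) = V_0^*V_0 + \a(V_0^*V_1+V_1^*V_0) + \b(V_0^*V_2+V_2^*V_0) + \a^2 V_1^*V_1 + \a\b(V_1^*V_2+V_2^*V_1) + \b^2 V_2^*V_2,
\]
and matching with the pencil whose coefficients are the $\phi(B_{i,j})$ (with the $1/c^2,1/d^2$ corrections folded into the constant term as in \eqref{defineBij}) shows that the Gram matrix $\begin{pmatrix}V_0&V_1&V_2\end{pmatrix}^*\begin{pmatrix}V_0&V_1&V_2\end{pmatrix}$ realizes exactly the block matrix $\bigl(\phi(B_{i,j})\bigr)$ up to the invertible diagonal rescaling by $c,d$ exhibited in the proof of Lemma \ref{factorgivespos}. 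Hence that block matrix is positive. The final step is to identify this positive block matrix, after the rescaling, with the Choi matrix $C_\phi = (\phi(E_{ij}))_{i,j}$ up to a fixed invertible change of basis in $\mathbb C^3$ — because $\{B_{i,j}\}$ and $\{E_{ij}\}$ (symmetrized appropriately) are two bases of $\sym_3(\mathbb C)$ related by a fixed invertible matrix $R$, one has $C_\phi = (R\otimes I)\bigl(\phi(B_{i,j})\bigr)(R\otimes I)^*$-type identity — so $C_\phi \succeq 0$, and Lemma \ref{lem:choi} gives complete positivity of $\phi$.

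The main obstacle I anticipate is bookkeeping: getting the precise dictionary between the coefficient matrices of the factored pencil and the Choi matrix right, including the role of the $1/c^2$ and $1/d^2$ shifts in $B_{0,0}$, the fact that $\hat Q_\phi$ only sees the \emph{symmetrized} combinations of the $B_{i,j}$, and checking that the change-of-basis matrix relating $\{B_{i,j}\}$ to $\{E_{ij}\}$ (with the $c,d$-weights) is genuinely invertible so that positivity transfers in both directions. The operator-theoretic content is entirely in Lemma \ref{lem:choi} and the Gram-matrix argument already rehearsed in Lemma \ref{factorgivespos}; the new work is purely the $3\times 3$ linear algebra identifying the two matrix encodings of $\phi$. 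Once that dictionary is pinned down, both implications are immediate.
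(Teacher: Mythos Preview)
Your converse plan---factor the Choi matrix of (an extension of) $\phi$ to produce $V_0,V_1,V_2$---is exactly what the paper does.

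For the forward direction, however, the paper does \emph{not} route through the Choi matrix. It proceeds exactly as in Lemma~\ref{factorgivespos}: take any positive $X=\sum X_{i,j}\otimes B_{i,j}\in M_n\otimes\sym_3(\mathbb C)$, factor it as $(Y_0\ Y_1\ Y_2)^*(Y_0\ Y_1\ Y_2)$, and then compute directly that
\[
(I_n\otimes\phi)(X)=\bigl(Y_0\otimes V_0+Y_1\otimes V_1+Y_2\otimes V_2\bigr)^*\bigl(Y_0\otimes V_0+Y_1\otimes V_1+Y_2\otimes V_2\bigr)\succeq 0.
\]
No Choi matrix, no change of basis---just the same tensor Gram identity you already cited from Lemma~\ref{factorgivespos}.

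Your proposed route has a genuine wobble you should be aware of. The map $\phi$ is only defined on $\sym_3(\mathbb C)$, so $\phi(E_{ij})$ for $i\neq j$ is not defined, $C_\phi$ is not defined, and Lemma~\ref{lem:choi} (stated for maps out of $M_n$) does not apply as written. Your ``change of basis from $\{B_{i,j}\}$ to $\{E_{ij}\}$'' can't literally work: the nine $E_{ij}$ are not a basis of the six-dimensional space $\sym_3(\mathbb C)$. Moreover, coefficient matching only recovers the symmetric combinations $V_i^*V_j+V_j^*V_i$, so the Gram matrix does not ``realize exactly'' the block matrix built from the $\phi(B_{i,j})$; it carries strictly more information. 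The correct patch is to \emph{use} that extra information: define $\tilde\phi:M_3\to S$ by $\tilde\phi(E_{ij})=V_i^*V_j$ (with the obvious $c,d$ rescaling), observe that its Choi matrix is the Gram matrix and hence positive, conclude $\tilde\phi$ is completely positive by Lemma~\ref{lem:choi}, and then note $\tilde\phi|_{\sym_3}=\phi$. That works, and it is conceptually pleasant---it shows that a factorization of $\hat Q_\phi$ is precisely the data of a CP extension of $\phi$ to $M_3$---but once you unwind it you see it is the paper's direct computation in disguise. The paper's route simply sidesteps the domain issue.
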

\begin{proof}
Suppose that the canonical pencil factors as 
\[
 \hat{Q}_{\phi}(\a,\b) = (V_0 +\a V_1 +\b V_2)^* (V_0 +\a V_1 +\b V_2).
\]
An element $X\in M_n \otimes \sym(\mathbb{C})$ has the following form 
\[X\cong\begin{pmatrix}X_{0,0}& X_{1,0} & X_{0,1}\\ X_{1,0} & X_{2,0} & X_{1,1}\\ X_{0,1} & X_{1,1} & X_{0,2}\end{pmatrix}.\]
If $X\succeq 0,$ then each $X_{i,j}$ is self-adjoint and 
\[\begin{pmatrix}X_{0,0}& X_{1,0} & X_{0,1}\\ X_{1,0} & X_{2,0} & X_{1,1}\\ X_{0,1} & X_{1,1} & X_{0,2}\end{pmatrix}=\begin{pmatrix}Y_0\\Y_1\\Y_2\end{pmatrix}^*\begin{pmatrix}Y_0& Y_1& Y_2\end{pmatrix},
\] 
where the $Y_j$ are $3n\times n$ matrices. Thus, 
\begin{equation}
\begin{aligned}
(1_m\otimes \phi)(X)&=\sum X_{i,j}\otimes \phi(B_{i,j})\\
&=X_{0,0}\otimes V_0^*V_0+X_{1,0}\otimes (V_0^*V_1+V_1^*V_0)+X_{0,1}\otimes(V_0^*V_2+V_2^*V_0)\\
&\hspace{.5cm}+X_{1,1}\otimes(V_1^*V_2+V_2^*V_1)+X_{2,0}\otimes(V_1^*V_1)+X_{0,2}\otimes(V_2^*V_2)\\
&=(Y_0\otimes V_0+Y_1\otimes V_1+Y_2\otimes V_2)^*(Y_0\otimes V_0+Y_1\otimes V_1+Y_2\otimes V_2)\succeq 0.
\end{aligned}
\end{equation}
Hence $\phi$ is completely positive. 

We pause at this point to note some differences between the finite and infinite dimensional cases. There is a Hilbert space $\mathcal E$ such that $S\subset \sB(\mathcal E)$ and the $V_j$ map into an auxiliary Hilbert space $K$.  In fact,
\[V_j:\mathcal E \rightarrow \bigvee_{i=0}^2 \ran{V_i}.\]
 Thus, replacing $K$ by $\bigvee_{i=0}^2 \ran{V_i}$, it can be assumed that $V_j$ map into $\mathcal E^3$. Thus, if  $\mathcal E$ is finite dimensional, say $S\subset M_k$ (in which case there is no harm in assuming $S=M_k$),  then it can be assumed that $V_j$ map into an auxiliary space of dimension of at most $3k$. If $\mathcal E$ is an infinite dimensional space, then $\mathcal E^3$ can be identified with $\mathcal E$. 

Now suppose that the map $\phi:\sym_3{\mathbb{C}}\to S$ is completely positive and $S\subset \sB(\mathcal E)$.
 By Lemma \ref{lem:choi}, the Choi matrix $C_{\phi}$ is positive semidefinite and hence factors,
\[C_\phi=\begin{pmatrix}
\phi(E_{00})& \phi(E_{01})& \phi(E_{02})\\
\phi(E_{10})& \phi(E_{11})& \phi(E_{12})\\
\phi(E_{20})& \phi(E_{21})& \phi(E_{22})\\
\end{pmatrix}=\begin{pmatrix}V_0\\V_1\\V_2\end{pmatrix}^*\begin{pmatrix}V_0& V_1& V_2\end{pmatrix}
\] 
 where $V_j$ map $\mathcal E$ into an auxiliary Hilbert space.  To complete the proof, observe that
\[\hat{Q}_T(\a,\b)=(V_0+\alpha V_1+\beta V_2)^*(V_0+\alpha V_1+\beta V_2).\]
\end{proof}
We now present a map on $\sym_3(\mathbb{C})$ that is positive   but not completely positive. By Lemma \ref{factorlemma} this map produces a pencil that does not factor.

\begin{thm}[Choi]\label{notcpmap} There exists a positive linear map $\Phi:\sym_3(\mathbb{R})\rightarrow \sym_3(\mathbb{R})$ that does not admit an expression as $\Phi(A)=\sum V_i^\top A V_i$ with $3\times 3$ matrices $V_i$.  The map
\[(\alpha_{jk})_{jk}\mapsto 2\begin{pmatrix}\alpha_{11}+\alpha_{22} & 0 & 0 \\ 0 &\alpha_{22}+\alpha_{33}& 0 \\ 0 &0 & \alpha_{33}+\alpha_{11}\end{pmatrix}-(\alpha_{jk})_{jk}\]
is such an example.
\end{thm}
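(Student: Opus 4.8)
The statement bundles two claims: that $\Phi$ is a positive map, and that it admits no Kraus-type expression $\Phi(A)=\sum_i V_i^{\top}AV_i$; by Stinespring's theorem and the bound on the Kraus rank the latter is the same as asserting $\Phi$ is not completely positive. The plan is to prove positivity by reducing to rank-one inputs, and to refute complete positivity by a short explicit computation with the Choi matrix.

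\textit{Positivity.} The positive semidefinite cone of $\sym_3(\mathbb R)$ is generated by the rank-one matrices $vv^{\top}$, so by linearity it is enough to check $\Phi(vv^{\top})\succeq 0$ for every $v\in\mathbb R^{3}$. Writing $\Phi(vv^{\top})=2\operatorname{diag}(v_1^2+v_2^2,\,v_2^2+v_3^2,\,v_3^2+v_1^2)-vv^{\top}$ and pairing against a test vector $x\in\mathbb R^{3}$, one computes
\[
x^{\top}\Phi(vv^{\top})x \;=\; v^{\top}\bigl(D(x)-xx^{\top}\bigr)v,\qquad
D(x):=2\operatorname{diag}\bigl(x_1^2+x_3^2,\,x_1^2+x_2^2,\,x_2^2+x_3^2\bigr),
\]
so $\Phi$ is positive precisely when $D(x)\succeq xx^{\top}$ for all $x$. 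When $D(x)$ is invertible this is, by the Schur complement, the scalar inequality $x^{\top}D(x)^{-1}x\le 1$; with $a=x_1^2$, $b=x_2^2$, $c=x_3^2$ it rearranges (using $\tfrac{a}{a+c}=1-\tfrac{c}{a+c}$ and cyclically) to
\[
\frac{a}{a+b}+\frac{b}{b+c}+\frac{c}{c+a}\ \ge\ 1\qquad(a,b,c\ge 0),
\]
which is Cauchy--Schwarz in Engel form: $\sum_{\mathrm{cyc}}\tfrac{a^{2}}{a^{2}+ab}\ge \tfrac{(a+b+c)^{2}}{a^2+b^2+c^2+ab+bc+ca}\ge 1$, the last inequality because $ab+bc+ca\ge 0$. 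The finitely many degenerate $x$ (two coordinates zero) that make $D(x)$ singular are checked by inspection. This proves $\Phi$ is positive.

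\textit{Failure of complete positivity.} Extend $\Phi$ to $M_3$ by the same formula. By Lemma \ref{lem:choi} this extension is completely positive if and only if its Choi matrix $C_{\Phi}=\bigl(\Phi(E_{ij})\bigr)_{i,j=1}^{3}\in M_{9}$ is positive semidefinite, so I only need a vector on which $C_{\Phi}$ is negative. The blocks are immediate: $\Phi(E_{ij})=-E_{ij}$ for $i\ne j$, while $\Phi(E_{ii})$ is diagonal (for instance $\Phi(E_{11})=\operatorname{diag}(1,0,2)$, $\Phi(E_{22})=\operatorname{diag}(2,1,0)$, $\Phi(E_{33})=\operatorname{diag}(0,2,1)$). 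A direct check then shows that $\eta=\sum_{i=1}^{3}e_i\otimes e_i$ is an eigenvector of $C_{\Phi}$ with eigenvalue $-1$, whence $\langle C_{\Phi}\eta,\eta\rangle=-3<0$. Hence $\Phi$ is not completely positive, so no decomposition $\Phi(A)=\sum_i V_i^{\top}AV_i$ exists; and, through Lemma \ref{factorlemma}, this simultaneously records what is needed in the sequel, namely that the canonical pencil $\hat Q_{\Phi}$ does not factor.

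\textit{Where the difficulty lies.} The hard part is positivity. Because $\Phi$ is a \emph{non-decomposable} positive map there is no sum-of-squares certificate for $\Phi(A)\succeq 0$, so one genuinely needs the reduction to rank-one inputs followed by the (not entirely obvious) cyclic inequality. The rest is routine: the block computation of $C_{\Phi}$, the verification on $\eta$, and the standard bookkeeping relating positivity of a map, its Kraus form, and its Choi matrix (and the real versus complex settings).
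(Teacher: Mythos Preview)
The paper does not prove Theorem \ref{notcpmap}; it is quoted as Choi's result and used as a black box (the paper's own contribution starts at Proposition \ref{nofactor}, whose proof \emph{assumes} this theorem). So there is no ``paper's proof'' to compare against here, and your argument has to stand on its own.

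Your positivity argument is correct and pleasant: the reduction to rank-one inputs, the Schur-complement step, and the cyclic inequality via Engel/Cauchy--Schwarz all check out.

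The second half has a real gap. You extend $\Phi$ to $M_3(\mathbb C)$ by the same formula and show that \emph{this particular extension} is not completely positive (the Choi matrix is negative on $\eta$). That is true, but it does not prove the theorem. The statement asks you to rule out real matrices $V_i$ with $\Phi(A)=\sum_i V_i^\top A V_i$ on $\sym_3(\mathbb R)$. If such $V_i$ existed, the Kraus formula $A\mapsto\sum_i V_i^\top A V_i$ would give a completely positive extension of $\Phi$ to $M_3$---but there is no reason that extension must agree with your ``same formula'' extension on the three-dimensional antisymmetric complement of $\sym_3$. The two maps are forced to coincide only on $\sym_3$. Concretely, for any real-Kraus extension $\Psi$ one computes $\langle C_\Psi\eta,\eta\rangle=\sum_k(\operatorname{tr} V_k)^2\ge 0$, which is perfectly compatible with your $\langle C_\Phi\eta,\eta\rangle=-3$ precisely because $C_\Psi$ and $C_\Phi$ need not be the same $9\times 9$ matrix. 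So the negative eigenvalue of \emph{your} Choi matrix does not obstruct the existence of the $V_i$. The sentence ``by Stinespring's theorem \ldots the latter is the same as asserting $\Phi$ is not completely positive'' papers over exactly this point: complete positivity of a map defined only on the operator system $\sym_3$ is not tested by the Choi matrix of one chosen extension to $M_3$.

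What is actually needed is an argument intrinsic to $\sym_3$. Choi's own route is via biquadratic forms: a real Kraus decomposition exists iff the nonnegative biquadratic $B(x,y)=x^\top\Phi(yy^\top)x$ is a sum of squares of bilinear forms, and he shows directly that this $B$ is not SOS. Alternatively one can show that $\Phi$, viewed as a map on the operator system $\sym_3(\mathbb C)$, is not completely positive---but that requires exhibiting a positive element of $M_n\otimes\sym_3(\mathbb C)$ (block-symmetric, all blocks Hermitian) on which $I_n\otimes\Phi$ is negative, not merely the full Choi matrix $\sum E_{ij}\otimes E_{ij}$, which does not lie in $M_3\otimes\sym_3$.
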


Choi's map  is not unital, since it sends the $I$ to $3I$. 
We correct this defect by multiplying by a positive scalar.

We will show that a variation of this map is not completely positive. 
\begin{prop}\label{nofactor} The unital positve map $\Phi:\sym_{3}(\mathbb{C})\rightarrow \sym_{3}(\mathbb{C})$ 
 given by 
\begin{equation}\label{Phimap}(\alpha_{jk})_{jk}\mapsto \frac{2}{3}\begin{pmatrix}\alpha_{11}+\alpha_{22} & 0 & 0 \\ 0 &\alpha_{22}+\alpha_{33}& 0 \\ 0 &0 & \alpha_{33}+\alpha_{11}\end{pmatrix}-\frac{1}{3}(\alpha_{jk})_{jk}\ \ \ \ \ \ \ \alpha_{jk}\in \mathbb{C}
\end{equation}
 is not completely positive. 
\end{prop}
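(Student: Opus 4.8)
The plan is to argue by contradiction, using Lemma~\ref{factorlemma} together with Choi's Theorem~\ref{notcpmap}. First I would record two harmless reductions. The map $\Phi$ is unital: a one-line computation gives $\Phi(I)=\tfrac23\operatorname{diag}(2,2,2)-\tfrac13 I=I$. It is also positive, being the positive multiple $\tfrac13\Psi$ of Choi's positive map $\Psi$ of Theorem~\ref{notcpmap}. Since a positive scalar affects neither complete positivity nor the existence of a congruence decomposition $A\mapsto\sum_i V_i^{\top}AV_i$, it suffices to prove that $\Phi$ is \emph{not} completely positive; so I would assume it is and derive a contradiction.

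As a unital positive map on the operator system $\sym_3(\mathbb{C})$, $\Phi$ then satisfies the hypotheses of the converse half of Lemma~\ref{factorlemma}, so its canonical pencil factors,
\[
\hat Q_\Phi(\alpha,\beta)=(V_0+\alpha V_1+\beta V_2)^*(V_0+\alpha V_1+\beta V_2),
\]
where, by the dimension remark in the proof of that lemma, the $V_j$ may be taken to map $\mathbb{C}^3$ into a finite-dimensional space $K$. Next I would expand the right-hand side and match the coefficients of $1,\alpha,\beta,\alpha^2,\beta^2,\alpha\beta$ against $\hat Q_\Phi(\alpha,\beta)=\Phi(B_{0,0})+\alpha\Phi(B_{1,0})+\beta\Phi(B_{0,1})+\alpha^2\Phi(B_{2,0})+\beta^2\Phi(B_{0,2})+\alpha\beta\Phi(B_{1,1})$. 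Substituting the explicit values $B_{0,0}=E_{11}$, $B_{1,0}=c(E_{12}+E_{21})$, $B_{0,1}=d(E_{13}+E_{31})$, $B_{1,1}=cd(E_{23}+E_{32})$, $B_{2,0}=c^2E_{22}$, $B_{0,2}=d^2E_{33}$ from \eqref{defineBij}, and absorbing the constants $c,d$ into $V_1,V_2$, this yields operators $W_0,W_1,W_2\colon\mathbb{C}^3\to K$ with $\Phi(E_{jj})=W_{j-1}^*W_{j-1}$ for $j=1,2,3$ and $\Phi(E_{jk}+E_{kj})=W_{j-1}^*W_{k-1}+W_{k-1}^*W_{j-1}$ for $j<k$.

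Since $\{E_{11},E_{22},E_{33},E_{12}+E_{21},E_{13}+E_{31},E_{23}+E_{32}\}$ is a basis of $\sym_3(\mathbb{C})$, linearity then gives $\Phi(A)=\sum_{j,k}a_{jk}W_{j-1}^*W_{k-1}=\mathcal{V}^*(A\otimes I_K)\mathcal{V}$ for every symmetric $A=(a_{jk})$, where $\mathcal{V}\colon\mathbb{C}^3\to\mathbb{C}^3\otimes K$ is $\mathcal{V}\xi=\sum_j e_j\otimes W_{j-1}\xi$; resolving $I_K$ into rank-one projections rewrites this as $\Phi(A)=\sum_\alpha Z_\alpha^*AZ_\alpha$ with finitely many $3\times3$ complex matrices $Z_\alpha$. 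Finally I would restrict to $\sym_3(\mathbb{R})$, on which $\Phi$ coincides with $\tfrac13$ of Choi's real map and hence is real-valued; writing $Z_\alpha=X_\alpha+iY_\alpha$ with $X_\alpha,Y_\alpha$ real, the imaginary part of $\sum_\alpha Z_\alpha^*AZ_\alpha$ must vanish, leaving $\Phi(A)=\sum_\alpha\bigl(X_\alpha^{\top}AX_\alpha+Y_\alpha^{\top}AY_\alpha\bigr)$, i.e.\ $\Phi(A)=\sum_\ell R_\ell^{\top}AR_\ell$ with real $3\times3$ matrices $R_\ell$. Then $\Psi(A)=3\Phi(A)=\sum_\ell(\sqrt3\,R_\ell)^{\top}A(\sqrt3\,R_\ell)$, contradicting Theorem~\ref{notcpmap}; hence $\Phi$ is not completely positive.

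I expect the main obstacle to be the middle bookkeeping step: extracting a genuine Kraus/congruence representation of $\Phi$ from the pencil factorization \emph{on a basis of symmetric matrices}, while keeping the auxiliary dimension finite so the resulting matrices are literally $3\times3$, together with the complex-to-real passage at the end. Everything else is either a direct appeal to Lemma~\ref{factorlemma} and Theorem~\ref{notcpmap} or a routine computation. One could instead try to exhibit a concrete positive $X\in M_n\otimes\sym_3(\mathbb{C})$ with $(I_n\otimes\Phi)(X)\not\succeq 0$ in the spirit of Choi's original argument, but the symmetry constraint on the second tensor leg makes the classical maximally entangled witness $\sum_{i,j}E_{ij}\otimes E_{ij}$ unavailable, so routing through Lemma~\ref{factorlemma} seems cleaner.
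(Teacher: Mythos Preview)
Your argument is correct, but it takes a different route from the paper. The paper proceeds directly through the Choi matrix: assuming $\Phi$ is completely positive, it invokes Arveson's extension theorem to obtain a completely positive extension $\Phi:M_3(\mathbb C)\to M_3(\mathbb C)$, so that the Choi matrix $C_\Phi$ is positive semidefinite; it then symmetrizes by setting $C_\Psi=\tfrac12(C_\Phi+C_\Phi^\top)=\tfrac12(C_\Phi+\overline{C_\Phi})$, observes that $C_\Psi$ is a real positive semidefinite matrix whose associated map $\Psi$ agrees with $\Phi$ on $\sym_3(\mathbb R)$, and factors $C_\Psi$ over the reals to produce the forbidden representation $\Phi(A)=\sum_i W_i^\top A W_i$ with real $W_i$.

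By contrast, you route through Lemma~\ref{factorlemma}: complete positivity of $\Phi$ yields a factorization of the canonical pencil $\hat Q_\Phi$, from which you unwind a complex Kraus form $\Phi(A)=\sum_\alpha Z_\alpha^*AZ_\alpha$ on $\sym_3(\mathbb C)$, and then pass to real Kraus operators by splitting $Z_\alpha=X_\alpha+iY_\alpha$ and using that $\Phi$ is manifestly real on $\sym_3(\mathbb R)$. Your real/complex reduction (real and imaginary parts of the Kraus operators) is different from the paper's (averaging the Choi matrix with its transpose), but both land on the same contradiction with Theorem~\ref{notcpmap}. The paper's path is a bit shorter and more standard; your path has the virtue of staying entirely within the pencil/factorization framework that the surrounding section is built on, and it avoids citing Arveson's extension theorem explicitly.
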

\begin{proof}
For a matrix  $A$, let $\bar{A}$ denote the matrix whose entries are the conjugates of the entries of $A$. The notation $A^*$ and $A^\top$ will denote the conjugate transpose and transpose of $A$ respectively. Now suppose that $\Phi$ is completely positive and thus extends, via Arveson's extention theorem \cite{Paulsen}, to a completely positive map also denoted by $\Phi$ from $M_3(\mathbb C)$ to  $M_3(\mathbb{C})$. Thus, $C_{\Phi}$, the Choi matrix of $\Phi,$ is positive semidefinite. Consider the matrix $\tw{C}=\frac{C_{\Phi}+C_{\Phi}^\top}{2}$. We note that $\tw{C}$ is the Choi matrix for some map $\Psi:M_3(\mathbb{C})\rightarrow M_3(\mathbb{C})$. From this point onward we will denote $\tw{C}$ as $C_{\Psi}$. Since transposition is a positive map,  $C_{\Psi}$ is also a positive matrix and hence $\Psi$ is a completely positive map. Hence by Choi's Theorem \cite{choi2}, there exist finitely many matrices (of the appropriate size) such that, for $A\in M_3(\mathbb C)$,
\begin{equation}\label{psirep}\Psi(A)=\sum_i V_i^*A V_i.
\end{equation} 
To be clear, writing $C_{\Phi}=(C_{jk})_{j,k=1}^3$ where the $C_{ij}$ $3\times 3$ are matrices, and using $C_{jk}=C_{kj}^*$ (since $C=C^*$)

\[C_{\Psi}=\frac{C_{\Phi}+C_{\Phi}^\top}{2}=
\frac{1}{2}\begin{pmatrix}
C_{11}&C_{12}&C_{13}\\ 
C_{12}^*&C_{22}&C_{23}\\
C_{13}^*&C_{23}^*&C_{33}\\
\end{pmatrix}+
\frac{1}{2}\begin{pmatrix}
C_{11}^\top&(C_{12}^*)^\top&(C_{13}^*)^\top\\ 
C_{12}^\top&C_{22}^\top&(C_{23}^*)^\top\\
C_{13}^\top&C_{23}^\top&C_{33}^\top\\
\end{pmatrix}.
\]
In particular, 
\begin{equation} \label{eq:realmat}
C_{\Psi} = \frac{C_{\Phi} + \overline{C_{\Phi}}}{2}. 
\end{equation}
We first show that the map $\Psi$ when restricted to $\sym_3(\mathbb{R})$ is the same map as $\Phi$ restricted to $\sym_3(\mathbb{R})$. Let $E_{jk}$ be the standard matrix basis elements and note the following basis for the symmetric complex matrices,
$\{\frac{E_{jk}+ E_{kj}}{2}: 1\le j\le k \le 3\}$.
For $i,j=1,2,3,$ $\Phi(E_{jk}+E_{kj})=C_{jk}+C_{jk}^*\in \sym_3(\mathbb{R})$ by definition as seen from \eqref{Phimap}. Hence 
\[C_{jk}+C_{jk}^*=(C_{jk}+C_{jk}^*)^\top.\]
Thus,
\[\begin{split} \Psi(E_{jk}+E_{kj})=&\frac{C_{jk}+(C_{jk}^*)^\top}{2} + \frac{C_{jk}^*+C_{jk}^\top}{2}\\  = & \frac{C_{jk}+C_{jk}^*}{2}+\frac{(C_{jk}+C_{jk}^*)^\top}{2}\\
 =&C_{jk}+C_{jk}^*=\Phi(E_{jk}+E_{kj}). \end{split}\]
Hence,  
\[\Psi|_{\sym_{3}(\mathbb{R})}=\Phi|_{\sym_{3}(\mathbb{R})}.\]
 
By \eqref{eq:realmat} $C_{\Psi}$ is a real symmetric matrix. Since $C_{\Psi}$ is positive it has a factorization into two real matrices. This is equivalent to the fact that $C_{\Phi}=\sum_{i} w_i^\top w_i$ where each $w_i$ is a $1\times 9$ matrix with real entries. Write $w_i=(x_1^i,x_2^i,x_3^i)$ where each $x_j^i$ is a $1\times 3$ matrix. For $1\le i\le 3,$ form the $3\times 3$ matrices $W_i$ whose $j$-th row is $x_j^i$. Note that $\Psi(E_{j,k})=\sum_{i} W_i^\top E_{j,k}W_i$ and by linearity $\Psi(A)=\sum_{i} W_i^\top A W_i$

Hence, the matrices $V_i$ in the representation of $\Psi$ in \eqref{psirep} can be replaced by real matrices $W_i$ and 
\[\Psi(A)=\sum_i W_i^\top A W_i.\]
Since  \[\Psi|_{\sym_{3}(\mathbb{R})}=\Phi|_{\sym_{3}(\mathbb{R})},\]
this is a contradiction of Theorem \ref{notcpmap}.
\end{proof}

\begin{prop} \label{prop:counter} For each $c,d>0$ there exists  a 3-isometric 2-tuple of invertible operators $(T,S)$ in the a class $\mathfrak{F}_{c,d}$ such that the pencil $\hat{Q}_{T,S}$ does not factor. In particular, the 2-tuple $(T,S)$ does not lift to a 2-tuple $(J_1,J_2)$ in the class $\mathfrak{J}_{c,d}$.
\end{prop}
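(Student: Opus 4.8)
The plan is to chain together the machinery already developed in this section. Fix $c,d>0$ and let $\Phi:\sym_3(\mathbb C)\to\sym_3(\mathbb C)$ be the unital positive map of Proposition~\ref{nofactor}, which fails to be completely positive. With the matrices $B_{i,j}$ from \eqref{defineBij}, form the two-variable quadratic pencil
\[
 Q_\Phi(\alpha,\beta)=I+\sum_{0<i+j\le 2}\alpha^i\beta^j\,\Phi(B_{i,j}),
\]
whose coefficients lie in $\sym_3(\mathbb C)\subset M_3(\mathbb C)$, i.e.\ are operators on the (finite-dimensional, hence separable) Hilbert space $\mathbb C^3$. This is a pencil of the form \eqref{eq:Q}, so it fits the standing hypotheses of Section~\ref{sec:counter} once the positivity condition is verified.

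First I would check \eqref{cdcondition} for $Q_\Phi$. Subtracting from the matrix in \eqref{defineBij} the quantities $\tfrac1{c^2}B_{2,0}$ and $\tfrac1{d^2}B_{0,2}$ read off there removes the two $1$'s in the $(2,2)$ and $(3,3)$ entries, leaving the rank-one matrix
\[
 \hat Q(\alpha,\beta)=I+\sum_{0<i+j\le 2}\alpha^i\beta^j B_{i,j}-\tfrac1{c^2}B_{2,0}-\tfrac1{d^2}B_{0,2}=\begin{pmatrix}1\\ \alpha c\\ \beta d\end{pmatrix}\begin{pmatrix}1 & \alpha c & \beta d\end{pmatrix},
\]
which is positive semidefinite for all $(\alpha,\beta)\in\mathbb R^2$ (the factorization already recorded at the end of Section~\ref{sec:ext}). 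Since $\Phi$ is linear, positive, and unital, $\hat Q_\Phi(\alpha,\beta)=\Phi\big(\hat Q(\alpha,\beta)\big)\succeq 0$ for all real $(\alpha,\beta)$, which is exactly \eqref{cdcondition}; note also that $\hat Q_\Phi$ is the canonical pencil attached to $\Phi$ in Lemma~\ref{factorlemma}.

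Now apply Proposition~\ref{shiftconstruct} to the pencil $Q_\Phi$: it produces a commuting pair $(T,S)$ of invertible $3$-isometries in the class $\mathfrak F_{c,d}$ whose modified pencil $\hat Q_{T,S}$ is tied to $\hat Q_\Phi$ by the shift identity proved there. By Proposition~\ref{bothfactor}, $\hat Q_{T,S}$ factors in the form of Theorem~\ref{liftthm} if and only if $\hat Q_\Phi$ does. But if $\hat Q_\Phi$ factored, Lemma~\ref{factorlemma} would force $\Phi$ to be completely positive, contradicting Proposition~\ref{nofactor} (hence Choi's Theorem~\ref{notcpmap}). Therefore $\hat Q_\Phi$ does not factor, so neither does $\hat Q_{T,S}$. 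Finally, Theorem~\ref{liftthm} characterizes the $T\in\mathfrak F_{c,d}$ that lift into $\mathfrak J_{c,d}$ as exactly those whose modified pencil factors; since $\hat Q_{T,S}$ does not, $(T,S)$ admits no lift to a pair $(J_1,J_2)\in\mathfrak J_{c,d}$, which is the assertion.

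The only step carrying content beyond quoting earlier results is the positivity check in the second paragraph, and it in turn reduces to the fact that the model's modified pencil is rank-one positive together with the fact that positive unital maps preserve positive semidefiniteness; so I anticipate no real obstacle, just the bookkeeping of matching notation — in particular the harmless swap of the roles of $c,d$ (equivalently of $B_{2,0},B_{0,2}$) between \eqref{defineBij} and the display in Lemma~\ref{factorlemma}.
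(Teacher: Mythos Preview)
Your proposal is correct and follows essentially the same approach as the paper: apply the positive, non-completely-positive $\Phi$ from Proposition~\ref{nofactor} to the rank-one model pencil to obtain a nonnegative $\hat Q_\Phi$ that cannot factor (Lemma~\ref{factorlemma}), feed $Q_\Phi$ into Proposition~\ref{shiftconstruct} to build $(T,S)\in\mathfrak F_{c,d}$, invoke Proposition~\ref{bothfactor} to transfer non-factoring to $\hat Q_{T,S}$, and conclude via Theorem~\ref{liftthm}. If anything, your argument is slightly more explicit than the paper's, which conflates the roles of Propositions~\ref{shiftconstruct} and~\ref{bothfactor} in its final sentence; your flagging of the $c,d$ (equivalently $B_{2,0},B_{0,2}$) indexing swap between \eqref{defineBij} and Lemma~\ref{factorlemma} is also on point and indeed harmless.
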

\begin{proof} Given $c,d>0$, consider the following basis for $\sym_{3}(\mathbb{C})$,
\begin{equation}\label{basischoice}
\begin{split}
&B_{0,1}=\begin{pmatrix}0&c & 0\\ c & 0& 0 \\ 0 & 0 & 0\end{pmatrix};\ \ B_{1,0}=\begin{pmatrix}0&0 & d\\ 0 & 0& 0 \\ d & 0 & 0\end{pmatrix};\ \ B_{1,1}=\begin{pmatrix}0&0 & 0\\ 0 & 0& cd \\ 0 & cd & 0\end{pmatrix};\\
&B_{0,0}=\begin{pmatrix}1&0 & 0\\ 0 & 0& 0 \\ 0 & 0 & 0\end{pmatrix};\ \ B_{0,2}=\begin{pmatrix}0&0 & 0\\ 0 & c^2& 0 \\ 0 & 0 & 0\end{pmatrix};\ \ B_{2,0}=\begin{pmatrix}0&0 & 0\\ 0 & 0& 0 \\ 0 & 0 & d^2\end{pmatrix}.
\end{split}
\end{equation}
We note $B_{0,0}=I-\frac{1}{c^2}B_{0,2}-\frac{1}{d^2}B_{2,0}$.
By Proposition \ref{notcpmap} there exists a unital positive but not completely positive linear map $\Phi:\sym_3(\mathbb C)\to M_3(\mathbb C)$. Thus,
\begin{equation}\label{Phipencil}0\preceq\Phi\left(\begin{pmatrix}1&\alpha c & \beta d\\ \alpha c & \alpha^2c^2& \alpha \beta c d \\ \beta d & \alpha \beta c d & \beta^2d^2\end{pmatrix}\right)=\Phi\left(\sum_{0\leq i+j\le 2}\alpha^i\beta^jB_{i,j}\right)=\sum_{0\leq i+j\le 2}\alpha^i\beta^j\Phi(B_{i,j})=\hat{Q}_{\Phi}(\alpha,\beta).
\end{equation}
Here we have used the notation in Lemma \ref{factorlemma}. By Lemma  \ref{factorlemma} the canonical pencil $\hat{Q}_{\Phi}(\alpha,\beta)$ does not factor since $\Phi$ is not a completely poistive map. Let 
\[Q=I+\sum_{0<i+j\leq 2}\tw{B}_{i,j}\]
where 
\[\tw{B}_{i,j}=\Phi(B_{i,j}).\]
Note
\[Q(\alpha,\beta)-\frac{1}{c^2}\tw{B}_{0,2}-\frac{1}{d^2}\tw{B}_{2,0}=\sum_{0\leq i+j\le 2}\alpha^j\beta^k\Phi(B_{i,j})=\hat{Q}_{\Phi}(\alpha,\beta).\]
By Proposition  \ref{shiftconstruct}, since $\hat{Q}(\alpha,\beta)\succeq 0$ we can construct a 2-tuple $(T,S)$ in the class $\mathfrak{F}_{c,d}$ such that $\hat{Q}_{T,S}(\alpha,\beta)$ does not factor. 
By Theorem \ref{liftthm}, the  2-tuple $(T,S)$ does not lift. 
\end{proof}

\subsection{Strengthening the Counter-Example}
While the counter-example of Propostion \ref{prop:counter} answers the natural question of whether 2-tuples $T$ in $\mathfrak{F}_{c,d}$ always lift to a 2-tuple $J$ in the class $\mathfrak{J}_{c,d}$, we will  actually construct a stronger counter-example. Given a quadratic pencil which does not factor we will construct a 2-tuple of commuting 3-isometries that does not lift to a 2-tuple $J$ in any of the classes $\mathfrak{J}_{c,d}$. Let 
\begin{equation}\label{non-factoringQ}
Q(\alpha, \beta)=\sum_{0\leq i+j\leq 2} \alpha^i\beta^j B_{ij}\succeq 0 \text{ for all }(\alpha,\beta)\in \mathbb{R}^2
\end{equation}
 be a not necessarily monic quadratic pencil with $B_{ij}\in\sB(H)$ which does not factor. The existence of such objects is given by Proposition \ref{nofactor}. We begin  with the following lemma. 
\begin{lemma}\label{stillnofactor}
If $Q(\a,\b)$ does not factor in the form
\[Q(\alpha,\beta)=(V_0+\alpha V_1+\beta V_2)^*(V_0+\alpha V_1+\beta V_2)\]
and if $\Gamma \in \sB(H)$ is positive semidefinite, then  $Q(\a,\b)-\Gamma$ does not factor in the form
\[Q(\alpha,\beta)-\Gamma=(W_0+\alpha W_1+\beta W_2)^*(W_0+\alpha W_1+\beta W_2).\]
\end{lemma}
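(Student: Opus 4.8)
The plan is to argue by contraposition: assuming that $Q(\alpha,\beta)-\Gamma$ factors as $(W_0+\alpha W_1+\beta W_2)^*(W_0+\alpha W_1+\beta W_2)$, I will manufacture a factorization of $Q(\alpha,\beta)$ itself, contradicting the hypothesis. The key observation is that $\Gamma\succeq 0$ means $\Gamma = R^*R$ for some bounded operator $R:H\to H$ (for instance $R=\Gamma^{1/2}$). Since $\Gamma$ contributes no $\alpha$ or $\beta$ dependence, adding it back only perturbs the constant term of the pencil.

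First I would write $Q(\alpha,\beta) = \big(Q(\alpha,\beta)-\Gamma\big) + \Gamma = (W_0+\alpha W_1+\beta W_2)^*(W_0+\alpha W_1+\beta W_2) + R^*R$. The next step is to recognize that a sum of two expressions of the form $X^*X$ is again of that form after stacking: if we set
\[
V_0 = \begin{pmatrix} W_0 \\ R\end{pmatrix}, \qquad V_1 = \begin{pmatrix} W_1 \\ 0 \end{pmatrix}, \qquad V_2 = \begin{pmatrix} W_2 \\ 0 \end{pmatrix},
\]
viewing these as operators from $H$ into the direct sum of the relevant auxiliary Hilbert space with $H$, then a direct computation gives
\[
(V_0+\alpha V_1 + \beta V_2)^*(V_0+\alpha V_1+\beta V_2) = (W_0+\alpha W_1+\beta W_2)^*(W_0+\alpha W_1+\beta W_2) + R^*R = Q(\alpha,\beta).
\]
This exhibits $Q$ in the forbidden factored form, contradicting the assumption that $Q$ does not factor. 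Hence $Q(\alpha,\beta)-\Gamma$ cannot factor either.

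I do not anticipate a genuine obstacle here; the argument is a routine ``orthogonal stacking'' trick that expresses a sum of Gramians as a single Gramian. The only point that merits a sentence of care is the bookkeeping of the codomains: the factorization of Theorem \ref{liftthm} and of the various propositions allows the $V_j$ to map into an auxiliary Hilbert space, so enlarging that auxiliary space by an orthogonal copy of $H$ (or of $\ran R$) is harmless and does not affect whether a factorization ``counts.'' One should also note in passing that $Q(\alpha,\beta)-\Gamma$ need not itself be positive semidefinite for the statement to have content, but the lemma as stated makes no such claim — it only concerns the algebraic form of a putative factorization — so nothing further is needed.
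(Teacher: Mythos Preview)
Your proof is correct and follows essentially the same contrapositive argument as the paper: factor $\Gamma=\Delta^*\Delta$ (your $R^*R$) and stack $\begin{pmatrix}W_j\\0\end{pmatrix}$ with $\begin{pmatrix}W_0\\\Delta\end{pmatrix}$ to exhibit a factorization of $Q$. Your remark about enlarging the auxiliary codomain is exactly the point implicit in the paper's use of column operators.
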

\begin{proof}
We prove the contrapositive. Accordingly, suppose 
\[Q(\alpha,\beta)-\Gamma=(W_0+\alpha W_1+\beta W_2)^*(W_0+\alpha W_1+\beta W_2),\]
in which case
\[Q(\alpha,\beta)=(W_0+\alpha W_1+\beta W_2)^*(W_0+\alpha W_1+\beta W_2)+\Gamma.\]
Since, $\Gamma\succeq 0$, there exists $\Delta\in \sB(H)$ such that $\Gamma =\Delta^*\Delta$.
Hence, 
\[Q(\alpha,\beta)=\left ( \begin{pmatrix}W_0\\ \Delta
\end{pmatrix}+\a \begin{pmatrix}W_1\\ 0\end{pmatrix}+\b \begin{pmatrix}W_2\\ 0\end{pmatrix}\right)^*\left ( \begin{pmatrix}W_0\\ \Delta
\end{pmatrix}+\a \begin{pmatrix}W_1\\ 0\end{pmatrix}+\b \begin{pmatrix}W_2\\ 0\end{pmatrix}\right).\]
\end{proof}

We now show there exists a monic pencil $Q(\a,\b)$ such that $Q(\alpha,\beta)-\frac{1}{c^2}B_{2,0}-\frac{1}{d^2}B_{0,2}$ does not factor for all $c,d$ for which 
\[ Q-\frac{1}{c^2}B_{2,0}-\frac{1}{d^2}B_{0,2}\succeq 0\  \text{ for all }(\a,\b)\in \mathbb{R}^2.\]

\begin{thm}\label{noclassfactor} For each  $c_0,d_0>0$ there exists  a monic quadratic pencil 
\[Q(\alpha,\beta)=I+\sum_{0<i+j\leq 2}\alpha^i\beta^j B_{i_j}\]
such that
\begin{enumerate}[(i)]
\item 
\[Q(\alpha,\beta)-\frac{1}{c_0^2}B_{0,2}-\frac{1}{d_0^2} B_{2,0}\succeq 0\]
for all $(\a,\b)\in \mathbb{R}^2$
\item if $c,d>0$, then there does not exist an auxiliary Hilbert space $K$ and operators  $V_0, V_1, V_2\in \sB(H,K)$ such that 
\[Q(\alpha,\beta)-\frac{1}{c^2}B_{0,2}-\frac{1}{d^2} B_{2,0}=(V_0+\alpha V_1+\beta V_2)^*(V_0+\alpha V_1+\beta V_2).\]
\end{enumerate}
\end{thm}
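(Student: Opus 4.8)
The plan is to start from a non-factoring, not-necessarily-monic pencil $Q_0(\alpha,\beta)=\sum_{0\le i+j\le 2}\alpha^i\beta^j B_{ij}\succeq 0$ whose existence is guaranteed by Proposition \ref{nofactor} (built from Choi's map), and then ``monicize'' it in a way that (a) produces a genuine monic pencil, (b) keeps the positivity of the $\hat Q$-modification for the prescribed $c_0,d_0$, and (c) preserves non-factorability for \emph{every} admissible choice of $c,d$. The natural device is to add a large positive constant operator block to the $(0,0)$-coefficient, rescale, and if necessary pad with an extra direct-summand coordinate so that the degree-two coefficients $B_{2,0}$ and $B_{0,2}$ become whatever we want (e.g. so that $\frac{1}{c_0^2}B_{0,2}+\frac{1}{d_0^2}B_{2,0}$ is dominated by the constant term). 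Concretely I would set $\tilde Q(\alpha,\beta)=Q_0(\alpha,\beta)+(\lambda-1)B_{0,0}+$ (diagonal corrections), choosing $\lambda$ large enough that $\tilde Q(0,0)=I$ after normalization and that condition (i) holds.

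The key steps, in order: \textbf{Step 1.} Invoke Proposition \ref{nofactor} to fix a positive-but-not-completely-positive $\Phi:\sym_3(\mathbb C)\to M_3(\mathbb C)$ and, via Lemma \ref{factorlemma}, obtain the associated non-factoring pencil $\hat Q_\Phi(\alpha,\beta)=\sum \alpha^i\beta^j\Phi(B_{ij})\succeq 0$; this is our seed $Q_0$. \textbf{Step 2.} Observe that $Q_0$ not factoring is robust under subtraction of positive semidefinite operators independent of $(\alpha,\beta)$ --- this is exactly Lemma \ref{stillnofactor}. So if I can realize $\frac{1}{c^2}B_{0,2}+\frac{1}{d^2}B_{2,0}$ as (the $(0,0)$-perturbation away from) a positive operator subtracted from a pencil that is already non-factoring, I am done for that $(c,d)$. \textbf{Step 3.} Build the monic pencil: take $Q(\alpha,\beta)=I+\alpha B_{1,0}+\beta B_{0,1}+\alpha\beta B_{1,1}+\alpha^2 B_{2,0}+\beta^2 B_{0,2}$ where the coefficients are those of $Q_0$ shifted so the constant term is $I$ and where the quadratic coefficients have been arranged (possibly after enlarging $H$ by a harmless summand on which everything acts trivially) so that $\frac{1}{c_0^2}B_{0,2}+\frac{1}{d_0^2}B_{2,0}\preceq I$, giving (i). \textbf{Step 4.} For arbitrary $c,d>0$ with the positivity condition holding, write $Q(\alpha,\beta)-\frac{1}{c^2}B_{0,2}-\frac{1}{d^2}B_{2,0}$ as $\big(Q_0$ pencil with constant term replaced by $I-\frac{1}{c^2}B_{0,2}-\frac{1}{d^2}B_{2,0}\big)$, i.e.\ as (original non-factoring pencil) $-$ (a positive operator $\Gamma$ depending on $c,d$, $\alpha,\beta$-independent), and apply Lemma \ref{stillnofactor} to conclude it does not factor.

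The main obstacle I anticipate is \textbf{Step 3}: making the quadratic part of the monic pencil have exactly the positivity behavior required by (i) \emph{simultaneously} with keeping the degree-zero-and-one part linked to Choi's non-factoring structure, so that Lemma \ref{stillnofactor} still applies for \emph{all} $(c,d)$ and not just $(c_0,d_0)$. The subtlety is that the subtracted operator $\Gamma=\frac{1}{c^2}B_{0,2}+\frac{1}{d^2}B_{2,0}-$ (the analogous $c_0,d_0$ term) need not be positive semidefinite when $c<c_0$ or $d<d_0$; but precisely for those $(c,d)$ the hypothesis ``$Q-\frac{1}{c^2}B_{0,2}-\frac{1}{d^2}B_{2,0}\succeq 0$ for all $(\alpha,\beta)$'' fails (the $\alpha\to\infty$ or $\beta\to\infty$ asymptotics force $\frac{1}{c^2}B_{0,2}\preceq B_{0,2}$, i.e.\ $c\ge 1$, and similarly for $d$), so the statement is vacuous there. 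Pinning down this asymptotic argument --- that admissibility of $(c,d)$ forces $\frac{1}{c^2}B_{0,2}\preceq B_{0,2}$ and $\frac{1}{d^2}B_{2,0}\preceq B_{2,0}$ (equivalently $c,d\ge 1$ once we normalize the leading coefficients to be the relevant projections), whence $\Gamma\succeq 0$ --- is the one genuinely non-routine point, and everything else is bookkeeping on top of Lemma \ref{stillnofactor} and Proposition \ref{nofactor}.
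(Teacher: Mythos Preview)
Your overall strategy --- seed with Choi's non-factoring pencil, monicize, then apply Lemma~\ref{stillnofactor} for each $(c,d)$ --- matches the paper's. Two concrete steps need repair, however.

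\textbf{Monicization (Step 3).} You suggest ``add a large positive constant operator block to the $(0,0)$-coefficient, rescale.'' This points in the wrong direction: adding $\lambda I$ with $\lambda$ large eventually makes the Choi matrix $C_\Phi+\lambda I$ positive semidefinite, whence by Lemmas~\ref{lem:choi} and~\ref{factorlemma} the perturbed pencil \emph{does} factor. The paper instead adds a \emph{small} $\varepsilon I$, chosen so that $C_\Phi+\varepsilon I$ remains non-positive (possible since $C_\Phi$ has a strictly negative eigenvalue); then $Q_\varepsilon=Q_0+\varepsilon I$ still fails to factor, and now $Q_\varepsilon(0,0)\succeq\varepsilon I$ is invertible. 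Conjugating by $\Delta=Q_\varepsilon(0,0)^{-1/2}$ produces a monic pencil $\tilde Q_\varepsilon=\Delta^*Q_\varepsilon\Delta$ which is still non-factoring (invertible conjugation preserves this) and satisfies $\tilde Q_\varepsilon\succeq\delta I$ for some $\delta>0$; this $\delta$ supplies condition (i). No padding by extra summands is needed.

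\textbf{The ``all $(c,d)$'' step (Step 4).} Your asymptotic argument is off: sending $\alpha\to\infty$ in $Q-\tfrac{1}{c^2}B_{0,2}-\tfrac{1}{d^2}B_{2,0}$ isolates only the coefficient $\alpha^2 B_{2,0}$, because the subtraction sits in the \emph{constant} coefficient, not the quadratic one; nothing about $c$ or $d$ falls out. Fortunately no such argument is required. In the construction just described, the new $B_{2,0}$ and $B_{0,2}$ are $\Delta^*\Phi(\,\cdot\,)\Delta$ applied to positive matrices (the diagonal basis elements in \eqref{basischoice}), and $\Phi$ is a positive map, so both are $\succeq 0$. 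Hence $\Gamma=\tfrac{1}{c^2}B_{0,2}+\tfrac{1}{d^2}B_{2,0}\succeq 0$ for \emph{every} $c,d>0$, and Lemma~\ref{stillnofactor} applies outright --- no vacuity case, no admissibility restriction on $(c,d)$.
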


\begin{proof} Let $Q(\alpha,\beta)$ be the non-monic matrix valued pencil that does not factor, i.e. 
\[Q(\alpha,\beta):= \hat{Q}_\Phi(\alpha,\beta)=\Phi\left(\sum_{0\leq i+j \leq 2}\alpha^i\beta^jB_{i,j}\right)\]
where $\Phi$ is the map from Proposition \ref{nofactor} and $\hat{Q}_\Phi(\alpha,\beta)$ is the pencil defined by Equation \eqref{Phipencil} in the proof of Proposition \ref{prop:counter}. The first step is to show that we can assume that $Q$ is monic and that there exists a $\delta>0$ such that 
\[
  Q(\alpha,\beta)\succeq \delta I
\]
for all $\alpha,\beta\in\mathbb R$. For an operator $A\in \mathscr{B}(H)$ the notation $A\succeq 0$ will mean that for all $x\in H$
\[\langle Ax,x\rangle \ge 0.\]
We start by considering the following pencil 
\[Q_{\varepsilon}(\alpha,\beta)=Q(\alpha,\beta)+\varepsilon I\succ 0.\]
Here we need to choose $\varepsilon>0$ so that the $Q(\alpha,\beta)+\varepsilon I$ still does not factor. By Lemma \ref{factorlemma} $Q(\alpha,\beta)$ will factor if and only if the map $\Phi$ is completely positive. The map $\Phi$ is completely positive if and only if its Choi matrix $C_\Phi$ is positive semidefinite by Lemma \ref{lem:choi}. Since $\Phi$ is a unital map, and by definition of $Q(\alpha,\beta)$, we will have that $Q(\alpha,\beta)+\varepsilon I$ will not factor if $C_{\Phi}+\varepsilon I$ is not positive.  Since $C_\Phi$ is not  positive in the first place, we simply need to pick an $\varepsilon>0$ small enough so that $C_\Phi+\varepsilon I$ is not positive.
We note that 
\[Q(\alpha,\beta)=\Phi\left(\begin{pmatrix}1&\alpha c_0&\beta d_0\\\alpha c_0 &\alpha^2 c_0^2& \alpha \beta c_0 d_0\\
\beta d_0& \alpha \beta c_0 d_0& \beta^2 d_0^2
\end{pmatrix}\right)\]
where $c_0$ and $d_0$ come from the choice of basis as in \eqref{basischoice}.
Since $\Phi$ is a unital map
\[Q_\varepsilon(\alpha,\beta)=\Phi\left(\begin{pmatrix}1+\varepsilon&\alpha c_0&\beta d_0\\\alpha c_0 &\alpha^2 c_0^2+\varepsilon& \alpha \beta c_0 d_0\\
\beta d_0& \alpha \beta c_0 d_0& \beta^2 d_0^2+\varepsilon
\end{pmatrix}\right).\]
Let 
\[Q_\varepsilon(\alpha,\beta)=\sum_{0\leq i+j \leq 2}\alpha^i\beta^j\tw{B}_{i,j}. 
\]
In particular
\[Q_\varepsilon(0,0)=\tilde{B}_{00}\succeq \varepsilon \succ 0.\]
Let  $\Delta=B_{0,0}^{-\frac{1}{2}}\succeq 0$ and note that 
\[\tw{Q}_\varepsilon(\alpha,\beta):=\Delta^*[Q(\alpha,\beta)+\varepsilon I]\Delta\succ 0\] and is monic. Now choose a $\delta>0$ such that $\varepsilon\Delta^*\Delta\succeq \delta I$. Hence $\tw{Q}_{\varepsilon}(\a,\b)$ is monic and $\tw{Q}_{\varepsilon}(\a,\b)\succeq \delta I$.

With our assumptions validated from this point on we will assume we have a \emph{monic} matrix pencil $Q(\alpha,\beta)$ such that
\[Q(\alpha,\beta)\succeq \delta I\]
for all $(\alpha,\beta)\in \mathbb{R}^2$. 
Let 
\[Q(\alpha,\beta)=I+\sum_{0<i+j\leq 2}\alpha^i\beta^jB_{i,j}.\]
For all $(c,d)\in \mathbb{R}^2$ such that 
\[\delta I\succeq \left(\frac{1}{c^2}{B}_{0,2}+\frac{1}{d^2} {B}_{2,0}\right)\]
the pencil $Q$ is monic,
\[Q(\alpha,\beta)-\frac{1}{c^2}{B}_{0,2}+\frac{1}{d^2} {B}_{2,0}\succeq 0, \] 
and does not factor by Lemma \ref{stillnofactor}.
\end{proof}
We summarize in the following proposition. 
\begin{prop}
There exists $c_0,d_0>0$ and a 3-isometric 2-tuple of invertible operators $(T,S)$ in the class $\mathfrak{F}_{c_0,d_0}$ such that $(T,S)$ does not lift to any 2-tuple $J$ in any class $\mathfrak{J}_{c,d}$.
\end{prop}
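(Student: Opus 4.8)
The plan is to assemble results already in hand: Theorem \ref{noclassfactor} supplies a quadratic pencil that resists every relevant factorization, Proposition \ref{shiftconstruct} turns it into a commuting pair of invertible $3$-isometries, and the compression argument from the first half of the proof of Proposition \ref{bothfactor}, run with arbitrary auxiliary parameters, rules out a lift into \emph{every} class $\mathfrak J_{c,d}$.

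First, fix any $c_0,d_0>0$ and invoke Theorem \ref{noclassfactor} to get a monic quadratic pencil $Q(\alpha,\beta)=I+\sum_{0<i+j\le 2}\alpha^i\beta^j B_{i,j}$ with $Q(\alpha,\beta)\succeq\delta I>0$ for all $(\alpha,\beta)$, with $Q-\tfrac1{c_0^2}B_{2,0}-\tfrac1{d_0^2}B_{0,2}\succeq 0$, and such that for \emph{every} $c,d>0$ the pencil $Q-\tfrac1{c^2}B_{2,0}-\tfrac1{d^2}B_{0,2}$ admits no factorization $(V_0+\alpha V_1+\beta V_2)^*(V_0+\alpha V_1+\beta V_2)$ (this is Theorem \ref{noclassfactor}(ii), up to the relabelling $c\leftrightarrow d$ between its statement and the convention of $\mathfrak F_{c,d}$). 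Since $Q(\alpha,\beta)\succeq 0$ everywhere, the sesquilinear form preceding Proposition \ref{shiftconstruct} is positive semidefinite, so feeding $Q$ and the pair $(c_0,d_0)$ into Proposition \ref{shiftconstruct} produces a commuting pair of invertible $3$-isometries $(T,S)$ on a Hilbert space $\mathcal H$ with $(T,S)\in\mathfrak F_{c_0,d_0}$, which is the first half of the assertion. The construction also records, through \eqref{eq:BtwoT}--\eqref{eq:BtwoS}, that the quadratic coefficients of $Q_{T,S}$ are the \emph{shift-invariant} operators $B_{2,0}(T)\colon h\otimes f_j\otimes f_k\mapsto (B_{2,0}h)\otimes f_j\otimes f_k$ and $B_{0,2}(S)\colon h\otimes f_j\otimes f_k\mapsto (B_{0,2}h)\otimes f_j\otimes f_k$ on $\mathcal D$, independently of $j,k$.

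Now suppose, for contradiction, that $(T,S)$ lifts to a $2$-tuple $J=(J_1,J_2)\in\mathfrak J_{c,d}$ for some $c,d>0$, via an isometry $V$ with $VT=J_1V$ and $VS=J_2V$, and write $\hat Q^{(c,d)}$ for a pencil modified with the parameters $c,d$. Since $\mathfrak J_{c,d}\subseteq\mathfrak F_{c,d}$ and compression along $V$ preserves positivity, $(T,S)\in\mathfrak F_{c,d}$ and $\hat Q^{(c,d)}_{T,S}=V^*\hat Q^{(c,d)}_J V$. By the computation closing Section \ref{sec:ext}, $\hat Q^{(c,d)}_J=\big(\begin{pmatrix}1\\ \alpha c\\ \beta d\end{pmatrix}^*\begin{pmatrix}1 & \alpha c & \beta d\end{pmatrix}\big)\otimes I$ factors, hence so does $\hat Q^{(c,d)}_{T,S}$; write $\hat Q^{(c,d)}_{T,S}(\alpha,\beta)=(\mathcal V_0+\alpha\mathcal V_1+\beta\mathcal V_2)^*(\mathcal V_0+\alpha\mathcal V_1+\beta\mathcal V_2)$. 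By Proposition \ref{shiftconstruct} and the shift-invariance of the quadratic coefficients just noted, for $h,g\in H$,
\[
\big\langle \hat Q^{(c,d)}_{T,S}(\alpha,\beta)\,(h\otimes f_0\otimes f_0),\,(g\otimes f_0\otimes f_0)\big\rangle=\big\langle (Q(\alpha,\beta)-\tfrac1{c^2}B_{2,0}-\tfrac1{d^2}B_{0,2})h,\,g\big\rangle,
\]
so compressing the factorization by the isometry $U\colon H\to\mathcal H$, $Uh=h\otimes f_0\otimes f_0$ (exactly as in the first half of the proof of Proposition \ref{bothfactor}), yields
\[
Q(\alpha,\beta)-\tfrac1{c^2}B_{2,0}-\tfrac1{d^2}B_{0,2}=(\mathcal V_0U+\alpha\mathcal V_1U+\beta\mathcal V_2U)^*(\mathcal V_0U+\alpha\mathcal V_1U+\beta\mathcal V_2U),
\]
contradicting Theorem \ref{noclassfactor}(ii). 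Hence $(T,S)$ lifts to no $\mathfrak J_{c,d}$, which together with $(T,S)\in\mathfrak F_{c_0,d_0}$ completes the proof.

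The one step I expect to need care is the last one: the compression-by-$U$ half of the proof of Proposition \ref{bothfactor} is written there for the monic pencil, whereas here it must be run for the \emph{modified} pencil $\hat Q^{(c,d)}_{T,S}$ with an \emph{arbitrary} pair $(c,d)$---not only the pair $(c_0,d_0)$ used to construct $(T,S)$. This is precisely why one must observe, already implicit in \eqref{eq:BtwoT}--\eqref{eq:BtwoS}, that the $\alpha^2$- and $\beta^2$-coefficients of $Q_{T,S}$ are the constant (shift-invariant) operators $B_{2,0}(T)$, $B_{0,2}(S)$: subtracting $\tfrac1{c^2}B_{2,0}(T)+\tfrac1{d^2}B_{0,2}(S)$ subtracts $\tfrac1{c^2}B_{2,0}+\tfrac1{d^2}B_{0,2}$ from each diagonal block $Q(\alpha+j,\beta+k)$, so the $(j,k)=(0,0)$ block is exactly the pencil that Theorem \ref{noclassfactor}(ii) forbids from factoring. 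Everything else is bookkeeping over Theorems \ref{liftthm}, \ref{noclassfactor} and Propositions \ref{shiftconstruct}, \ref{bothfactor}.
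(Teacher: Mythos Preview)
Your proof is correct and follows essentially the same route as the paper's one-line proof, which simply cites Propositions \ref{shiftconstruct}, \ref{bothfactor} and Theorem \ref{noclassfactor}. Your write-up is more careful on the one genuinely nontrivial point the paper glosses over: Proposition \ref{bothfactor} is stated only for the particular parameters $(c_0,d_0)$ used in the construction, whereas the argument here must rule out factorization of $\hat Q^{(c,d)}_{T,S}$ for \emph{every} $(c,d)$; you correctly justify this by observing from \eqref{eq:BtwoT}--\eqref{eq:BtwoS} that the quadratic coefficients $\tilde B_{2,0},\tilde B_{0,2}$ act as $B_{2,0}\otimes I$ and $B_{0,2}\otimes I$ block-diagonally, so the compression-by-$U$ step carries over verbatim for any $(c,d)$.
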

\begin{proof}
The proof follows from an application of Propositions \ref{shiftconstruct} and \ref{bothfactor} and Theorem \ref{noclassfactor}.
\end{proof}
\section{Spectral Considerations and 3-Symmetric Operator Tuples}
\label{sec:spec}
Given a 2-tuple of 3-isometries in a class $\mathfrak{F}_{c,d}$ that lifts to a 2-tuple of commuting Jordan operators we will first show some control over the joint spectrum of the Jordan 2-tuple.  Secondly,  we will establish, by a holomorphic functional calculus argument, a lifting theorem analogous to Theorem \ref{liftthm} holds for 3-symmetric 2-tuples.

\subsection{Spectral Considerations}
 Let $\sigma_{\tay}(T)$ denote the Taylor spectrum of the tuple $T$ of operators on a Hilbert space. For an inviting exposition of the Taylor joint spectrum see \cite{Curto}.

\begin{prop}
 \label{prop:spectral inclusion}
   Suppose $T$ is a 2-tuple of invertible operators and $c,d>0$. If $T$ lifts to a 2-tuple $J\in \mathfrak{J}_{c,d}$, then $\sigma_{\tay}(T)\subset \sigma_{\tay}(J)$.  Moreover, in this case there exists a 2-tuple $\mathscr{J} \in \mathfrak{J}_{c,d}$ such that $T$ lifts to $\mathscr{J}$ and $\sigma_{\tay}(T)=\sigma_{\tay}(\mathscr{J})$. 
\end{prop}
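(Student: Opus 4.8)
The plan is to establish the two assertions in turn: the first is a soft spectral-mapping argument, and the second a spectral-measure reduction applied to an arbitrary lift.

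For the inclusion $\sigma_{\tay}(T)\subset\sigma_{\tay}(J)$: since $V$ is an isometry with $VT_i=J_iV$, the subspace $M=VH$ is jointly invariant for $J$ and $T$ is unitarily equivalent to $J|_M$. I would first record the general fact that $\sigma_{\tay}(S|_M)\subset\widehat{\sigma_{\tay}(S)}$ (polynomial hull) for any commuting tuple $S$ and invariant subspace $M$: for every polynomial $p$ in two variables one has $p(S)|_M=p(S|_M)$, so the spectral radius of $p(S|_M)$ is at most that of $p(S)$, and Taylor's spectral mapping theorem on both sides rewrites this as $\|p\|_{\sigma_{\tay}(S|_M)}\le\|p\|_{\sigma_{\tay}(S)}$ for all $p$, i.e.\ the hull inclusion. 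Next, $\sigma_{\tay}(J)=\sigma_{\tay}(U_1,U_2)\subset\mathbb{T}^2$: writing $J_i=\tilde U_i+\mathcal N_i$ with $\tilde U_i=\mathrm{diag}(U_i,U_i,U_i)$ and $\mathcal N_i$ the square-zero off-diagonal part, the four operators $\tilde U_1,\tilde U_2,\mathcal N_1,\mathcal N_2$ commute, the last two coordinates of $\sigma_{\tay}(\tilde U_1,\tilde U_2,\mathcal N_1,\mathcal N_2)$ project onto $\sigma(\mathcal N_i)=\{0\}$ and the first two onto $\sigma_{\tay}(\tilde U_1,\tilde U_2)=\sigma_{\tay}(U_1,U_2)$, so applying spectral mapping to $(a,b,c,d)\mapsto(a+c,b+d)$ gives $\sigma_{\tay}(J)=\sigma_{\tay}(U_1,U_2)\subset\sigma(U_1)\times\sigma(U_2)\subset\mathbb{T}^2$. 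Symmetrically $\sigma_{\tay}(T)\subset\mathbb{T}^2$, because $T_i$ and $T_i^{-1}$ are both $3$-isometries (the defining identity is preserved under $T\mapsto T^{-1}$), so $\|T_i^{\pm n}\|$ grows at most polynomially, forcing $\sigma(T_i)\subset\mathbb{T}$. Finally every point $\lambda$ of $\mathbb{T}^2$ is a peak point for the bidisk algebra (the entire function $z\mapsto\exp\big(\sum_i(\overline{\lambda_i}z_i-1)\big)$ peaks at $\lambda$), so any closed set $X\subset\mathbb{T}^2$ satisfies $\widehat X\cap\mathbb{T}^2=X$; with $X=\sigma_{\tay}(J)$ this yields $\sigma_{\tay}(T)\subset\widehat{\sigma_{\tay}(J)}\cap\mathbb{T}^2=\sigma_{\tay}(J)$.

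For the second assertion, fix a lift $J\in\mathfrak{J}_{c,d}$ built from commuting unitaries $U_1,U_2$ on a space $E$, let $\mathsf E$ be the joint spectral measure of $(U_1,U_2)$, and set $\Omega=\sigma_{\tay}(T)$, $P=\mathsf E(\Omega)$, $E_0=PE$. Since $P$ commutes with $U_1,U_2$ and the off-diagonal entries of $J_1,J_2$ are scalar multiples of $U_1,U_2$, the projection $P\oplus P\oplus P$ reduces $J_1$ and $J_2$, so $\mathscr J:=J|_{E_0\oplus E_0\oplus E_0}$ again has the form \eqref{eq:Jforms} with unitaries $U_i|_{E_0}$, i.e.\ $\mathscr J\in\mathfrak{J}_{c,d}$, and by the nilpotent-perturbation computation $\sigma_{\tay}(\mathscr J)=\sigma_{\tay}(U_1|_{E_0},U_2|_{E_0})\subset\overline\Omega=\Omega=\sigma_{\tay}(T)$. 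The crux is to show $\mathrm{ran}\,V\subset E_0\oplus E_0\oplus E_0$. Cover the open set $\mathbb{C}^2\setminus\Omega$ by countably many open balls $B_k$ with $\overline{B_k}\cap\Omega=\emptyset$, put $Q_k=\mathsf E(B_k)\oplus\mathsf E(B_k)\oplus\mathsf E(B_k)$ and $V_k=Q_kV$; since $Q_k$ commutes with $J_1,J_2$, the operator $V_k$ intertwines $T$ with $J|_{Q_k(E^{\oplus3})}$, whose Taylor spectrum lies in $\overline{B_k}$, hence is disjoint from $\sigma_{\tay}(T)$. By the Rosenblum-type theorem for commuting tuples — an operator intertwining two commuting tuples with disjoint Taylor spectra is zero — we get $V_k=0$. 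Summing over $k$ and using $\mathsf E(\mathbb{C}^2\setminus\sigma_{\tay}(U_1,U_2))=0$ gives $\mathsf E(\mathbb{C}^2\setminus\Omega)^{\oplus3}V=0$, i.e.\ $(I-P^{\oplus3})V=0$, so $V$ corestricts to an isometry $H\to E_0^{\oplus3}$ intertwining $T$ and $\mathscr J$. Thus $T$ lifts to $\mathscr J$, and combining $\sigma_{\tay}(\mathscr J)\subset\sigma_{\tay}(T)$ with the first part applied to the lift $\mathscr J$ (which gives $\sigma_{\tay}(T)\subset\sigma_{\tay}(\mathscr J)$) yields equality.

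The main obstacle is the crux step, namely the vanishing of intertwiners of commuting tuples with disjoint Taylor spectra. I expect to handle it by choosing a function $g$ holomorphic on a neighbourhood of $\sigma_{\tay}(T)\cup\sigma_{\tay}(J|_{Q_k(E^{\oplus3})})$ that is $\equiv 1$ near the former and $\equiv 0$ near the latter; then $g(T)=I$, $g(J|_{Q_k(E^{\oplus3})})=0$, and compatibility of the Taylor functional calculus with the intertwiner $V_k$ forces $V_k=V_kg(T)=g(J|_{Q_k(E^{\oplus3})})V_k=0$. That compatibility, the polynomial-hull inequality for invariant-subspace restrictions, and the peak-point fact for $\mathbb{T}^2$ are the three inputs that need to be stated with care; the remainder is routine bookkeeping with the joint spectral measure of $(U_1,U_2)$.
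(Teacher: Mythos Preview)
Your argument is correct but follows a genuinely different path from the paper in both halves.

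For the inclusion $\sigma_{\tay}(T)\subset\sigma_{\tay}(J)$, the paper reduces to approximate point spectra: it invokes Wrobel's theorem that $\ext(\co(\sigma_{\tay}))=\ext(\co(\sigma_{ap}))$, combines this with the observation that for compact $K\subset\mathbb T^2$ one has $\ext(\co(K))=K$ to conclude $\sigma_{\tay}=\sigma_{ap}$ for both $T$ and $J$, and then checks directly that an isometric intertwiner carries approximate eigenvectors forward. Your route instead passes through the polynomial hull: you use the spectral-radius inequality for restrictions to invariant subspaces to get $\sigma_{\tay}(T)\subset\widehat{\sigma_{\tay}(J)}$, and then a peak-point argument on $\mathbb T^2$ to strip the hull. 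Both arguments rest on the same geometric fact (every point of $\mathbb T^2$ is ``extremal'' in the relevant sense), but yours avoids the appeal to Wrobel's theorem at the price of needing Taylor's spectral mapping for polynomials.

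For the equality $\sigma_{\tay}(T)=\sigma_{\tay}(\mathscr J)$, the paper works by hand: for each $p\notin\sigma_{\tay}(T)$ it builds a product function $f_p(z_1,z_2)=f_{p_1}(z_1)f_{p_2}(z_2)$ peaking near $p$ and small on $\sigma_{\tay}(T)$, observes that $f_p^n(J)V\to 0$ via the single-variable holomorphic calculus, and then unpacks the block structure of $J$ to show that the joint spectral projection $E(O_p)$ kills each component $V_\ell$ of $V$. A limiting argument over compact exhaustions of $\sigma_{\tay}(T)^c$ finishes. You replace this entire construction with one stroke of the multivariable Rosenblum theorem (equivalently, the intertwining property of the Taylor functional calculus applied to an idempotent separating the two spectra), which immediately forces $\mathsf E(B_k)^{\oplus 3}V=0$ on each ball disjoint from $\sigma_{\tay}(T)$. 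Your approach is cleaner and coordinate-free, and it bypasses the block-by-block analysis of $V_0,V_1,V_2$; the paper's approach is more elementary in that it never invokes the Taylor calculus beyond the product form $f_1(z_1)f_2(z_2)$, so it stays within the single-variable Riesz calculus throughout.
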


Let $U=(U_1,U_2)$ be the unitary commuting tuple appearing in ${J}=({J}_1,{J}_2)$. By the form of ${J}$ it is easy to see,
\[\sigma(U_i)=\sigma({J}_i).\]
However a result involving the Taylor spectrum of $U$ and $J$ can be achieved.

\begin{prop}\label{specUequalspecJ} For Jordan 2-tuple of the form \eqref{eq:Jforms}
\[\sigma_{\tay}(U)=\sigma_{\tay}(J)\] where $U=(U_1,U_2)$ is the 2-tuple of unitary operators appearing in $J=(J_1,J_2)$. 
\end{prop}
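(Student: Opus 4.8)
The plan is to exploit the block-triangular structure of $J=(J_1,J_2)$ given in \eqref{eq:Jforms} to compute the Taylor spectrum directly via the Koszul complex, reducing everything to the Koszul complex of the commuting unitary tuple $U=(U_1,U_2)$. Recall that $\sigma_{\tay}(J)$ is the set of $\lambda=(\lambda_1,\lambda_2)\in\mathbb C^2$ for which the Koszul complex $K_\bullet(J-\lambda)$ on $E^{\oplus 3}$ (where $E$ is the space $U_1,U_2$ act on, and $J_i$ acts on $E\oplus E\oplus E$) fails to be exact. Writing $J_i-\lambda_i = (U_i-\lambda_i)\otimes I_3 + \lambda_i' N_i$ for the appropriate nilpotent $N_i$ (with $N_1,N_2$ as in \eqref{Jform}, up to the scalars $c,d$), one sees that $J_i-\lambda_i$ is, with respect to the $3\times 3$ block decomposition, upper triangular with diagonal entries all equal to $U_i-\lambda_i$. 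The strategy is then a standard one: a commuting tuple which is ``triangular'' with diagonal blocks given by a fixed commuting tuple $V=(V_1,V_2)$ has Taylor spectrum contained in $\sigma_{\tay}(V)$, and in the present situation the containment is an equality because the diagonal appears with multiplicity and the off-diagonal (nilpotent) part does not enlarge the spectrum.

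Concretely, first I would set up the short exact sequence of Hilbert-space complexes. Let $M\subset E^{\oplus 3}$ be the invariant subspace $0\oplus E\oplus E$ (the last two coordinates), which is co-invariant-compatible with both $J_1-\lambda_1$ and $J_2-\lambda_2$ since both nilpotents $N_1,N_2$ map into the first coordinate and kill the first coordinate. On $M$ the tuple acts as $U\oplus U$ (two copies, shifted by $\lambda$), and on the quotient $E^{\oplus 3}/M\cong E$ it acts as $U-\lambda$. This gives a short exact sequence of Koszul complexes
\[
0 \longrightarrow K_\bullet\big((U\oplus U)-\lambda\big) \longrightarrow K_\bullet(J-\lambda) \longrightarrow K_\bullet(U-\lambda)\longrightarrow 0,
\]
and the associated long exact sequence in homology shows that if $\lambda\notin\sigma_{\tay}(U)$, then $K_\bullet((U\oplus U)-\lambda)$ and $K_\bullet(U-\lambda)$ are both exact (Taylor spectrum is stable under finite direct sums), hence $K_\bullet(J-\lambda)$ is exact; that is, $\sigma_{\tay}(J)\subseteq\sigma_{\tay}(U)$. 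For the reverse inclusion, the same long exact sequence shows that if $\lambda\in\sigma_{\tay}(U)$ then $K_\bullet(U-\lambda)$ is not exact; tracing the connecting maps (or, more simply, observing that $M$ itself is a joint invariant subspace on which $J$ restricts to $U\oplus U$ so that $\sigma_{\tay}(U)=\sigma_{\tay}(U\oplus U)$ must meet $\sigma_{\tay}(J)\cup\sigma_{\tay}(\text{quotient})$, and both pieces are $\sigma_{\tay}(U)$) forces $\lambda\in\sigma_{\tay}(J)$. The cleanest way to phrase the reverse inclusion is via the general fact (projection property of the Taylor spectrum under short exact sequences) that $\sigma_{\tay}(J)\subseteq \sigma_{\tay}(U\oplus U)\cup\sigma_{\tay}(U)=\sigma_{\tay}(U)$ and $\sigma_{\tay}(U)=\sigma_{\tay}(U\oplus U)\subseteq \sigma_{\tay}(J)\cup\sigma_{\tay}(U)$ — but since we want equality with $\sigma_{\tay}(J)$ and not merely a union, I would instead argue that $U\oplus U$ is a compression of $J$ to a \emph{reducing-up-to-nilpotent} piece and invoke that adding a commuting nilpotent perturbation supported off the diagonal does not change the Taylor spectrum.

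The main obstacle, and the step deserving the most care, is precisely this last point: showing that the nilpotent off-diagonal perturbation does not shrink or enlarge $\sigma_{\tay}$, i.e., that the long exact homology sequence actually forces exactness of $K_\bullet(J-\lambda)$ to \emph{fail} when $\lambda\in\sigma_{\tay}(U)$ rather than merely permitting it. The resolution is that in the short exact sequence above, the sub- and quotient complexes have homology that is, in each degree, a direct sum of copies of the homology of $K_\bullet(U-\lambda)$ (three copies total, counted across sub and quotient), and the connecting homomorphisms are built from the nilpotent $N_i$'s, which are \emph{nilpotent}; a connecting map that is nilpotent cannot be an isomorphism between nonzero spaces, so it cannot kill all the homology. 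Thus some homology of $K_\bullet(J-\lambda)$ survives. I would make this precise by choosing a specific nonzero homology class for $K_\bullet(U-\lambda)$ and chasing it through the diagram. Finally, combining $\sigma_{\tay}(J)=\sigma_{\tay}(U)$ with the spectral-permanence observation already noted in the text that $\sigma(U_i)=\sigma(J_i)$ closes out the statement.
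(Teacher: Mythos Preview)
Your choice of invariant subspace is backwards. With $J_i$ as in \eqref{eq:Jforms} the nilpotent parts map the second and third summands \emph{into} the first, so $M=0\oplus E\oplus E$ is \emph{not} $J$-invariant (e.g.\ $J_1(0,h,0)=(cU_1h,U_1h,0)$). The correct invariant subspace is $E\oplus 0\oplus 0$, on which $J$ restricts to $U$; the quotient is then $U\oplus U$ (genuinely diagonal, since both nilpotent blocks land in the first coordinate which has been killed). With this correction the long exact sequence in Koszul homology gives $\sigma_{\tay}(J)\subseteq\sigma_{\tay}(U)\cup\sigma_{\tay}(U\oplus U)=\sigma_{\tay}(U)$, which is precisely the upper-triangular inclusion the paper imports from \cite{Curto}.

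For the reverse inclusion your stated argument does not work: the connecting homomorphism $\delta_k:H_k(\mathrm{quot})\to H_{k-1}(\mathrm{sub})$ runs between different spaces, so calling it ``nilpotent'' is meaningless, and a map induced by a nilpotent operator can perfectly well be an isomorphism between distinct nonzero groups. There is, however, a correct long-exact-sequence argument you missed. Assume $\lambda\notin\sigma_{\tay}(J)$, so $H_*(\mathrm{tot})=0$. The two ends of the sequence give $H_2(\mathrm{sub})=0$ and $H_0(\mathrm{quot})=0$; since $\mathrm{sub}$ contributes one copy of $H_*(U-\lambda)$ and $\mathrm{quot}$ two, this says $H_2(U-\lambda)=0$ and $H_0(U-\lambda)=0$. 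The middle isomorphisms $\delta_2:H_2(\mathrm{quot})\xrightarrow{\sim}H_1(\mathrm{sub})$ then read $H_2(U-\lambda)^{\oplus 2}\cong H_1(U-\lambda)$, forcing $H_1(U-\lambda)=0$. Thus $\lambda\notin\sigma_{\tay}(U)$, and equality follows. Note this uses the specific $1$-versus-$2$ multiplicities and the finite length of the Koszul complex, not any ``nilpotency'' of $\delta$.

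The paper's route is shorter and conceptually different. It simply observes that the inclusion $E\hookrightarrow E^{\oplus 3}$ into the first summand is an isometry intertwining $U$ with $J$, i.e.\ $U$ lifts to $J$ in the sense of Proposition~\ref{prop:spectral inclusion}, and quotes that proposition for $\sigma_{\tay}(U)\subseteq\sigma_{\tay}(J)$ (the proof there goes through approximate point spectrum and Wrobel's theorem, using that both spectra lie on $\mathbb T^2$). So your approach, once repaired, is a self-contained Koszul-complex proof, while the paper leverages the lifting framework already in place; yours has the advantage of not needing the $\mathbb T^2$ hypothesis or Wrobel, the paper's the advantage of being a two-line citation.
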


\begin{proof}
By Proposition \ref{prop:spectral inclusion},  $\sigma_{\tay}(U)\subset \sigma_{\tay}(J)$. On the other hand,  as seen in \cite{Curto}, for operators  $A$, $B$ and $C$ on Hilbert space, 
\[ \sigma_{\tay}\left ( \begin{pmatrix}A& C\\ 0 & B\end{pmatrix}\right)\subseteq \sigma_{\tay}(A)\cup \sigma_{\tay}(B).\] 
In our case this shows that $\sigma_{\tay}(J)\subseteq \sigma_{\tay}(U)$ and the proof is complete.
\end{proof}

The proof of Propostion \ref{prop:spectral inclusion} occupies the remainder of this subsection and is broken down into a series of subresults.

For a compact set $K$, let $\co(K)$ denote the convex hull of $K$. If $K\subset \mathbb C^n$ is compact, then, by Caratheodory's Theorem,  $\co(K)$ is also compact (and hence closed).  For a closed convex set $K,$ let $\ext(K)$ denote the set of extreme points of the $K$.

\begin{lemma}
\label{lem:ext of T}
  The set of extreme points of $\co(\mathbb T^2)$ is $\mathbb T^2$.
\end{lemma}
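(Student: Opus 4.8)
The plan is to show that $\co(\mathbb{T}^2) = \{(z_1, z_2) \in \mathbb{C}^2 : |z_1| \le 1,\ |z_2| \le 1\}$, the closed bidisk, and then identify its extreme points. First I would establish the set equality: since $\mathbb{T}^2$ is contained in the bidisk, which is convex, we get $\co(\mathbb{T}^2) \subseteq \overline{\mathbb{D}} \times \overline{\mathbb{D}}$. For the reverse inclusion, any point $(z_1, z_2)$ in the bidisk can be written as a convex combination of points of $\mathbb{T}^2$: write $z_1 = \frac{1}{2}(w_1 + w_1')$ and $z_2 = \frac{1}{2}(w_2 + w_2')$ with each $w_i, w_i' \in \mathbb{T}$ (possible since a point in $\overline{\mathbb{D}}$ is the midpoint of a chord of $\mathbb{T}$), so that $(z_1, z_2) = \frac{1}{4}\big((w_1,w_2) + (w_1,w_2') + (w_1',w_2) + (w_1',w_2')\big)$, a convex combination of four points of $\mathbb{T}^2$.

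Next, I would identify the extreme points of the bidisk. A point $(z_1, z_2)$ with $|z_1| < 1$ is not extreme, since it is the midpoint of a segment in the $z_1$-direction lying inside the bidisk; similarly if $|z_2| < 1$. Hence any extreme point must satisfy $|z_1| = |z_2| = 1$, i.e.\ lies in $\mathbb{T}^2$. Conversely, every point of $\mathbb{T}^2$ is extreme: if $(z_1, z_2) = t(a_1, a_2) + (1-t)(b_1, b_2)$ with $0 < t < 1$ and both points in the bidisk, then $z_1 = ta_1 + (1-t)b_1$ with $|a_1|, |b_1| \le 1 = |z_1|$ forces $a_1 = b_1 = z_1$ by strict convexity of $\overline{\mathbb{D}}$ (the unit disk has no boundary segments), and likewise $a_2 = b_2 = z_2$. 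Therefore $\ext(\co(\mathbb{T}^2)) = \mathbb{T}^2$.

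I do not anticipate a genuine obstacle here; the only point requiring a little care is the reverse inclusion $\overline{\mathbb{D}} \times \overline{\mathbb{D}} \subseteq \co(\mathbb{T}^2)$, and the explicit four-point convex combination above handles it cleanly. One could alternatively invoke the fact that $\co(\mathbb{T}^2)$ is compact (Carathéodory, as already noted in the excerpt) together with the Krein–Milman theorem to argue that $\co(\mathbb{T}^2)$ is the closed convex hull of its extreme points, but the direct computation is more transparent and self-contained.
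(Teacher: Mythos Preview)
Your argument is correct. The paper proceeds differently: it invokes two general facts, namely that the convex hull of a Cartesian product equals the Cartesian product of the convex hulls, and that the extreme points of a Cartesian product of convex sets are precisely the Cartesian product of their extreme points. From these, together with $\ext(\co(\mathbb T))=\mathbb T$, the lemma follows immediately. Your approach is more explicit and self-contained: you compute $\co(\mathbb T^2)=\overline{\mathbb D}\times\overline{\mathbb D}$ directly via the four-point averaging trick and then identify the extreme points by hand using strict convexity of the disk. The paper's route is shorter and generalizes instantly to $\mathbb T^n$ (or any product of compact sets whose convex hulls have known extreme points), while yours avoids appealing to the product formulas and makes every step visible in the two-dimensional case.
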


\begin{proof} 
 The convex hull of a cartesian product is the cartesian product of the convex hulls.  The set of extreme  points of a cartesian product is the cartesian product of the extreme points. Since the extreme points of $\co(\mathbb T)=\mathbb T$ the result follows. 
\end{proof}

\begin{lemma} If $K$ is a compact subset of $\mathbb{T}^2\subset \mathbb{C}^2,$ then 
\[\ext(\co(K))=K. \]
\end{lemma}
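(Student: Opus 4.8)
The plan is to prove the two inclusions $\ext(\co(K)) \subseteq K$ and $K \subseteq \ext(\co(K))$ separately, leaning on the preceding lemma that $\ext(\co(\mathbb{T}^2)) = \mathbb{T}^2$.

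First I would dispose of $\ext(\co(K)) \subseteq K$ by the standard argument. Given $x \in \co(K)$, Caratheodory's theorem (applicable since $K \subset \mathbb{C}^2$ is compact) lets me write $x = \sum_{i=1}^m t_i k_i$ with $k_i \in K$, $t_i > 0$, $\sum_i t_i = 1$, and $m \le 3$. If $m = 1$ then $x = k_1 \in K$. If $m \ge 2$, pick any index $j$; then $0 < t_j < 1$ and $x = t_j k_j + (1 - t_j) y$, where $y = (1 - t_j)^{-1}\sum_{i \ne j} t_i k_i \in \co(K)$. Were $x$ extreme in $\co(K)$, this would force $k_j = x$; since $j$ was arbitrary, $x = k_1 \in K$. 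Hence every extreme point of $\co(K)$ lies in $K$.

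For the reverse inclusion I would exploit the monotonicity of the convex hull: since $K \subseteq \mathbb{T}^2$ we have $\co(K) \subseteq \co(\mathbb{T}^2)$. Fix $x \in K \subseteq \mathbb{T}^2 = \ext(\co(\mathbb{T}^2))$. If $x = t a + (1 - t) b$ with $a, b \in \co(K)$ and $0 < t < 1$, then $a, b \in \co(\mathbb{T}^2)$ as well, so extremality of $x$ in $\co(\mathbb{T}^2)$ yields $a = b = x$. Thus $x$ is extreme in $\co(K)$, giving $K \subseteq \ext(\co(K))$, and combining the two inclusions finishes the proof.

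I do not expect any real obstacle here; the only spot needing a touch of care is the grouping step in the first inclusion — collapsing a Caratheodory representation to a two-term convex combination so that the definition of an extreme point can be applied directly — while compactness of $K$ enters only through Caratheodory's theorem and the resulting compactness (hence closedness) of $\co(K)$.
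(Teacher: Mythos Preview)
Your argument is correct and follows essentially the same route as the paper: both directions are handled the same way, with the paper simply citing the inclusion $\ext(\co(K))\subset K$ as a standard fact for compact $K\subset\mathbb{C}^n$ while you spell it out via Carath\'eodory. One harmless slip: in $\mathbb{C}^2\cong\mathbb{R}^4$ Carath\'eodory gives $m\le 5$, not $m\le 3$, but your argument only uses finiteness of $m$, so nothing is affected.
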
 

\begin{proof} 
Since $K\subset\mathbb T^2$, if $z\in K$, then $z$ is an extreme point of $\co(\mathbb T^2)$ by Lemma \ref{lem:ext of T} and therefore of $\co(K)$.  Hence $K\subset \ext(\co(K))$.  On the other hand, $\ext(\co(K))\subset K$ for any compact subset $K$  of $\mathbb C^n$. 
\end{proof}

\begin{definition}
The joint approximate point spectrum for a 2-tuple $T$ is defined to be the set of points $\lambda\in \mathbb{C}^2$ such that there exist unit vectors $\{x_k\}$ such that 
\[\|(T_i-\lambda_i)x_k\|\rightarrow 0 \text{ for }i=1, 2.\]
We denote joint approximate point spectrum as $\sigma_{ap}(T).$
\end{definition}

The following two lemmas are well known. Among the many references, see \cite{Curto,Choi}. The theorem following these lemmas can be found in a paper of Wrobel \cite{Wrobel}.

\begin{lemma}
  \label{lem:ap in T2}
 The approximate point spectrum of a commuting tuple $T$ of operators on Hilbert space  lies in the Taylor spectrum of $T$.
\end{lemma}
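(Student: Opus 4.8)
The plan is to argue by contradiction using the Koszul complex description of the Taylor spectrum; the statement is standard (see \cite{Curto}), but we indicate the argument. Suppose $\lambda=(\lambda_1,\lambda_2)\in\sigma_{ap}(T)\setminus\sigma_{\tay}(T)$. Replacing $T$ by $T-\lambda=(T_1-\lambda_1,T_2-\lambda_2)$ — which shifts both spectra by $\lambda$ — we may assume $\lambda=0$, so that $0\notin\sigma_{\tay}(T)$ while there are unit vectors $x_k\in H$ with $\|T_ix_k\|\to 0$ for $i=1,2$.

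First I would recall the bottom of the Koszul complex. Writing $\Lambda^p$ for the $p$-th exterior power of $\mathbb C^2$ with orthonormal basis $e_1,e_2$, the Koszul complex of $T$ on $H$ begins
\[
0\longrightarrow H\xrightarrow{\ d_0\ }\mathbb C^2\otimes H\xrightarrow{\ d_1\ }\Lambda^2\otimes H\longrightarrow 0,
\]
where $d_0h=e_1\otimes T_1h+e_2\otimes T_2h$ and $d_1$ is the corresponding wedge map; one checks $d_1d_0=0$ precisely because $T_1$ and $T_2$ commute. By definition $0\notin\sigma_{\tay}(T)$ means this complex (together with its higher stages, which are trivial here) is exact.

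The key point is that exactness forces $d_0$ to be bounded below. Indeed, exactness at $H$ gives $\ker d_0=0$, and exactness at $\mathbb C^2\otimes H$ gives $\operatorname{ran} d_0=\ker d_1$, which is closed; an injective bounded operator with closed range is bounded below by the open mapping theorem, so there is $c>0$ with
\[
\|T_1h\|^2+\|T_2h\|^2=\|d_0h\|^2\ge c\,\|h\|^2,\qquad h\in H.
\]
Applying this to the unit vectors $x_k$ gives $\|T_1x_k\|^2+\|T_2x_k\|^2\ge c$ for all $k$, contradicting $\|T_ix_k\|\to 0$. Hence $\sigma_{ap}(T)\subseteq\sigma_{\tay}(T)$, and the same argument applies verbatim to $n$-tuples with $\mathbb C^2$ replaced by $\mathbb C^n$.

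The only mildly delicate step — which I would be careful to state correctly rather than regard as computational — is the passage from exactness of a bounded complex of Hilbert spaces to closed range of the differentials; here it is immediate, since $\operatorname{ran} d_0=\ker d_1$ is visibly closed. Everything else is routine.
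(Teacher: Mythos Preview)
Your argument is correct. The paper does not actually prove this lemma; it simply records it as well known and refers the reader to \cite{Curto,Choi}. What you have written is the standard Koszul-complex justification: Taylor regularity of $T-\lambda$ forces the first Koszul differential $d_0$ to be injective with closed range (since $\operatorname{ran} d_0=\ker d_1$), hence bounded below, which rules out an approximate null sequence for $(T_1-\lambda_1,\dots,T_n-\lambda_n)$. So there is no discrepancy in approach to report---you have supplied a proof where the paper gives only a citation.
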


\begin{lemma}
  The Taylor spectrum of a commuting tuple $T$ of operators on Hilbert space is nonempty and compact. 
\end{lemma}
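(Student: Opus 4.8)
The plan is to treat the two assertions, compactness and nonemptiness of $\sigma_{\tay}(T)$, separately, following the classical development due to Taylor. Recall that $\lambda\in\mathbb C^n$ lies outside $\sigma_{\tay}(T)$ exactly when the Koszul complex of $(T_1-\lambda_1,\dots,T_n-\lambda_n)$ on $\bigwedge^\bullet\mathbb C^n\otimes H$ is exact. For boundedness I would observe that if $|\lambda_i|>\|T_i\|$ for some $i$, then $T_i-\lambda_i$ is invertible by the single-variable theory, and a commuting tuple one of whose entries is invertible has a contractible Koszul complex --- the homotopy is interior multiplication by $e_i$ followed by $(T_i-\lambda_i)^{-1}$ --- so $\lambda\notin\sigma_{\tay}(T)$ and hence $\sigma_{\tay}(T)\subseteq\prod_i\{|z_i|\le\|T_i\|\}$. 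For closedness I would use that the Taylor resolvent set is open: an exact bounded complex of Hilbert spaces is automatically split (closed subspaces being complemented), and split-exactness of a finite complex of Banach spaces persists under a small perturbation of its differentials; since the Koszul differentials depend affinely, in particular norm-continuously, on $\lambda$, a whole neighbourhood of any resolvent point remains in the resolvent set. Being closed and bounded, $\sigma_{\tay}(T)$ is compact.

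For nonemptiness there are two natural routes. The slick one invokes the Taylor analytic functional calculus: for any open $U\supseteq\sigma_{\tay}(T)$ there is a unital algebra homomorphism from the holomorphic functions on $U$ into $\sB(H)$ carrying $1$ to $I$; were $\sigma_{\tay}(T)$ empty one could take $U=\emptyset$, whose ring of holomorphic functions is the zero ring, forcing $I=0$ and hence $H=0$ --- a contradiction once $H\neq 0$. The more elementary route is the projection property: the image of $\sigma_{\tay}(T)$ under the $i$-th coordinate projection equals $\sigma(T_i)$. The inclusion into $\sigma(T_i)$ is routine from the definition; for the reverse inclusion one fixes $\lambda_i\in\sigma(T_i)$ and, after a translation, analyses the other $T_j$ acting on $\ker(T_i)$ and on $H/\overline{\ran(T_i)}$, obtaining a Taylor-spectrum problem in $n-1$ variables and inducting down to the single-operator fact that $\sigma(T_i)\neq\emptyset$. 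Either way $\sigma_{\tay}(T)\neq\emptyset$.

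The main obstacle is nonemptiness: the Taylor resolvent is not a single vector-valued holomorphic function, so the one-line Liouville argument that works for a single operator does not transcribe, and one must appeal either to the (substantial) functional-calculus machinery or to the projection/slicing reduction above, both of which are themselves theorems of Taylor. Since this lemma enters the paper only as a standard fact underlying Proposition~\ref{prop:spectral inclusion}, I would in practice simply cite Taylor's original work, or the survey \cite{Curto}, rather than reproduce the argument here.
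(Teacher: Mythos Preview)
Your proposal is correct, and in fact goes well beyond what the paper does: the paper gives no proof at all, stating just before this lemma that it is well known and citing \cite{Curto} (and \cite{Choi}) as references. Your concluding remark---that in practice one would simply cite Taylor or the survey \cite{Curto}---is exactly the route the paper takes, so your instinct matches the author's choice precisely.
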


\begin{thm}
\label{thm:Wrobel} If $T$ is a commuting tuple of operators on Hilbert space, then
\[\ext(\co(\sigma_{\tay}(T)))=\ext(\co(\sigma_{ap}(T))).\]
\end{thm}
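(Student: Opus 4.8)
The plan is to derive the claim from the \emph{equivalent} assertion that the convex hulls themselves coincide,
\[
  \co(\sigma_{\tay}(T))=\co(\sigma_{ap}(T)),
\]
which is equivalent because a compact set $K$ satisfies $\co K=\co(\ext\co K)$. One inclusion is free: Lemma~\ref{lem:ap in T2} gives $\sigma_{ap}(T)\subseteq\sigma_{\tay}(T)$, hence $\co(\sigma_{ap}(T))\subseteq\co(\sigma_{\tay}(T))$. For the reverse inclusion it suffices to prove that every extreme point of $\co(\sigma_{\tay}(T))$ lies in $\sigma_{ap}(T)$; call this assertion $(\star)$. Indeed, granting $(\star)$: the set $\co(\sigma_{\tay}(T))$ is compact (the convex hull of a compact subset of $\mathbb{C}^{n}$ is compact by Carath\'eodory's theorem) and convex, hence equals the convex hull of its own extreme points, and by $(\star)$ those lie in $\sigma_{ap}(T)$, so $\co(\sigma_{\tay}(T))\subseteq\co(\sigma_{ap}(T))$. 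We will also use that $\sigma_{ap}(T)$ is closed, which is routine: if $\lambda\notin\sigma_{ap}(T)$ then $\sum_{j}\|(T_{j}-\lambda_{j})x\|^{2}\geq c^{2}\|x\|^{2}$ for some $c>0$ and all $x$, and this bound survives small perturbations of $\lambda$.

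To prove $(\star)$, fix an extreme point $\lambda$ of $\co(\sigma_{\tay}(T))$; then $\lambda\in\sigma_{\tay}(T)$, since the extreme points of the convex hull of a compact set lie in that set. For a single operator $A$ the analogue of $(\star)$ is the classical chain $\ext(\co\sigma(A))\subseteq\partial\sigma(A)\subseteq\sigma_{ap}(A)$; for tuples the intermediate boundary step fails — for the unilateral shift $S$ one has $\sigma_{\tay}(S,S)=\{(z,z):|z|\le 1\}$, which has empty interior in $\mathbb{C}^{2}$ (so every point is a boundary point), whereas $\sigma_{ap}(S,S)=\{(z,z):|z|=1\}$ — so $(\star)$ must genuinely use extremality. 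The natural first step is to bring in a supporting real-linear functional: translate so that $\lambda=0$ and pick $w\in\mathbb{C}^{n}\setminus\{0\}$ with $\operatorname{Re}\sum_{j}w_{j}z_{j}\le 0$ for all $z\in\sigma_{\tay}(T)$, with equality at $z=0$. By the spectral mapping theorem for the Taylor spectrum applied to the holomorphic (indeed linear) map $z\mapsto\sum_{j}w_{j}z_{j}$,
\[
  \sigma\Big(\textstyle\sum_{j}w_{j}T_{j}\Big)=\Big\{\textstyle\sum_{j}w_{j}z_{j}:\ z\in\sigma_{\tay}(T)\Big\},
\]
a compact set on which $0$ attains the maximal real part; hence $0\in\partial\sigma(\sum_{j}w_{j}T_{j})\subseteq\sigma_{ap}(\sum_{j}w_{j}T_{j})$, so there are unit vectors $x_{k}$ with $\sum_{j}w_{j}T_{j}x_{k}\to 0$.

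The main obstacle is to upgrade this single scalar relation to the full system $T_{j}x_{k}\to 0$ for every $j$, i.e.\ to manufacture a genuine joint approximate eigenvector at $0$. A single supporting functional controls only one linear combination of the $T_{j}$, and at an extreme point the normal cone can be one-dimensional, so no further functionals are available without more work; this is where the real content of the theorem (due to Wrobel) lies. The route I would attempt: first reduce to the case that $\lambda$ is an \emph{exposed} point of $\co(\sigma_{\tay}(T))$ — legitimate since exposed points are dense in the extreme points by Straszewicz's theorem and $\sigma_{ap}(T)$ is closed — so that the inequality $\operatorname{Re}\sum_{j}w_{j}z_{j}<0$ becomes \emph{strict} off $z=0$; then analyze the $x_{k}$ via a compactness/ultrafilter argument, passing to the ultrapower $H_{\mathcal U}$ with its induced tuple $T^{\mathcal U}$, in which $0$ is an eigenvalue of $\sum_{j}w_{j}T_{j}^{\mathcal U}$, and exploiting strictness together with the spectral mapping theorem for $T^{\mathcal U}$ to force $0\in\sigma_{ap}(T)$. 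Making this last step precise — gaining control of the joint quantity $(T_{1}x_{k},\dots,T_{n}x_{k})$ rather than only one combination of its entries — is the delicate point, and for the exact argument I would follow \cite{Wrobel}.
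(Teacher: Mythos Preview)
The paper does not prove this theorem at all: it is stated as a result of Wrobel and simply cited (\emph{``The theorem following these lemmas can be found in a paper of Wrobel \cite{Wrobel}''}). So there is no proof in the paper to compare against; the paper's ``proof'' is precisely the citation you yourself invoke at the end.

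Your write-up is more than the paper offers: the reduction to $(\star)$ --- that every extreme point of $\co(\sigma_{\tay}(T))$ lies in $\sigma_{ap}(T)$ --- is correct and cleanly argued, and the supporting-functional step combined with the Taylor spectral mapping theorem is the right opening move. You also correctly isolate the genuine difficulty: passing from an approximate eigenvector for the single operator $\sum_j w_j T_j$ to a \emph{joint} approximate eigenvector for the tuple. But you do not actually carry out this step; the Straszewicz/ultrapower plan is only gestured at, and at the decisive point you write ``for the exact argument I would follow \cite{Wrobel}.'' That is not a proof --- it is, in the end, the same citation the paper makes. So as a self-contained argument your proposal has a gap exactly where the real content lies; as a treatment of the theorem for the purposes of this paper, it matches (indeed exceeds) what the paper itself does, since both ultimately rest on Wrobel.
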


\begin{lemma}
 \label{lem:ap T vs J}
 Suppose $T$ is a commuting 2-tuple of invertible operators on a Hilbert space $H$  and $c,d>0$ and  $T$ lifts to a 2-tuple $J\in \mathfrak{J}_{c,d}$ acting on the Hilbert space $K$,  i.e. there is an isometry $V:H\to K$ such that \[VT^\alpha =J^\alpha V\]
for every multi-index $\alpha$. If $\lambda \in \sigma_{ap}(T),$ then $\lambda\in \sigma_{ap}(J);$  i.e., $\sigma_{ap}(T) \subset \sigma_{ap}(J)$.
\end{lemma}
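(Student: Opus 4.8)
The statement is essentially the observation that an intertwining isometry pushes approximate eigenvectors forward. The plan is as follows. Fix $\lambda=(\lambda_1,\lambda_2)\in\sigma_{ap}(T)$ and choose, by definition, a sequence of unit vectors $\{x_k\}\subset H$ with $\|(T_i-\lambda_i)x_k\|\to 0$ for $i=1,2$. Set $y_k=Vx_k\in K$. Since $V$ is an isometry, each $y_k$ is a unit vector, so the $\{y_k\}$ are legitimate test vectors for the approximate point spectrum of $J$.

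Next I would use the intertwining relation. Taking $\alpha=e_i$ (the $i$-th standard multi-index) in $VT^\alpha=J^\alpha V$ gives $VT_i=J_iV$ for $i=1,2$, hence also $V(T_i-\lambda_i)=(J_i-\lambda_i)V$ since $V$ is linear. Therefore
\[
\|(J_i-\lambda_i)y_k\|=\|(J_i-\lambda_i)Vx_k\|=\|V(T_i-\lambda_i)x_k\|=\|(T_i-\lambda_i)x_k\|\longrightarrow 0,
\]
where the last equality again uses that $V$ preserves norms. Since this holds simultaneously for $i=1,2$ along the single sequence $\{y_k\}$, the point $\lambda$ lies in $\sigma_{ap}(J)$. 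As $\lambda$ was arbitrary, $\sigma_{ap}(T)\subset\sigma_{ap}(J)$.

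\textbf{Main obstacle.} There is essentially no obstacle here: the argument uses only that $V$ is an isometry intertwining $T_i$ with $J_i$, and the invertibility of $T$ and the specific form $J\in\mathfrak{J}_{c,d}$ play no role in this particular lemma (they matter for the surrounding results, e.g.\ for the reverse inclusion and for relating $\sigma_{ap}$ to $\sigma_{\tay}$ via Theorem~\ref{thm:Wrobel} and Lemma~\ref{lem:ap in T2}). The only point to be a little careful about is extracting $VT_i=J_iV$ from the stated family of relations $VT^\alpha=J^\alpha V$, which is immediate by specializing $\alpha$ to the coordinate multi-indices.
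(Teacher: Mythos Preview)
Your proposal is correct and follows essentially the same approach as the paper: one intertwines $V$ with $T_i-\lambda_i$ and $J_i-\lambda_i$, and uses that $V$ is an isometry to transport the unit approximate eigenvectors $x_k$ to unit vectors $y_k=Vx_k$ witnessing $\lambda\in\sigma_{ap}(J)$. If anything, your write-up is slightly more explicit than the paper's in noting that $VT_i=J_iV$ comes from specializing $\alpha$ and that the extra hypotheses on $T$ and $J$ are not used here.
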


\begin{proof}For $i=1,2,$ 
\[V(T_i-\lambda_i)=(J_i-\lambda_i)V.\]
If $\|(T_i-\lambda_i)x_k\|\rightarrow 0$ as $k\rightarrow \infty$, then $\|V(T_i-\lambda_i)x_k\|\rightarrow 0$ as $k\rightarrow \infty$ since $V$ is an isometry. Hence for the unit vectors $y_k=Vx_k$,
\[\|(J_i-\lambda_i)Vx_k\|\rightarrow 0\]
as $k\rightarrow \infty$.
\end{proof}

 We are now in position to show $\sigma_{\tay}(T)\subseteq J$. Since  $T_i$ and $J_i$ are invertible for $i=1,2$,  both $\sigma_{\tay}(T)$ and $\sigma_{\tay}(J)$ are subsets of $\mathbb{T}^2$, since for instance $\sigma_{\tay}(T)\subseteq \sigma(T_1)\times \sigma(T_2)\subseteq \mathbb{T}^2$.  In particular, by Theorem \ref{thm:Wrobel} and Lemma \ref{lem:ap in T2}, 
\[
 \sigma_{\tay}(A) = \ext(\co(\sigma_{\tay}(A)))=\ext(\co(\sigma_{ap}(A))) = \sigma_{ap}(A),
\]
where $A$ is either $T$ or $J$.  An application of  Lemma \ref{lem:ap T vs J} now gives $\sigma_{\tay}(T) \subset \sigma_{\tay}(J),$ completing the proof of the first part of Proposition \ref{prop:spectral inclusion}.

We will now complete the proof of Proposition \ref{prop:spectral inclusion} by  showing  that we can alter the 2-tuple $J$ so that $\sigma_{\tay}(J)\subseteq \sigma_{\tay}(T)$. We will state this as a proposition whose proof will require several lemmas and occupy the remainder of this section.  Suppose $T=(T_1,T_2)$ is a commuting tuple of invertible operators which lift to a commuting tuple of invertible operators $J=(J_1,J_2)\in \mathfrak{J}_{c,d}$ of the form \eqref{eq:Jforms} i.e. there exists an isometry $V$ such that 
\[VT_1T_2=J_1J_2 V.\]
Let $U=(U_1, U_2)$ be the tuple of unitary operators appearing in $J$. As in \cite{McCullough} we will show that each $U_i$ can be replaced with $W_i=(I-P)U_i(I-P)$, where $P$ is the  joint spectral projection for the complement of $\sigma_{\tay}(T)$.

\begin{prop}\label{equalspec} If a commuting tuple of invertible operators $T$ lifts to a commuting tuple of operators $J\in \mathfrak{J}_{c,d}$, then there exists a tuple of commuting invertible operators $\mathscr{J}=(\mathscr{J}_1,\mathscr{J}_2)\in \mathfrak{J}_{c,d}$ such that $T$ lifts to $\mathscr J$ and $\sigma_{\tay}(T)=\sigma_{\tay}(\mathscr{J})$.  
\end{prop}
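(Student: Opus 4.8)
The plan is to keep the lift $J=(J_1,J_2)\in\mathfrak{J}_{c,d}$ fixed and carry out the announced strategy of replacing each $U_i$ by its compression $(I-P)U_i(I-P)$. Write $J_i$ in tensor form as $J_i=R_i\otimes U_i$ on $\mathbb{C}^3\otimes K$, where $R_1=I_3+cE_{12}$ and $R_2=I_3+dE_{13}$ are commuting unipotent matrices, $U=(U_1,U_2)$ is the commuting pair of unitaries appearing in $J$, $K$ is the Hilbert space they act on, and $E$ is the joint spectral measure of $U$; the isometry $V\colon H\to\mathbb{C}^3\otimes K$ of the lift satisfies $J^{\alpha}V=VT^{\alpha}$ for all $\alpha\in\mathbb{Z}^2$ (using that $T$, hence $J$, is invertible). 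By the first part of Proposition \ref{prop:spectral inclusion} and Proposition \ref{specUequalspecJ}, $\sigma_{\tay}(T)\subseteq\sigma_{\tay}(J)=\sigma_{\tay}(U)=\operatorname{supp}(E)$; set $P:=E(\mathbb{C}^2\setminus\sigma_{\tay}(T))$, so $I-P=E(\sigma_{\tay}(T))$, put $W_i:=U_i|_{\ran(I-P)}$ and $\mathscr{J}_i:=R_i\otimes W_i$. Then $\mathscr{J}=(\mathscr{J}_1,\mathscr{J}_2)\in\mathfrak{J}_{c,d}$, and since $I_3\otimes(I-P)$ reduces each $J_i$ (it commutes with $R_i\otimes I$ and with $I_3\otimes U_i$), $\mathscr{J}_i$ is exactly the restriction of $J_i$ to $\ran\bigl(I_3\otimes(I-P)\bigr)$. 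Thus the whole proposition reduces to the single claim $(I_3\otimes P)V=0$: granting it, $V$ maps $H$ isometrically into $\ran\bigl(I_3\otimes(I-P)\bigr)$ and $\mathscr{J}^{\alpha}V=J^{\alpha}V=VT^{\alpha}$, so $T$ lifts to $\mathscr{J}$.

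To prove $(I_3\otimes P)V=0$, decompose $V$ into columns $V_j:=(E_{jj}\otimes I)V\in\sB(H,K)$, so that $Vh=\sum_{j=1}^3 e_j\otimes V_jh$ and $(I_3\otimes P)V=\sum_j e_j\otimes PV_j$. Expanding $J^{\alpha}V=VT^{\alpha}$ with $J^{\alpha}=(I_3+\alpha_1cE_{12}+\alpha_2dE_{13})\otimes U^{\alpha}$ yields, for every $\alpha\in\mathbb{Z}^2$,
\[U^{\alpha}V_2=V_2T^{\alpha},\qquad U^{\alpha}V_3=V_3T^{\alpha},\qquad U^{\alpha}V_1=(V_1-\alpha_1cV_2-\alpha_2dV_3)T^{\alpha}.\]
So it suffices to show $PV_j=0$, i.e.\ $\ran V_j\subseteq\ran E(\sigma_{\tay}(T))$, for $j=1,2,3$. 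Because $\alpha$ ranges over all of $\mathbb{Z}^2$ and $U_i^{-1}=U_i^{*}$, the relation $U^{\alpha}V_2=V_2T^{\alpha}$ shows $\ran V_2$ is invariant under every $U^{\alpha}$, so $\overline{\ran V_2}$ reduces $U$ and $U|_{\overline{\ran V_2}}$ is a commuting unitary pair whose spectral measure is $E$ restricted to $\overline{\ran V_2}$. The crucial point is then
\[\sigma_{\tay}\bigl(U|_{\overline{\ran V_2}}\bigr)\subseteq\sigma_{\tay}(T),\]
which gives $\overline{\ran V_2}\subseteq\ran E(\sigma_{\tay}(T))$; the case $j=3$ is identical. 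For $j=1$, once $PV_2=PV_3=0$ are in hand, applying $P$ (which commutes with each $U^{\alpha}$) to the third relation gives $U^{\alpha}(PV_1)=(PV_1)T^{\alpha}$, so $PV_1$ intertwines $T$ with $U$ and has range inside $\ran P$; the same spectral inclusion forces $\overline{\ran(PV_1)}\subseteq\ran E(\sigma_{\tay}(T))=\ran(I-P)$, and since $\ran P\cap\ran(I-P)=\{0\}$ we conclude $PV_1=0$.

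The main obstacle is the crucial inclusion — equivalently, the statement that whenever $X\colon H\to K$ intertwines $T$ (a commuting pair of invertible $3$-isometries, so $\sigma_{\tay}(T)\subseteq\mathbb{T}^2$) with the commuting unitary pair $U$, the spectral measure of $U|_{\overline{\ran X}}$ is carried by $\sigma_{\tay}(T)$. This is what the remaining lemmas of the subsection are for. First one passes from Taylor to approximate point spectra exactly as in the computation preceding this proposition: for a commuting pair $A$ of invertible operators, $\sigma_{\tay}(A)$ and $\sigma_{ap}(A)$ lie in $\mathbb{T}^2$, so Lemma \ref{lem:ext of T}, Lemma \ref{lem:ap in T2} and Theorem \ref{thm:Wrobel} give $\sigma_{\tay}(A)=\ext(\co(\sigma_{\tay}(A)))=\ext(\co(\sigma_{ap}(A)))=\sigma_{ap}(A)$; applied with $A=U|_{\overline{\ran X}}$ this identifies $\sigma_{\tay}(U|_{\overline{\ran X}})$ with the support of its spectral measure and reduces the claim to $\sigma_{ap}(U|_{\overline{\ran X}})\subseteq\sigma_{ap}(T)$. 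For this one uses that $U$ is unitary, so $\|XT^{\alpha}h\|=\|U^{\alpha}Xh\|=\|Xh\|$ for all $\alpha$ — i.e.\ $X$ does not contract along $T$-orbits — together with a Rosenblum / Putnam--Fuglede type argument (invoking that $X$ has dense range onto $\overline{\ran X}$, that $\overline{\ran X}$ reduces $U$, and the invertibility of $T_i-\lambda_i$ for $\lambda\notin\sigma_{ap}(T)$) to transport an approximate eigenvector of $U|_{\overline{\ran X}}$ back to one of $T$. I expect pushing the approximate-eigenvector estimate through the (not necessarily bounded below) intertwiner $X$ to be the delicate step, and the reason these preparatory lemmas are needed.

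Finally, once $(I_3\otimes P)V=0$ is established, $T$ lifts to $\mathscr{J}\in\mathfrak{J}_{c,d}$, whence $\sigma_{\tay}(T)\subseteq\sigma_{\tay}(\mathscr{J})$ by the first part of Proposition \ref{prop:spectral inclusion}. Conversely, Proposition \ref{specUequalspecJ} gives $\sigma_{\tay}(\mathscr{J})=\sigma_{\tay}(W_1,W_2)$, and $(W_1,W_2)$ is the restriction of the commuting normal pair $(U_1,U_2)$ to the spectral subspace of the closed set $\sigma_{\tay}(T)$, so its joint spectrum equals $\operatorname{supp}\bigl(E(\,\cdot\,\cap\sigma_{\tay}(T))\bigr)\subseteq\sigma_{\tay}(T)$. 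Hence $\sigma_{\tay}(\mathscr{J})=\sigma_{\tay}(T)$, which together with the first part completes the proof of Proposition \ref{prop:spectral inclusion}.
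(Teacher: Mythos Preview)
Your architecture matches the paper's: set $P=E(\sigma_{\tay}(T)^c)$, prove $PV_\ell=0$ for each block of $V$, then compress $U$ to the range of $I-P$ to obtain $\mathscr{J}$. Your tensor/intertwining bookkeeping---extracting $U^{\alpha}V_2=V_2T^{\alpha}$, $U^{\alpha}V_3=V_3T^{\alpha}$, and then reducing $V_1$ to the same situation after applying $P$---is a clean repackaging of what the paper does via block computations with $f^n(J)^*(0\oplus 0\oplus P)f^n(J)$, etc.

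The gap is in your ``crucial inclusion'' paragraph. First, the assertion ``invertibility of $T_i-\lambda_i$ for $\lambda\notin\sigma_{ap}(T)$'' is false for tuples: $\lambda\notin\sigma_{ap}(T)$ only gives $\sum_i\|(T_i-\lambda_i)h\|^2\geq c\|h\|^2$, not that any individual $T_i-\lambda_i$ is invertible. Second, transporting an approximate eigenvector of $U|_{\overline{\ran X}}$ back through $X$ fails for precisely the reason you flag: $X$ is not bounded below, so from $\|X(T_i-\lambda_i)h_k\|\to 0$ and $\|Xh_k\|\to 1$ you cannot conclude $\|(T_i-\lambda_i)h_k\|/\|h_k\|\to 0$. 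The paper does not attempt this.

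What the preparatory lemmas actually do is a peak-function argument. Lemma~\ref{f_p} builds, for each $p\in\mathbb{T}^2\setminus\sigma_{\tay}(T)$, a product function $f_p(z_1,z_2)=f_{p_1}(z_1)f_{p_2}(z_2)$ holomorphic on a polydisc neighborhood with $|f_p|\geq 1$ on a neighborhood $O_p\ni p$ disjoint from $\sigma_{\tay}(T)$ and $|f_p|<1$ on $\sigma_{\tay}(T)$. Then $f_p^n(T)\to 0$ in norm by the holomorphic functional calculus, so $f_p^n(J)V\to 0$; picking off diagonal blocks (Lemma~\ref{opencenteredatp}) gives $E(O_p)f_p^n(U)V_\ell\to 0$, and since $f_p^n(U)^*E(O_p)f_p^n(U)\succeq E(O_p)$ one concludes $E(O_p)V_\ell=0$. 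A compactness cover and a $\sigma$-limit (Lemmas~\ref{closedset}, \ref{Borelsinc}) then yield $PV_\ell=0$. This mechanism slots directly into your reformulation---once you have $U_iV_j=V_jT_i$ for $j=2,3$, the same peak-function estimate gives $E(O_p)V_j=0$ without touching the block structure of $J$---so your reduction is good; only the engine at the hard step needs to be replaced.
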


Since the inclusion $\sigma_{\tay}(T)\subset \sigma_{\tay}(\mathscr J)$ has already been established, it remains to prove that $\mathscr J$ can be chosen in such a way that the reverse inclusion holds. 

Assuming $T_1$ and $T_2$ are both invertible,  by Theorem \ref{liftthm} there is a commuting 2-tuple of unitary operators $U_1$ and $U_2$ acting on a Hilbert space $F$ and an isometry $V:H\rightarrow F\oplus F\oplus F$ such that 
\[VT_1^nT_2^m=J_1^nJ_2^mV\] 
for all $m,n\in \mathbb{N}$ where the $J_i$ have $U_i$ as entries for $i=1,2$.
If $\sigma_{\tay}(T)=\mathbb{T}^2,$ then there is not much to prove since $\sigma_{\tay}(J)\subseteq \sigma_{\tay}(U)\subseteq \mathbb{T}^2$ and the proof is complete. So from this point onward we assume otherwise. 

As shown in \cite{McCullough} given an arc $A$ in the complement of the spectrum of a $3$-isometry T ($\sigma(T)\subseteq \mathbb{T}$), there is a holomorphic function $f$ such that $|f|\geq 1$ on the arc $A$ and $|f|<1$ on and inside $\Gamma$, where $\Gamma$ is a curve containing the spectrum. 

Let $\overline{\mathbb D}$ denote the closed unit disk, $\{z\in\mathbb C: |z|\le 1\}$, in the complex plane $\mathbb C$. 

\begin{lemma}\label{f_p} Let $p=(e^{i\theta_1}, e^{i\theta_2})$ be a point of $\mathbb{T}^2$ in the complement of the Taylor spectrum of $T$. If $\Omega_i,$ for $i=1,2$, are open sets  containing $\overline{\mathbb D}$ and  $2e^{i\theta_i}\notin \Omega_i,$ then there exists an open set $O_p\subset \mathbb{T}^2$ (open in the topology of $\mathbb{T}^2$) such that $O_p\cap \sigma_{\tay}(T)=\emptyset$ and a holomorphic function $f_p:\Omega_1\times \Omega_2 \rightarrow \mathbb{C}$ such that $|f_p|\geq 1$ on $O_p$ and $|f_p|<1$ on $\sigma_{\tay}(T)$. Moreover there exist holomorphic functions $f_{p_i}:\Omega_i\rightarrow \mathbb{C}$ such that 
\[f_{p}(z_1,z_2)=f_{p_1}(z_1)\cdot f_{p_2}(z_2).\]
\end{lemma}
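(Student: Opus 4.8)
The plan is to produce $f_p$ as an explicit product $f_{p_1}(z_1)\,f_{p_2}(z_2)$ of two rational functions, each holomorphic on the relevant $\Omega_i$ precisely because its only pole is parked at the forbidden point $2e^{i\theta_i}$.

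First I would isolate the one-variable building block $g_i(z_i)=1/(z_i-2e^{i\theta_i})$. Since $2e^{i\theta_i}\notin\Omega_i$, this is holomorphic on $\Omega_i$, and a one-line computation, namely $|e^{i\phi}-2e^{i\theta_i}|^2=5-4\cos(\phi-\theta_i)\ge 1$ with equality if and only if $\phi=\theta_i$, shows that $|g_i|\le 1$ on $\mathbb T$ with equality only at $z_i=e^{i\theta_i}$. Hence the function $g$ defined by $g(z_1,z_2)=g_1(z_1)g_2(z_2)$ is holomorphic on $\Omega_1\times\Omega_2$, satisfies $|g|\le 1$ on $\mathbb T^2$, and has $|g|=1$ on $\mathbb T^2$ only at the single point $p=(e^{i\theta_1},e^{i\theta_2})$.

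Next I would invoke compactness. Because $T_1$ and $T_2$ are invertible we have $\sigma_{\tay}(T)\subseteq\sigma(T_1)\times\sigma(T_2)\subseteq\mathbb T^2$, so $\sigma_{\tay}(T)$ is a compact subset of $\mathbb T^2$ not containing $p$; therefore $M:=\max_{\sigma_{\tay}(T)}|g|<1$. Fix $\varepsilon>0$ with $(1+\varepsilon)M<1$ and set $f_{p_1}:=(1+\varepsilon)g_1$, $f_{p_2}:=g_2$, and $f_p:=f_{p_1}f_{p_2}=(1+\varepsilon)g$. Then each $f_{p_i}\colon\Omega_i\to\mathbb C$ is holomorphic, $f_p(z_1,z_2)=f_{p_1}(z_1)f_{p_2}(z_2)$, and $|f_p|\le(1+\varepsilon)M<1$ on $\sigma_{\tay}(T)$; meanwhile $|f_p(p)|=1+\varepsilon$, so the open set $O_p:=\{z\in\mathbb T^2:|f_p(z)|>1\}$ contains $p$, carries $|f_p|\ge 1$, and is automatically disjoint from $\sigma_{\tay}(T)$ since $|f_p|<1$ there.

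I do not expect a genuine obstacle; the only point needing care is the choice of separating factor. Using $2e^{i\theta_i}$ is what makes $|g_i|$ attain its circle-maximum exactly at $e^{i\theta_i}$, which in turn pins the modulus of the product to its maximum on $\mathbb T^2$ only at $p$; compactness of $\sigma_{\tay}(T)$ then furnishes the uniform gap that a small dilation by $1+\varepsilon$ converts into $|f_p|\ge 1$ on a neighborhood of $p$ while keeping $|f_p|<1$ on $\sigma_{\tay}(T)$. The remaining verifications, that the poles miss $\Omega_1\times\Omega_2$ and that multiplying a single factor by a positive scalar preserves the product form, are immediate.
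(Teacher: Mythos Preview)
Your proof is correct and follows essentially the same approach as the paper. The paper's building block $h_i(z)=1/(2-e^{-i\theta_i}z)$ differs from your $g_i(z)=1/(z-2e^{i\theta_i})$ only by the unimodular factor $-e^{i\theta_i}$, so the modulus analysis is identical; the paper then takes a high power $h^N$ before scaling by a constant $C\in(1,2)$, whereas you observe that a single scaling by $1+\varepsilon$ already suffices once $M=\max_{\sigma_{\tay}(T)}|g|<1$ is in hand, which is a minor streamlining of the same argument.
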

\begin{proof} Given $p=(e^{i\theta_1}, e^{i\theta_2})\in \mathbb{T}^2$ consider the functions 
\[h_i:\Omega_i\rightarrow \mathbb{C}, \ \ h_i(z)=\frac{1}{(2-e^{-i\theta_i}z)} \text{ for }i=1,2\]
 and define $h:\Omega_1\times \Omega_2\rightarrow \mathbb{C}$ by
\[h(z_1,z_2)=h_1(z_1)\cdot h_2(z_2)=\frac{1}{(2-e^{-i\theta_1}z_1)(2-e^{-i\theta_2}z_2)}.\]
We note that $h(p)=1$ and $|h(z)|<1$ whenever $z\neq p$ and $z$ in the bidisk. Let $K$ be a compact subset of $\mathbb{T}^2$ not containing $p$ and note $|h^n|\rightarrow 0$ uniformly on $K$ as $n\rightarrow \infty.$ Hence, $|h^N(z)|<\frac{1}{2}$  for some $N$ large enough and all $z\in \sigma_{\tay}(T)$. Let $C$ be a positive number such that $1<C<2$ and let $O_p$ be an open set disjoint from the Taylor spectrum containing $p$ such that $C|h^N|\geq 1$ on $O_p$. Such an open set exists by continuity. Now define $f_p(z)=Ch^N(z)$ and note $f_p$ and $O_p$ satisfy the conditions of the lemma. It is clear there exists a $f_{p_i}$ for $i=1,2$ such that $f_p(z_1,z_2)=f_{p_1}(z_1)\cdot f_{p_2}(z_2)$. \end{proof}

We now choose $\Omega_1=\Omega_2=\frac{3}{2}\mathbb{D}$. Since each $U_i$ is unitary we can define $f_i(U_i)$ through the  holomorphic functional calculus or by the power series functional calculus. Of course both will give the same operator value for $f_i(U_i)$. At the same time we may define each $f_{p_i}(J_i)$ via the power series calculus.  It is straight forward to verify
\[f_{p_1}(J_1)=\begin{pmatrix}f_{p_1}(U_1)& cf_{p_1}^\prime(U_1)& 0 \\ 0 & f_{p_1}(U_1)& 0 \\ 0 & 0 & f_{p_1}(U_1)\end{pmatrix},\ \ \ \ \ 
f_{p_2}(J_2)=\begin{pmatrix}f_{p_2}(U_2)& 0& df_{p_2}^\prime(U_2) \\ 0 & f_{p_2}(U_2)& 0 \\ 0 & 0 & f_{p_2}(U_2)\end{pmatrix}.\]
Define $f_p(J)$ by
\[f_p(J)=f_{p_1}(J_1)\cdot f_{p_2}(J_2).\]
Similarly we may define $f_{p_i}(T_i)$ and hence $f(T)$ by the power series functional calculus as well. We note that any other functional calculus used to define $f(J)$ and $f(T)$ must agree with the values given by the power series calculus. 

Now write with respect to the decomposition $F\oplus F\oplus F$ 
\[V=\begin{pmatrix}V_2\\V_1\\V_0\end{pmatrix}.\]
\begin{lemma}\label{opencenteredatp}
 Let $p \in\mathbb T^2$ be in the complement of $\sigma_{\tay}(T)$ with $f_p$ and $O_p\subset \mathbb{T}^2$ as described in Lemma \ref{f_p}, then $E(O_p)V_\ell=0$ for $\ell=0,1,2$.
\end{lemma}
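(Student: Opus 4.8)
The plan is to leverage the intertwining $VT^\alpha=J^\alpha V$ together with the product form $f_p=f_{p_1}\otimes f_{p_2}$. First I would promote the intertwining to the functional calculus: each $f_{p_i}$ is $\tfrac12$ times a geometric series, hence has a power series of radius $2$, and since $\sigma_{\tay}(T)\subseteq\sigma(T_1)\times\sigma(T_2)\subseteq\mathbb T^2$ the power series defining $f_{p_i}(T_i)$ and $f_{p_i}(J_i)$ converge in operator norm; applying $V$ to partial sums and passing to the limit gives $Vf_p(T)=f_p(J)V$, and hence $Vf_p(T)^k=f_p(J)^kV$ for all $k\ge 1$. Setting $A=f_{p_1}(U_1)f_{p_2}(U_2)$, $B=c\,f_{p_1}'(U_1)f_{p_2}(U_2)$, and $C=d\,f_{p_1}(U_1)f_{p_2}'(U_2)$, the displayed formulas for $f_{p_i}(J_i)$ show that $f_p(J)=\operatorname{diag}(A,A,A)+N$ with $N$ having nonzero entries only in the first row and $N^2=0$; since $A,B,C$ are functions of the commuting pair $(U_1,U_2)$ they mutually commute, so
\[ f_p(J)^k=\begin{pmatrix} A^k & kA^{k-1}B & kA^{k-1}C \\ 0 & A^k & 0 \\ 0 & 0 & A^k\end{pmatrix}. \]

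Reading off the components of $f_p(J)^kV=Vf_p(T)^k$ against $V=(V_2,V_1,V_0)^\top$ yields
\[ A^kV_1=V_1f_p(T)^k, \qquad A^kV_0=V_0f_p(T)^k, \]
\[ A^kV_2+kA^{k-1}BV_1+kA^{k-1}CV_0=V_2f_p(T)^k. \]
Next I would show $\|f_p(T)^k\|\to 0$: since $f_p$ is holomorphic on $\tfrac32\mathbb D\times\tfrac32\mathbb D\supseteq\mathbb T^2\supseteq\sigma_{\tay}(T)$, the spectral mapping theorem for the Taylor spectrum (cf. \cite{Curto}) gives $\sigma(f_p(T))=f_p(\sigma_{\tay}(T))$, and as $|f_p|<1$ on the compact set $\sigma_{\tay}(T)$ the spectral radius of $f_p(T)$ is strictly less than $1$, whence $\|f_p(T)^k\|\to 0$ geometrically. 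From the first two relations, $A^kV_\ell\to 0$ in norm for $\ell=0,1$. On the other hand $A=f_{p_1}(U_1)f_{p_2}(U_2)$ is a function of $U$, so it commutes with the spectral projection $E(O_p)$, and since $|f_p|\ge 1$ on $O_p$ one has $\|A^ky\|^2=\int_{O_p}|f_p|^{2k}\,d\langle E(\cdot)y,y\rangle\ge\|y\|^2$ for every $y\in\operatorname{ran}E(O_p)$. Applying $E(O_p)$ to $A^kV_\ell$, letting $k\to\infty$, and using this lower bound forces $E(O_p)V_\ell=0$ for $\ell=0,1$.

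Finally, for $\ell=2$ I would apply $E(O_p)$ to the third relation. Since $B$ and $C$ are also functions of $U$ they commute with $E(O_p)$, and the two middle terms vanish because $E(O_p)V_1=E(O_p)V_0=0$; what remains is $A^kE(O_p)V_2=E(O_p)V_2f_p(T)^k$, whose norm tends to $0$. The same lower bound $\|A^ky\|\ge\|y\|$ on $\operatorname{ran}E(O_p)$ then gives $E(O_p)V_2=0$, completing the proof.

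The step I expect to demand the most care is the first one: verifying that the power-series (equivalently holomorphic) functional calculus is compatible with the intertwining relation and that all the relevant series converge in operator norm on the appropriate spectra, and — closely related — invoking the Taylor spectral mapping theorem to obtain a spectral radius strictly below $1$. Once $\|f_p(T)^k\|\to 0$ is secured, the remainder is just the two-sided estimate on $A^k$ coming from $|f_p|<1$ on $\sigma_{\tay}(T)$ and $|f_p|\ge 1$ on $O_p$, together with bookkeeping along the three summands of $F\oplus F\oplus F$.
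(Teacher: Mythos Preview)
Your proof is correct and follows essentially the same strategy as the paper: intertwine $V$ with $f_p(\cdot)^k$, use $|f_p|<1$ on $\sigma_{\tay}(T)$ to force $f_p(T)^k\to 0$, use $|f_p|\ge 1$ on $O_p$ for the lower bound on the unitary part, and treat $V_0,V_1$ before $V_2$. The only organizational differences are that you compute $f_p(J)^k$ explicitly via the nilpotent decomposition and read off block rows, whereas the paper sandwiches $f^n(J)V$ with the projections $0\oplus 0\oplus P$, $0\oplus P\oplus 0$, $P\oplus 0\oplus 0$ to isolate the same pieces; and you invoke the Taylor spectral mapping theorem to get spectral radius $<1$, whereas the paper phrases this as uniform convergence of $f^n$ to $0$ on $\sigma_{\tay}(T)$. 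These are equivalent bookkeeping choices, and your explicit formula for $f_p(J)^k$ makes the passage to $V_2$ particularly transparent.
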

\begin{proof} We will surpress the $p$ in the notation for the functions $f_p$, $f_{p_1},$ and $f_{p_2}$, writing $f,f_1,f_2$ instead. By the holomorphic functional calculus we know $f_i^n(T_i)$ converges to zero in the operator norm since each $f_i^n$ converges to 0 uniformly on the Taylor spectrum for $T$. Since 
\[Vf_i^n(T_i)=f_i^n(J_i)V\ \ \ \ \text{ for }i=1,2,\]
$f_i^n(J_i)V$ also tends to 0 in operator norm. Hence $f^n(J)V$ also tends to 0 in the operator norm.  Let $E$ be the unique joint spectral measure for the 2-tuple $U$ such that 
\[E(A\times B)=E_1(A)E_2(B)\]
where $E_i$ is the spectral measure for $U_i$, $i=1,2$. Let $P$ be the spectral projection for $U$ corresponding to $O_p$, 
\[P=\int_{O_p} dE=E(O_p).\]

 Consider, with respect to the decomposition $K=F\oplus F\oplus F$ 
\[0\oplus 0 \oplus P=\begin{pmatrix}0 & 0 & 0 \\ 0 & 0 & 0 \\ 0 & 0 & P\end{pmatrix},\]
\[0\oplus P \oplus 0=\begin{pmatrix}0& 0 & 0 \\ 0 & P & 0 \\ 0 & 0 & 0\end{pmatrix},\]
and
\[P\oplus 0 \oplus 0=\begin{pmatrix}P& 0 & 0 \\ 0 & 0 & 0 \\ 0 & 0 & 0\end{pmatrix}.\]

Since $f^n(J)V$ converges to zero so do 
\[V^*f^{n}(J)^*(0\oplus 0 \oplus P)(0\oplus 0 \oplus P)f^n(J)V,\]
\[V^*f^{n}(J)^*(0\oplus P\oplus 0)(0\oplus P\oplus 0)f^n(J)V,\]
and 
\[V^*f^{n}(J)^*(P\oplus 0\oplus 0)(P\oplus 0\oplus 0)f^n(J)V.\]
By calculation 
\begin{align*}&f^{n}(J)^*(0\oplus 0 \oplus P)(0\oplus 0 \oplus P)f^n(J)\\&=\begin{pmatrix}f^n(U)^* & 0 & 0 \\ * & f^n(U)^*  & 0 \\ * & 0 & f^n(U)^* \end{pmatrix}\begin{pmatrix}0 & 0 & 0 \\ 0 & 0 & 0 \\ 0 & 0 & P\end{pmatrix}\begin{pmatrix}f^n(U) & * & * \\ 0 & f^n(U) & 0 \\ 0 & 0 & f^n(U)\end{pmatrix}\\
&=\begin{pmatrix}0 & 0 & 0 \\ 0 & 0 & 0 \\ 0 & 0 & f^n(U)^*Pf^n(U)\end{pmatrix}. 
\end{align*}
It follows that $Pf^n(U)V_0$ tends to $0$ in operator norm. However, $Pf^n(U)f^n(U)P=f^{*n}(U)Pf^n(U),$  since $P$ is the spectral projection associated with $U$. Consequently,  
\[V_0^*P|f^n|^2PV_0=V_0^*f^n(U)Pf^n(U)V_0\stackrel{\|\cdot\|}{\longrightarrow} 0.\]
But $P|f^n|^2P\geq P$ since $|f^n|\geq 1$ on the support $O_p$ of $P$. Thus  $PV_0=0$.  Similarly,
\[V^*f^{n}(J)^*(0\oplus P\oplus 0)(0\oplus P\oplus 0)f^n(J)V\stackrel{\|\cdot\|}{\longrightarrow} 0\]
and
\[V_1^*P|f^n|^2PV_1=V_1^*f^n(U)Pf^n(U)V_1\stackrel{\|\cdot\|}{\longrightarrow} 0.\]
Hence by similar argument $PV_1=0$.  Lastly since 
\[V^*f^{n}(J)^*(P\oplus 0\oplus 0)(P\oplus 0\oplus 0)f^n(J)V\stackrel{\|\cdot\|}{\longrightarrow} 0,\] 
by using the fact that $PV_1=PV_0=0$ and arguing similarly to the previous cases we have that $PV_2=0$.
\end{proof}

\begin{lemma}\label{closedset} If $A$ is a compact subset of $\mathbb T$ such that $A\cap \sigma_{\tay}(T)=\emptyset,$ then $E(A)V_\ell=0$ for $\ell=0,1,2$.
\end{lemma}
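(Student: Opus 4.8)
The plan is to reduce the assertion for the compact set $A$ to the pointwise result of Lemma \ref{opencenteredatp} by a standard compactness argument. First I would note that every point $p=(e^{i\theta_1},e^{i\theta_2})\in A$ lies in the complement of $\sigma_{\tay}(T)$ and, with the running choice $\Omega_1=\Omega_2=\tfrac32\mathbb D$, satisfies the hypothesis $2e^{i\theta_i}\notin\Omega_i$ of Lemma \ref{f_p}, since $|2e^{i\theta_i}|=2>\tfrac32$. Thus Lemma \ref{f_p} furnishes, for each such $p$, an open set $O_p\subset\mathbb T^2$ with $p\in O_p$ and $O_p\cap\sigma_{\tay}(T)=\emptyset$, and then Lemma \ref{opencenteredatp} gives $E(O_p)V_\ell=0$ for $\ell=0,1,2$.

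Next, since $A$ is compact and $\{O_p:p\in A\}$ is an open cover of $A$, I would pass to a finite subcover $O_{p_1},\dots,O_{p_n}$ and set $O=\bigcup_{i=1}^n O_{p_i}$, so that $A\subseteq O$. To show $E(O)V_\ell=0$, I would fix $y\in H$ and consider the finite positive Borel measure $\mu$ on $\mathbb T^2$ defined by $\mu(B)=\langle E(B)V_\ell y,V_\ell y\rangle=\|E(B)V_\ell y\|^2$ (using that each $E(B)$ is a projection). Subadditivity then gives $\mu(O)\le\sum_{i=1}^n\mu(O_{p_i})=\sum_{i=1}^n\|E(O_{p_i})V_\ell y\|^2=0$, so $E(O)V_\ell y=0$; as $y$ is arbitrary, $E(O)V_\ell=0$.

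Finally, $A\subseteq O$ gives $E(A)\le E(O)$, whence $0\le\|E(A)V_\ell y\|^2=\langle E(A)V_\ell y,V_\ell y\rangle\le\langle E(O)V_\ell y,V_\ell y\rangle=0$ for every $y\in H$, and therefore $E(A)V_\ell=0$ for $\ell=0,1,2$, as claimed. I do not expect a genuine obstacle here; the only step that will require a word of care is the passage from the finitely many neighborhoods to their union, which is precisely the (finite) subadditivity of the scalar spectral measures $B\mapsto\langle E(B)V_\ell y,V_\ell y\rangle$, while the remainder is Heine--Borel together with the monotonicity of spectral projections in the underlying set.
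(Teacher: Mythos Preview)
Your proof is correct and follows essentially the same approach as the paper: cover $A$ by finitely many of the sets $O_{p_i}$ from Lemma~\ref{opencenteredatp}, then use subadditivity and monotonicity of the spectral measure to conclude $E(A)V_\ell=0$. In fact your treatment of the subadditivity step via the scalar measures $B\mapsto\langle E(B)V_\ell y,V_\ell y\rangle$ is more carefully stated than the paper's somewhat informal chain of operator inequalities.
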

\begin{proof}Since $A$ is covered by finitely many $O_{p_i}$, indexed by a finite set $F$ we have 
\[E(A)V_\ell\preceq E\left(\bigcup_{p_i\in F} O_{p_i}\right)V_\ell\preceq \sum_{p_i\in F} E(O_{p_i})V_\ell\]
hence $E(A)V_\ell=0$ for $\ell=1,2$.
\end{proof}

Since the proof of the following lemma carries over from  \cite{McCullough} with only superficial modifications,  we simply state the result here. 
\begin{lemma}\label{Borelsinc}
Suppose $A_1\subseteq A_2\subseteq \ldots $ is an increasing sequence of Borel subsets of $\mathbb{T}^2$ and let $A=\cup_jA_j$. If $E(A_j)V_\ell=0$ for all $j$ and $\ell=0,1,2$, then $E(A)V_\ell=0$. 
\end{lemma}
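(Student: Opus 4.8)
The plan is to deduce the lemma from the fact that a projection-valued spectral measure is continuous from below in the strong operator topology. First I would fix an arbitrary vector $h$ in the domain of $V_\ell$ and set $x=V_\ell h\in K$. Associated to $x$ is the finite positive Borel measure $\mu_x$ on $\mathbb{T}^2$ given by $\mu_x(B)=\ip{E(B)x}{x}=\|E(B)x\|^2$, the second equality holding because $E(B)$ is an orthogonal projection. Since $\mu_x$ is countably additive and $A_1\subseteq A_2\subseteq\cdots$ with $A=\bigcup_j A_j$, we have $\mu_x(A_j)\to\mu_x(A)$.

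Next, because $A_j\subseteq A$ and $E$ is a spectral measure, $E(A)=E(A_j)+E(A\setminus A_j)$ with $E(A_j)E(A\setminus A_j)=0$, so $E(A)-E(A_j)=E(A\setminus A_j)$ is again an orthogonal projection. Hence
\[
\|(E(A)-E(A_j))x\|^2=\|E(A\setminus A_j)x\|^2=\mu_x(A\setminus A_j)=\mu_x(A)-\mu_x(A_j)\longrightarrow 0 .
\]
Thus $E(A_j)x\to E(A)x$ in norm for every $x\in K$; that is, $E(A_j)\to E(A)$ in the strong operator topology.

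Finally, I would apply this with $x=V_\ell h$ and invoke the hypothesis $E(A_j)V_\ell=0$ to get $E(A)V_\ell h=\lim_j E(A_j)V_\ell h=0$ for every $h$, whence $E(A)V_\ell=0$ for $\ell=0,1,2$. There is no genuine obstacle here: the whole content of the lemma is the standard continuity from below of a projection-valued measure, packaged through the scalar measures $\mu_x$, together with the elementary observation that $E(A)-E(A_j)$ is itself a projection so that strong convergence is visible at the level of norms.
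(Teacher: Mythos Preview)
Your argument is correct: it is the standard continuity-from-below of a projection-valued measure in the strong operator topology, applied to the vectors $x=V_\ell h$. The paper does not actually give a proof of this lemma but simply refers to \cite{McCullough}, so there is nothing substantive to compare; your approach is exactly the expected one (the only cosmetic point is that $V_\ell h$ lies in $F$, the space on which $U$ and $E$ act, rather than in $K=F\oplus F\oplus F$).
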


The complement of $\sigma_{\tay}(T)$ can be written as an increasing sequence of closed (compact) sets. By an application of Lemmas \ref{Borelsinc} and \ref{closedset}
\[E(\sigma_{\tay}(T)^c)V_\ell=0, \ \ \ \ \text{for }\ell=0,1,2.\] 

Let $P=E(\sigma_{\tay}(T)^c).$ Each $W_i=(I-P)U_i(I-P)$ is  unitary and
\[\mathscr{J}_1=\begin{pmatrix}W_1& cW_1& 0 \\ 0 & W_1& 0 \\ 0 & 0 & W_1\end{pmatrix},\ \ \ \ \ 
\mathscr{J}_2=\begin{pmatrix}W_2& 0&dW_2 \\ 0 & W_2& 0 \\ 0 & 0 & W_2\end{pmatrix}\]
have the appropriate form.  Finally,  by Proposition \ref{specUequalspecJ}, $\sigma_{\tay}(\mathscr J)=\sigma_{\tay}(W)\subseteq \sigma_{\tay}(T)$. 

\subsection{3-Symmetric Operators Tuples} \label{sec:3sym} 
We will now go more in depth into using the holomorphic functional calculus for $T$ and $J$. For $i=1,2$ let $\Omega_i$ be a simply connected open subset of the plane. While the power series functional calculus was sufficient previously, in the forth coming section we will need to consider logarithms  and a power-series approach is not viable. Given a 2-tuple of commuting operators $T=(T_1,T_2)$ with each $\sigma(T_i)\subseteq \Omega_i$, let $g_i$, for $i=1,2$,  be analytic functions. By use of the holomorphic functional calculus we can define the operators $g_i(T_i)$. By Runge's Theorem there is a sequence of polynomials $(s_{i,n})$ which converge uniformly on compact subsets of $\Omega_i$ to $g_i$ for both $i=1,2$.  The sequences of operators  $s_{i,n}(T_i)$ converge in norm to $g_i(T_i)$ for $i=1,2,$ by the standard properties of the holomorphic functional calculus. Consider a 2-tuple of operators $J=(J_1,J_2)$ of the forms  \eqref{eq:Jforms} with $\sigma(U_i)\subset \Omega_i$ for $i=1,2$, where each $\Omega_i$ is an open simply connected subset of $\mathbb{C}$. 
For the analytic functions $g_i$ defined on $\Omega_i$ for $i=1,2$, with polynomials $(s_{i,n})$ converging uniformly,
\[g_1(J_1)=\lim s_{1,n}(J_1)=\begin{pmatrix}g_1(U_1) & cU_1g_1^\prime( U_1)& 0\\ 0 & g_1(U_1) & 0\\ 0 & 0 & g_1(U_1)\end{pmatrix},\]
\[g_2(J_2)=\lim s_{2,n}(J_2)=\begin{pmatrix}g_2(U_2) & 0&dU_2g_2^\prime(U_2)\\0 & g_2(U_2) & 0\\ 0 & 0 & g_2(U_2)\end{pmatrix}.\]
For a normal operator $T$ the operator $g_i(T)$ is normal as well. Moreover, the spectrum of $g_i(T)$ is given by the spectral mapping theorem as $g_i(\sigma(T))$. Hence, given a tuple $J=(J_1, J_2)$ and holomorphic functions $g_1$ and $g_2$ we have a formula for $g_1(J_1)$ and $g_2(J_2)$ as well as their respective spectra. 

To get some information about the individual spectra, we will use the projection property for the Taylor joint spectrum.  As seen in Curto \cite{Curto}, let  $A$ and $B$ be a $n$-tuple and $k$-tuple respectively i.e. $A=(A_1,\ldots,A_n)$ and $B=(B_1,\ldots,B_k)$. Let $(A,B)$ denote the tuple $(C_1,\ldots, C_{n+k})$ where \[C_{i}=A_i \text { for }i=1,\ldots n\] and \[C_{i}=B_{i-n} \text{ for }i=n+1,\ldots, n+k.\] The projection property for the Taylor joint spectrum is as follows, 
\[\pi_{1,\ldots,n}\sigma_{\tay}(A,B)=\sigma_{\tay}(A)\] 
and 
\[\pi_{n+1,\ldots, n+k}\sigma_{\tay}(A,B)=\sigma_{\tay}(B)\]
where we define $\pi_{1,\ldots,n}:\mathbb{C}^n\times \mathbb{C}^k\rightarrow \mathbb{C}^n$, $(z_1,\ldots, z_n, z_{1+n},\ldots, z_{n+k})\mapsto (z_1,\ldots z_n)$ and similarly for $\pi_{n+1,\ldots, n+k}$. For us this projection property  implies 
\[\pi_{i}\sigma_{\tay}(T_1,T_2)=\sigma_{\tay}(T_i)=\sigma(T_i)\]
for $i=1,2$. In the context of Proposition \ref{equalspec}, if $T=(T_1,T_2)$ lifts to a tuple $J\in\mathfrak{J}_{c,d},$ then there exists a Jordan tuple $\mathscr{J}\in\mathfrak{J}_{c,d}$ such that 
\[\sigma_{\tay}(\mathscr{J})=\sigma_{\tay}(T).\]
Since $\sigma_{\tay}(\mathscr{J}_1,\mathscr{J}_2)=\sigma_{\tay}(T_1,T_2),$ by the projection property, 
\[\sigma(\mathscr{J}_i)=\pi_{i}\sigma_{\tay}(\mathscr{J}_1,\mathscr{J}_2)=\pi_{i}\sigma_{\tay}(T_1,T_2)=\sigma(T_i),\]
for $j=1,2.$ Let $U=(U_1,U_2)$ be the unitary commuting tuple appearing in $\mathscr{J}=(\mathscr{J}_1,\mathscr{J}_2)$. Since it will be of relevance in the exposition to follow we recall for the reader the equality
\[\sigma(U_i)=\sigma(\mathscr{J}_i).\]

\begin{definition}A tuple of operators $\T=(\T_1,\T_2)$ will be called a commuting 3-symmetric tuple if there exist bounded operators $B_{j,k}$ such that, 
\[\exp(is_2\T_2)^*\exp(is_1\T_1)^*\exp(is_1\T_1)\exp(is_2\T_2)=I+\sum_{0<j+k\leq 2} s_1^js_2^k B_{j,k}\]
for all $(s_1,s_2)\in \mathbb{R}^2$.
\end{definition}
\noindent It is clear that if $\T=(\T_1,\T_2)$ is a commuting 3-symmetric tuple, then $T=(e^{i\T_1},e^{i\T_2})$ is a 3-isometric tuple. 

Consider commuting 2-tuples of 3-symmetric operators $(\T_1,\T_2)$ whose spectra lie in $[a_1,b_1]$ and $[a_2,b_2]$ respectively. We note that the Taylor joint spectrum for $(\T_1,\T_2)$ must be contained in $[a_1,b_1]\times[a_2,b_2]$. Let $G(z)=\exp(iz)$ and let $S_i=G([a_i,b_i])$. Suppose the length of each $[a_i,b_i]$ is strictly less than $2\pi.$ In this case $S_i$ is a proper subset of the unit circle $\mathbb{T}$. For each $i$ there exists $\Omega_i \supset [a_i,b_i]$ and $\Omega_{*i}\supset S_i$, open simply connected subsets of $\mathbb{C}$ such that  
\[G_1=G|_{\Omega_1}:\Omega_1\rightarrow \Omega_{*1}\]  
\[G_2=G|_{\Omega_2}:\Omega_2\rightarrow \Omega_{*2}\]
are bi-analytic.
For the operator 2-tuple of commuting 3-symmetric operators $\T=(\T_1,\T_2)$ with $\sigma(\T_i)\subseteq [a_i,b_i]$ the operators $G_i(T_i)$ are defined by the holomorphic functional calculus and $\sigma(G_i(T_i))\subseteq S_i\subset \mathbb{T}$. Let $T_i=G_i(\T_i)$ and suppose the commuting 3-isometric 2-tuple $T=(T_1,T_2)$ lifts, i.e. there exists an isometry $V$ and a Jordan tuple $J$ such that 
\[VT_1^nT_2^m=J_1^nJ_2^mV.\]
 By Proposition \ref{equalspec} and the projection property there exist unitary operators $W_1$ and $W_2$ and an isometry $V$ such that
\[ VT_1=\begin{pmatrix}W_1& cW_1& 0 \\ 0 & W_1& 0 \\ 0 & 0 & W_1\end{pmatrix}V=J_1V\]
\[VT_2=\begin{pmatrix}W_2& 0&dW_2 \\ 0 & W_2& 0 \\ 0 & 0 & W_2\end{pmatrix}V=J_2V\]
where $\sigma(W_i)=\sigma(T_i)$. 
Again each $G_i$ is bi-analytic in the neighborhood of the spectrum of each $J_i$ hence 
\begin{equation}\label{logs}
\begin{split}
&V\T_1=VG_1^{-1}(T_1)=G^{-1}_1(J_1)V\\
&V\T_2=VG_2^{-1}(T_2)=G^{-1}_2(J_2)V.
\end{split}
\end{equation}
Let $A_i=G_i^{-1}(W_i)$ and note $(G_i^{-1})^\prime(W_i)=-iW_i^*$.
Hence, 
\[V\T_1=\begin{pmatrix}A_1& -ic& 0 \\ 0 &A_1& 0 \\ 0 & 0 & A_1\end{pmatrix}V\]
\[V\T_2=\begin{pmatrix}A_2& 0&-id \\ 0 & A_2& 0 \\ 0 & 0 & A_2\end{pmatrix}V.\]

If the spectrum of each $\T_i$ does not have length less than $2\pi$ we can do the same analysis on the operators $\tw{\T}_i=t_i\T_i$ where each $t_i$ is chosen so that $\sigma(\tw{\T}_i)$ is of length less than $2\pi$. As shown in \cite{McCullough} these are also 3-symmetric operators. The Taylor spectrum of the 3-symmetric tuple $\tw{\T}=(\tw{\T}_1,\tw{\T}_2)$ is contained in some $[a_1,b_2]\times [a_2,b_2]$ where each $[a_i,b_i]$ is of length less than $2\pi$. Again $\tw{T}=(\exp(\i\tw{\T}_1),\exp(\i\tw{\T}_2))$ is a 3-isometric tuple and suppose they lift by Theorem \ref{liftthm}, i.e. there exists an isometry $V$ and Jordan tuple $\tw{J}$ such that
\[V\tw{T}_2^n\tw{T}_1^m=\tw{J}_1^m\tw{J}_2^nV\]
and moreover 
\[V\tw{T}_i=\tw{J}_iV.\]
By applying the same argument as in \eqref{logs} we have
\[V\tw{\T}_i=\tw{\J}_iV\]
and thus
\[V\T_i=\frac{1}{t_i}\T_iV.\]

By noting that $\T$ and $T=\exp(i\T)$ share the same operator pencil, we see that the 3-symmetric version of Theorem \ref{liftthm}, stated below for the reader's convenience, holds.
\begin{thm}\label{liftthm-again} Tuples of 3-symmetric operators $(\T_1, \T_2)$ will lift to a 2-tuple $(\J_1, \J_2)$ of the forms
\[\J_1=\begin{pmatrix}A_1& -ic& 0 \\ 0 &A_1& 0 \\ 0 & 0 & A_1\end{pmatrix}\ \ \ \ \ \J_2=\begin{pmatrix}A_2& 0&-id \\ 0 & A_2& 0 \\ 0 & 0 & A_2\end{pmatrix}\]
if and only if the polynomial
 \footnotesize
\[\hat{Q}_{\T}(\alpha,\beta)=I+\a{B}_{1,0}+\b{B}_{0,1}+\a\b {B}_{1,1}+\a^2{B}_{2,0}+\b^2{B}_{0,2}-\frac{1}{c^2}{B}_{2,0}-\frac{1}{d^2}{B}_{0,2} \succeq 0\]
\normalsize
factors in the form, 
\[\hat{Q}_{\T}(\alpha,\beta)=(V_0+\alpha V_1 + \beta V_2)^*(V_0+\alpha V_1 + \beta V_2)\]
for some operators $V_0$, $V_1$ and $V_2$ in $\sB(H)$. 
\end{thm}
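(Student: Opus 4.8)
The plan is to deduce Theorem \ref{liftthm-again} from Theorem \ref{liftthm} by transporting everything through the biholomorphic exponential, using the observation recorded just above the statement that a commuting $3$-symmetric tuple $\T=(\T_1,\T_2)$ and the commuting $3$-isometric tuple $T=(e^{i\T_1},e^{i\T_2})$ have the \emph{same} quadratic pencil. Thus $\hat{Q}_\T=\hat{Q}_T$, so $\hat{Q}_\T$ admits a factorization of the asserted form if and only if $\hat{Q}_T$ does, and $\T$ satisfies the positivity hypothesis if and only if $T\in\mathfrak{F}_{c,d}$; moreover $T_i=e^{i\T_i}$ is automatically invertible, with inverse $e^{-i\T_i}$, and the $T_i$ commute.

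First I would dispose of the normalization: replacing $\T_i$ by $t_i\T_i$ for suitable $t_i>0$, performed simultaneously, we may assume each $\sigma(\T_i)$ lies in an interval $[a_i,b_i]$ of length strictly less than $2\pi$; the scaled operators are again commuting $3$-symmetric, and since $\hat{Q}_{t\T}$ differs from $\hat{Q}_\T$ only by the rescaling $(\alpha,\beta)\mapsto(t_1\alpha,t_2\beta)$ of the independent variables, neither the positivity hypothesis nor the existence of a factorization is affected, and this is compatible with the scaling in the conclusion as in the discussion preceding the theorem. With this reduction, pick simply connected $\Omega_i\supset[a_i,b_i]$ and $\Omega_{*i}\supset S_i:=\exp(i[a_i,b_i])$ on which $G(z)=e^{iz}$ restricts to a bianalytic $G_i\colon\Omega_i\to\Omega_{*i}$, and set $T_i=G_i(\T_i)$, so that $\sigma(T_i)=S_i$ is a proper closed arc of $\mathbb T$ and $T=(T_1,T_2)$ is a commuting $3$-isometric pair of invertibles with $\hat{Q}_T=\hat{Q}_\T$.

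Next I would invoke Theorem \ref{liftthm}: $\hat{Q}_\T$ factors $\iff$ $\hat{Q}_T$ factors $\iff$ $T$ lifts to some $J\in\mathfrak{J}_{c,d}$. Using Proposition \ref{equalspec} we may take $J=(J_1,J_2)$ with $\sigma_{\tay}(J)=\sigma_{\tay}(T)$, whence by the projection property $\sigma(J_i)=\sigma(T_i)=S_i\subset\Omega_{*i}$ and the unitary entries $U_i$ of $J_i$ satisfy $\sigma(U_i)=S_i$. Therefore $G_i^{-1}$, a single-valued analytic branch of $-i\log$ on $\Omega_{*i}$, is defined on a neighbourhood of $\sigma(J_i)$, the holomorphic functional calculus applies, and from the lifting isometry $VT_i=J_iV$ the intertwining passes through the functional calculus exactly as in \eqref{logs}, giving $V\T_i=VG_i^{-1}(T_i)=G_i^{-1}(J_i)V$. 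Combining the block formula for $g(J_i)$ recorded in Section \ref{sec:3sym} (the relevant off-diagonal entry being $cU_ig'(U_i)$, resp.\ $dU_ig'(U_i)$) with $(G_i^{-1})'(U_i)=-iU_i^{*}$ yields
\[
V\T_1=\begin{pmatrix}A_1&-ic&0\\0&A_1&0\\0&0&A_1\end{pmatrix}V,\qquad
V\T_2=\begin{pmatrix}A_2&0&-id\\0&A_2&0\\0&0&A_2\end{pmatrix}V,
\]
where $A_i=G_i^{-1}(U_i)$; since $U_i$ is normal and $\sigma(A_i)=G_i^{-1}(S_i)=[a_i,b_i]\subset\mathbb R$, each $A_i$ is self-adjoint, so $\J=(\J_1,\J_2)$ has the asserted form. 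For the converse, if $\T$ lifts to such a $\J$, apply the entire function $z\mapsto e^{iz}$ (using its polynomial approximation): since $(e^{iz})'=ie^{iz}$, the block formula gives that $e^{i\J_i}$ is precisely of the form \eqref{eq:Jforms} with unitary entries $e^{iA_i}$, so $J:=(e^{i\J_1},e^{i\J_2})\in\mathfrak{J}_{c,d}$; and $V\T_i=\J_iV$ forces $Ve^{i\T_i}=e^{i\J_i}V$, i.e.\ $T=(e^{i\T_1},e^{i\T_2})$ lifts to $J$, so Theorem \ref{liftthm} shows $\hat{Q}_T=\hat{Q}_\T$ factors. Finally, undoing the $t_i$-rescaling transports the lifting of $\T$ back to the original tuple, completing the cycle.

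The main technical point — and essentially the only place care is required — is the branch issue: for $G_i^{-1}$ to be a bona fide single-valued holomorphic function near $\sigma(J_i)$ one needs $\sigma(J_i)$ to be a proper arc of $\mathbb T$ missing some ray from the origin. This is exactly what the length-$<2\pi$ reduction secures for $\sigma(T_i)$ and what Proposition \ref{equalspec} then propagates to $J$; without it one would only know $\sigma(J_i)\subseteq\mathbb T$, which could be all of $\mathbb T$ and would obstruct the logarithm. Everything else — simultaneity of the rescalings, preservation of commutativity and of $3$-symmetry under scaling, and the block computations — is routine functional-calculus bookkeeping already illustrated in the preceding pages.
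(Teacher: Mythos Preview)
Your proposal is correct and follows essentially the same route as the paper: rescale so that each $\sigma(\T_i)$ sits in an interval of length less than $2\pi$, exponentiate to the $3$-isometric tuple $T=(e^{i\T_1},e^{i\T_2})$, invoke Theorem \ref{liftthm} together with Proposition \ref{equalspec} to control the spectrum of the lifting $J$, and then apply a single-valued branch of $-i\log$ via the holomorphic functional calculus to pull the lift back to $\T$; the converse by exponentiating $\J$ is likewise what the paper does. The paper's written proof of the theorem itself only supplies the one ingredient not already contained in the preceding discussion of Section \ref{sec:3sym}, namely a direct verification (via the invariance of $B_{2,0}$ and $B_{0,2}$ under conjugation by $e^{it_1\T_1}e^{it_2\T_2}$) that $T\in\mathfrak{F}_{c,d}$; you instead obtain this immediately from $\hat{Q}_T=\hat{Q}_\T$ together with either the positivity hypothesis or the assumed factorization, which is equally valid.
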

\begin{proof} By the arguments in this section, we need only prove one statement, that with $T_1=\exp(i\T_1)$ and $T_2=\exp(i\T_2)$ that $T=(T_1,T_2)\in~\mathfrak{F}_{c,d}$ for some $c,d>0$. However, this is rather simple. For $(s_1,s_2)\in \mathbb{R}^2$, let  
\[Q(s_1,s_2):=I+\sum_{0<j+k\leq 2} s_1^js_2^k B_{j,k}=\exp(is_2\T_2)^*\exp(is_1\T_1)^*\exp(is_1\T_1)\exp(is_2\T_2).\]
By definition,  
\[\exp(it_2\T_2)^*\exp(it_1\T_1)^*Q(s_1,s_2)\exp(it_1\T_1)\exp(it_2\T_2)=Q(s_1+t_1, s_2+t_2).\]
Hence by term comparison
\[\exp(it_2\T_2)^*\exp(it_1\T_1)^*B_{0,2}\exp(it_1\T_1)\exp(it_2\T_2)=B_{0,2}\] 
and 
\[\exp(it_2\T_2)^*\exp(it_1\T_1)^*B_{2,0}\exp(it_1\T_1)\exp(it_2\T_2)=B_{2,0}.\] 
If $c$ and $d$ are large enough such that 
\[I-\frac{1}{c^2}B_{0,2}-\frac{1}{d}B_{2,0}\succeq 0,\]
then
\[\exp(it_2\T_2)^*\exp(it_1\T_1)^*(I-\frac{1}{c^2}B_{0,2}-\frac{1}{d}B_{2,0})\exp(it_1\T_1)\exp(it_2\T_2)\succeq 0.\]
The existence of such $c$ and $d$ is easy enough to show, and thus
$T=(e^{i\T_1},e^{i\T_2})=(T_1,T_2)\in \mathfrak{F}_{c,d}$.
\end{proof}
In the context of Helton and Ball's conjecture \ref{3symconjecture} we have established a necessary and sufficient condition in the case $\{T_n\}$ has cardinality two. Hence, any attempt to solve this conjecture will be met with our factoring condition. 
\section{Acknowledgements}
The author would like to thank Scott McCullough whose guidance vastly improved the content and clarity of this article. 
\bibliographystyle{alpha}
\bibliography{factoringandlifts04aug2015.bbl}
\end{document}